\documentclass[11pt,a4paper,reqno]{amsart}

\usepackage{iftex}

\ifPDFTeX
  \usepackage[T1]{fontenc}
  \usepackage[utf8]{inputenc}
  \usepackage{lmodern}
\else
  \usepackage{fontspec}
  \usepackage{lmodern}
  \defaultfontfeatures{Ligatures=TeX}
\fi

\usepackage[english]{babel}
\usepackage{csquotes}
\usepackage{microtype}

\usepackage[
  a4paper,
  top=1.4in,
  bottom=1.8in,
  left=1.0in,
  right=1.0in
]{geometry}
\usepackage{setspace}
\usepackage{ragged2e}

\usepackage{amsmath,amsthm,amssymb}
\usepackage{mathtools}
\usepackage{bm}
\usepackage{mathrsfs}
\usepackage{esint}
\usepackage{centernot}
\usepackage{cancel}
\usepackage{tensor}

\usepackage{graphicx}
\usepackage{float}
\usepackage{placeins}
\usepackage[font=small,labelfont=bf]{caption}
\usepackage{subcaption}
\usepackage{wrapfig}
\usepackage{rotating}
\usepackage{adjustbox}
\usepackage{xcolor}

\usepackage{tikz}
\usetikzlibrary{
  arrows.meta,
  calc,
  decorations.pathreplacing,
  matrix,
  positioning
}
\usepackage{tikz-cd}

\usepackage{booktabs}
\usepackage{array}
\usepackage{tabularx}
\usepackage{multirow}
\usepackage{longtable}
\usepackage{makecell}
\usepackage{threeparttable}

\usepackage{enumitem}
\usepackage{quoting}
\usepackage{comment}
\usepackage{verbatim}
\usepackage{fancyvrb}
\usepackage{listings}
\usepackage{etoolbox}
\usepackage{xparse}
\usepackage{calc}

\usepackage[
  backend=biber,
  style=numeric,
  sorting=nyt,
  sortcites=true,
  giveninits=true,
  maxbibnames=99,
  url=false,
  doi=false,
  isbn=false,
  backref=false
]{biblatex}
\DeclareBibliographyCategory{bibliographyentriesused}
\newtoggle{bibliographycheckactive}
\newtoggle{bibliographycheckallincluded}

\AtEveryCitekey{%
  \addtocategory{bibliographyentriesused}{\thefield{entrykey}}%
}

\newcommand{\markbibliographyentryused}[1]{%
  \addtocategory{bibliographyentriesused}{#1}%
}
\let\originalnociteforbibliographycheck\nocite
\RenewDocumentCommand{\nocite}{m}{%
  \ifstrequal{#1}{*}
    {\global\toggletrue{bibliographycheckallincluded}}
    {\forcsvlist{\markbibliographyentryused}{#1}}%
  \originalnociteforbibliographycheck{#1}%
}

\AtEveryBibitem{%
  \iftoggle{bibliographycheckactive}{%
    \ifkeyword{bibcheck-ignore}
      {}
      {\PackageWarningNoLine{bibliography-check}{%
         Uncited bibliography entry `\thefield{entrykey}'%
       }}%
  }{}%
}

\makeatletter
\AtEndDocument{%
  \iftoggle{bibliographycheckallincluded}
    {}
    {%
      \ifcsname norefnames\endcsname\norefnames\fi
      \ifcsname setoffmsgs\endcsname\setoffmsgs\fi
      \begin{refsection}
        \originalnociteforbibliographycheck{*}%
        \begingroup
          \toggletrue{bibliographycheckactive}%
          \let\blx@warn@bibempty\relax
          \setbox0=\vbox{%
            \hsize=\textwidth
            \printbibliography[
              notcategory=bibliographyentriesused,
              heading=none
            ]%
          }%
        \endgroup
      \end{refsection}
    }%
}
\makeatother

\usepackage[
  unicode=true,
  colorlinks=true,
  linkcolor=blue,
  citecolor=blue,
  urlcolor=black
]{hyperref}
\usepackage{bookmark}
\usepackage[
  nameinlink,
  noabbrev,
  capitalise
]{cleveref}

\makeatletter
\AtBeginDocument{%
  \let\refcheckoriginalcref\cref
  \RenewDocumentCommand{\cref}{s m}{%
    \forcsvlist{\wrtusdrf}{#2}%
    \IfBooleanTF{#1}
      {\refcheckoriginalcref*{#2}}%
      {\refcheckoriginalcref{#2}}%
  }%
  \let\refcheckoriginalCref\Cref
  \RenewDocumentCommand{\Cref}{s m}{%
    \forcsvlist{\wrtusdrf}{#2}%
    \IfBooleanTF{#1}
      {\refcheckoriginalCref*{#2}}%
      {\refcheckoriginalCref{#2}}%
  }%
  \let\refcheckoriginalcrefrange\crefrange
  \RenewDocumentCommand{\crefrange}{s m m}{%
    \wrtusdrf{#2}\wrtusdrf{#3}%
    \IfBooleanTF{#1}
      {\refcheckoriginalcrefrange*{#2}{#3}}%
      {\refcheckoriginalcrefrange{#2}{#3}}%
  }%
  \let\refcheckoriginalCrefrange\Crefrange
  \RenewDocumentCommand{\Crefrange}{s m m}{%
    \wrtusdrf{#2}\wrtusdrf{#3}%
    \IfBooleanTF{#1}
      {\refcheckoriginalCrefrange*{#2}{#3}}%
      {\refcheckoriginalCrefrange{#2}{#3}}%
  }%
}
\makeatother

\theoremstyle{plain}
\newtheorem{theorem}{Theorem}[section]
\newtheorem{lemma}[theorem]{Lemma}
\newtheorem{proposition}[theorem]{Proposition}
\newtheorem{corollary}[theorem]{Corollary}

\theoremstyle{definition}
\newtheorem{definition}[theorem]{Definition}

\theoremstyle{remark}
\newtheorem{remark}[theorem]{Remark}

\crefname{theorem}{theorem}{theorems}
\crefname{lemma}{lemma}{lemmas}
\crefname{proposition}{proposition}{propositions}
\crefname{corollary}{corollary}{corollaries}
\crefname{conjecture}{conjecture}{conjectures}
\crefname{definition}{definition}{definitions}
\crefname{assumption}{assumption}{assumptions}
\crefname{condition}{condition}{conditions}
\crefname{example}{example}{examples}
\crefname{problem}{problem}{problems}
\crefname{question}{question}{questions}
\crefname{remark}{remark}{remarks}
\crefname{notation}{notation}{notations}
\crefname{observation}{observation}{observations}

\numberwithin{equation}{section}
\allowdisplaybreaks[3]
\patchcmd{\abstract}{\scshape\abstractname}{\bfseries\abstractname}{}{}

\begin{document}

\title[Curvature Estimates]{Global Curvature Estimates for \\ 
\(\sigma_k\) Curvature Equations with \(k\geq n/2\)}

 \author{Jin Yan}
\address{INSTITUTE OF MATHEMATICS, ACADEMY OF MATHEMATICSAND SYSTEMS SCIENCE, CHINESE ACADEMY OF SCIENCES, BEIJING, 100190, CHINA}
\email{yanjin@amss.ac.cn}

\subjclass[2020]{Primary 35J60; Secondary 35B45, 53C42}
\keywords{fully nonlinear elliptic equations,
\(\sigma_k\)-curvature equations, global curvature estimates,
Hessian equations, concavity inequalities, G{\aa}rding roots}

\begin{abstract}
We establish a new concavity inequality for the elementary symmetric
function \(\sigma_k\), which controls the spectral quadratic form
arising from the second variation of
\(\log\lambda_{\max}\). The proof is based on an easy--hard
decomposition of the spectral variables. The easy region is treated
using an optimal constrained concavity estimate for \(\sigma_k\),
whereas the hard region is analyzed through G{\aa}rding-root
coordinates and the concavity of the ordered partial sums of the inverse roots.  As an application, for \(n/2\leq k<n\), we obtain global curvature estimates for closed star-shaped \(k\)-convex hypersurfaces satisfying \(\sigma_k(\kappa)=f(X,\nu)\) with a general positive right-hand side, together with the corresponding global-to-boundary estimates for Euclidean Hessian equations.
\end{abstract}

\maketitle
\tableofcontents

\section{Introduction}
Let
\(M\subset\mathbb R^{n+1}\) be a smooth closed hypersurface, let
\(\nu\) be its unit outer normal, and let
\(\kappa=(\kappa_1,\ldots,\kappa_n)\) denote its principal curvature
vector. We consider
\begin{eqnarray}\label{eq:intro-prescribed-curvature}
    \sigma_k(\kappa(X))
    =
    f(X,\nu(X))>0,
    \qquad X\in M,
\end{eqnarray}
where \(\sigma_k\) is the \(k\)-th elementary symmetric function. The natural ellipticity condition for
\eqref{eq:intro-prescribed-curvature} is
\begin{eqnarray*}
    \kappa(X)\in\Gamma_k,
    \qquad
    \Gamma_k
    :=
    \left\{
        \lambda\in\mathbb R^n:
        \sigma_j(\lambda)>0,\quad 1\leq j\leq k
    \right\}.
\end{eqnarray*}
Throughout this paper, a hypersurface satisfying this condition is
called \(k\)-convex.

Many classical geometric problems arise from
\eqref{eq:intro-prescribed-curvature} by prescribing data with a
particular dependence on the geometric variables. When
\(f=f(\nu)\), the most famous example is the Minkowski problem, which
prescribes the Gauss curvature as a function of the unit outer normal,
or equivalently the surface area measure in Gauss-map coordinates.
Its existence and regularity theory was developed in the classical
works of Minkowski \cite{Minkowski1903}, Nirenberg
\cite{Nirenberg1953}, Cheng--Yau \cite{ChengYau1976}, and Pogorelov
\cite{Pogorelov1978}. More general inverse-Gauss-map prescriptions of
Weingarten curvature were studied by Guan--Guan
\cite{GuanGuan2002} and Sheng--Trudinger--Wang
\cite{ShengTrudingerWang2004}. Closely related are the
Christoffel--Minkowski problems, where elementary symmetric
functions of the principal curvature radii are prescribed on the
sphere. Guan--Ma \cite{GuanMa2003} developed the spherical-Hessian
and full-rank framework for these problems, and their
Weingarten-curvature and admissible-solution aspects were further
investigated by Guan--Lin--Ma \cite{GuanLinMa2006} and
Guan--Ma--Zhou \cite{GuanMaZhou2006}. When \(f=f(X)\), prescribed mean-curvature equations for star-shaped
hypersurfaces were studied by Bakelman--Kantor
\cite{BakelmanKantor1974} and Treibergs--Wei
\cite{TreibergsWei1983}, while the prescribed Gauss-curvature problem
was treated by Oliker \cite{Oliker1984}. The
corresponding fully nonlinear Weingarten equations were treated by
Caffarelli--Nirenberg--Spruck \cite{CNS1986}, with related extensions
to space forms due to Li--Oliker \cite{LiOliker2002} and
Barbosa--Lira--Oliker \cite{BarbosaLiraOliker2002}.

The Alexandrov and curvature measure problems have a different
geometric form: the prescribed datum is assigned through the radial
map rather than the Gauss map. The case
corresponding to the classical Alexandrov problem was studied by
Alexandrov \cite{Alexandrov1942}, while the intermediate
curvature-measure problems were treated by Guan--Lin--Ma
\cite{GuanLinMa2009} and Guan--Li--Li \cite{GuanLiLi2012}. In the
smooth setting, the equation for the prescribed \((n-k)\)-th
curvature measure takes the form
\begin{eqnarray*}
    \sigma_k(\kappa(X))
    =
    \frac{\langle X,\nu\rangle}{|X|^{n+1}}
    \varphi\left(\frac{X}{|X|}\right).
\end{eqnarray*}

Taken together, these works of Pengfei Guan and his collaborators
connect the classical Gauss-map and Christoffel--Minkowski
formulations with curvature equations involving both the position and
normal variables. In the curvature-measure equation above, however,
the dependence on \(X\) and \(\nu\) is constrained by its geometric
origin. After establishing
the required curvature estimate for this specially structured
equation, Guan--Li--Li \cite[Remark~3.5]{GuanLiLi2012} asked whether
the same type of estimate remains valid for a general positive
function \(f(X,\nu)\). More precisely, they asked whether an a priori
\(C^1\) bound for a smooth compact \(k\)-admissible hypersurface
satisfying
\begin{eqnarray*}
    \sigma_k(\kappa(X))
    =
    f(X,\nu(X)),
    \qquad 1<k<n,
\end{eqnarray*}
implies a global \(C^2\) estimate controlled only by the \(C^1\)
geometry of the hypersurface and the prescribed data. In the same
remark, they posed the analogous global-to-boundary problem for
admissible solutions of
\begin{eqnarray*}
    \sigma_k(D^2u)
    =
    f(x,u,Du).
\end{eqnarray*}
These questions are the direct point of departure for the present
paper.

The dependence on the normal variable is the main source of
difficulty. Indeed, writing \(F=\sigma_k\) and
\(F^{ii}=\partial F/\partial\kappa_i\), at a point where the second
fundamental form is diagonal, the first derivative of
\eqref{eq:intro-prescribed-curvature} is
\begin{eqnarray*}
    \sum_{i=1}^nF^{ii}h_{ii;l}
    =
    d_Xf(e_l)+\kappa_l\,d_\nu f(e_l).
\end{eqnarray*}
When \(f\) is independent of \(\nu\), the foundational estimates of
Caffarelli, Nirenberg, and Spruck
\cite{CNS1984,CNS1985,CNS1986,CNS1988} and their subsequent
extensions apply to a broad class of curvature equations. Related second-order estimates were obtained by Ivochkina
\cite{Ivochkina1990,Ivochkina1991}, Trudinger
\cite{Trudinger1990}, Bo Guan \cite{Guan1999,Guan2014}, and
Sheng--Urbas--Wang \cite{ShengUrbasWang2004}. Extensions to
warped-product settings were obtained by Chen--Li--Wang
\cite{ChenLiWang2018}, while related Hessian-type equations were
considered by Li--Ren--Wang \cite{LiRenWang2019}. For a
general \(f(X,\nu)\), however, a second differentiation introduces
terms in which derivatives of the second fundamental form interact
with the normal derivatives of \(f\). At a maximum point of the
largest-principal-curvature test function, the resulting third-order
terms are not controlled by the standard concavity of
\(\sigma_k^{1/k}\) alone. A geometrically structured example of such normal dependence arises
from the Gauss equation for immersed hypersurfaces in Riemannian
manifolds. In this setting, Guan--Lu \cite{GuanLu2017} obtained
curvature estimates under the assumption of nonnegative extrinsic
scalar curvature and applied them to Weyl-type isometric embedding
problems.

One way to overcome this difficulty is to impose additional
convexity on $\kappa$. Guan--Ren--Wang \cite{GuanRenWang2015} obtained global estimates for convex solutions with general \(f(X,\nu)\). Estimates for
semiconvex hypersurfaces in hyperbolic space were later obtained by Lu
\cite{Lu2023Semiconvex}. These assumptions can control the third-order terms generated by general $f(X,\nu)$, but cannot derive the existence result directly.

Without such additional convexity, Guan--Ren--Wang
\cite{GuanRenWang2015} also derived the result when \(k=2\);
see also Spruck--Xiao \cite{SpruckXiao2017} for a new proof. For \(k\geq3\), however, the problem becomes more
involved. Ren--Wang \cite{RenWang2019} established the global curvature
estimate for \(k=n-1\). Later, in \cite{RenWang2023}, they
showed that throughout the supercritical range \(k>n/2\), the same
problem can be reduced to a quadratic-form inequality in the spectral
variables, and verified that inequality for \(k=n-2\).
Very recently, Lu--Tsai \cite{LuTsai2026} gave a new proof for $k=n-1$ based on the previously established semiconvexity result.

A key feature of the Ren--Wang argument is the use of an auxiliary
function with the symmetric leading term
\begin{eqnarray*}
    \log\log P,\qquad P=\sum_{i=1}^n e^{\kappa_i}.
\end{eqnarray*}
Such an auxiliary function appears to provide stronger convexity, but it also makes the quadratic form more complicated. Numerical experiments suggest that their conjectured inequality is valid, but a proof is not yet available. In the present paper, we instead work directly with the simpler leading term
\begin{eqnarray*}
    \log\kappa_{\max},
\end{eqnarray*}
which makes the quadratic form $\mathcal{Q}_{\gamma}$ cleaner.

Let
\(\lambda\in\Gamma_k\), with
\(a:=\lambda_1>\lambda_2\geq\cdots\geq\lambda_n\), and set
\begin{eqnarray*}
    F:=\sigma_k(\lambda),
    \qquad
    F^{ii}:=\sigma_{k-1}(\lambda|i),\qquad F^{ii,jj}
    :=
    \begin{cases}
        \sigma_{k-2}(\lambda|ij),& i\neq j,\\
        0,& i=j.
    \end{cases}
\end{eqnarray*}
For
\(\xi=(\xi_1,\ldots,\xi_n)\in\mathbb R^n\), define
\(\mathcal Q_\gamma(\lambda;\xi)\) by
\begin{eqnarray}\label{Qgamma}
    \mathcal Q_{\gamma}(\lambda;\xi)
    :=
    \frac{2}{aF}
    \left(\sum_{i=1}^nF^{ii}\xi_i\right)^2
    -\gamma\frac{F^{11}}{a^2}\xi_1^2-\frac1a\sum_{i,j=1}^nF^{ii,jj}\xi_i\xi_j
    +\frac2a\sum_{p=2}^n
    \frac{F^{pp}}{a-\lambda_p}\,\xi_p^2.
\end{eqnarray}

Our crucial concavity inequality is:
\begin{theorem}\label{thm-crucial-ineq}
Let \(n\geq3\), \(2\leq k<n\), and
\(\lambda\in\Gamma_k\), with
\(a=\lambda_1>\lambda_2\geq\cdots\geq\lambda_n\). Assume that 
$$
0<\gamma<\min\{\frac{2k}{n}, 1+\frac{2k-n}{2k^2+n}\}.
$$
Then there exists a small constant $ \eta_*=
    \eta_*(n,k,\gamma)>0$ such that whenever
\[
   0<\frac{F}{a^k}\leq\eta_*,
\]
we have
\begin{eqnarray}\label{Qbetagamma>0}
    \mathcal Q_{\gamma}(\lambda;\xi)\ge0,
    \qquad
    \forall\xi\in\mathbb R^n.
\end{eqnarray}
\end{theorem}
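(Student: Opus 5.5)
Multiplying \eqref{Qgamma} by \(aF>0\), the claim becomes nonnegativity of the quadratic form
\[
2\Big(\sum_i F^{ii}\xi_i\Big)^2-\gamma\frac{FF^{11}}{a}\xi_1^2-F\sum_{i\neq j}F^{ii,jj}\xi_i\xi_j+2F\sum_{p\ge2}\frac{F^{pp}}{a-\lambda_p}\xi_p^2 .
\]
I will use two structural identities. The first is the standard concavity identity for \(\sigma_k\):
\[
\Big(\sum_i F^{ii}\xi_i\Big)^2-F\sum_{i\neq j}F^{ii,jj}\xi_i\xi_j=\sum_i (F^{ii})^2\xi_i^2-\sum_{i\neq j}\big(F\sigma_{k-2}(\lambda|ij)-F^{ii}F^{jj}\big)\xi_i\xi_j .
\]
The second is \(F^{ii}F^{jj}-F\sigma_{k-2}(\lambda|ij)=\sigma_{k-1}(\lambda|ij)^2-\sigma_k(\lambda|ij)\sigma_{k-2}(\lambda|ij)\ge0\), together with \(F^{jj}-F^{ii}=(\lambda_i-\lambda_j)\sigma_{k-2}(\lambda|ij)\). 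These allow me to treat \(\xi_1\) separately from \((\xi_2,\dots,\xi_n)\). Minimizing in \(\xi_1\) first gives a Schur-complement condition. Writing \(S=\sum_{p\ge2}F^{pp}\xi_p\), I expect the \(\xi_1\)-part to be
\[
\Big(2(F^{11})^2-\gamma\frac{FF^{11}}{a}\Big)\xi_1^2+2\xi_1\Big(2F^{11}S-F\sum_{p\ge2}F^{11,pp}\xi_p\Big).
\]
The coefficient of \(\xi_1^2\) is positive as soon as \(aF^{11}>\frac{\gamma}{2}F\). This follows from \(\lambda_1F^{11}\ge\frac{k}{n}F\) (the largest \(\lambda\) carries at least the average share of \(F\)) and the hypothesis \(\gamma<2k/n\). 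This is exactly where the bound \(\gamma<2k/n\) enters.

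For the remaining form in \(\xi'=(\xi_2,\dots,\xi_n)\), I split into an easy and a hard region according to the size of \(\lambda_2\) relative to \(a\).

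\textbf{Easy region.} Suppose \(\lambda_2\le(1-\delta)a\) for some \(\delta=\delta(n,k,\gamma)\). Then \(a-\lambda_p\le Ca\) and the good term \(2F\sum_{p\ge2}F^{pp}\xi_p^2/(a-\lambda_p)\) is comparable to \(\frac{F}{a}\sum_{p\ge2}F^{pp}\xi_p^2\). The smallness \(F/a^k\le\eta_*\) forces \(\lambda\) close to \(\partial\Gamma_k\) after normalizing \(a=1\). In this regime I use a constrained concavity estimate of the form
\[
\sum_{i\ne j}\big(F^{ii}F^{jj}-F\sigma_{k-2}(\lambda|ij)\big)\xi_i\xi_j\ \text{(signed)}\le(1+\epsilon)\Big(\sum_{i\geq 2} F^{ii}\xi_i\Big)^2+\ldots,
\]
of Guan--Ren--Wang / Li--Ren--Wang type, namely \(-\sum_{p\ne q\ge2}F^{pp,qq}\xi_p\xi_q+\frac{(\sum F^{pp}\xi_p)^2}{F}\ge -C\frac{1}{a}\sum_{p\geq 2} F^{pp}\xi_p^2\)-type bounds. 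The optimal constant must be tracked, and the good term absorbs the remainder. I then complete the square against the \(\xi_1\) cross term using the Cauchy--Schwarz split determined by the coefficient computed above. The constant \(1+\frac{2k-n}{2k^2+n}\) should emerge from optimizing this split when \(k\ge n/2\).

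\textbf{Hard region.} Suppose \(\lambda_2\) is close to \(a\). Here \(1/(a-\lambda_2)\) is large, which helps the \(\xi_2\) direction, but several eigenvalues may cluster near \(a\) while \(F/a^k\) is small. I pass to Gårding-root coordinates. The polynomial \(t\mapsto\sigma_k(\lambda+t\mathbf 1)\) is hyperbolic, so write \(\sigma_k(\lambda)\) via its real roots \(-\mu_1,\dots,-\mu_k\). Then \(F^{ii}\) and \(\sigma_{k-2}(\lambda|ij)\) are expressed through the \(\mu_j\), and the quadratic form becomes a form in variables weighted by \(1/\mu_j\). The needed positivity should reduce to concavity of the ordered partial sums \(\lambda\mapsto\sum_{j\le m}1/\mu_{(j)}\), which I would prove via hyperbolic-polynomial (Gårding) theory, as in the abstract's description. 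Smallness of \(F/a^k\) makes the smallest root small; the dominant term \(2(F^{11})^2\xi_1^2\) then wins, and \(k\ge n/2\) ensures the clustered block has enough \(F^{pp}\)-mass.

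\textbf{Main obstacle.} I expect the hard region to be the main obstacle. The difficulty is uniformity as \(\lambda_2\to a\) together with \(F/a^k\to0\). The fixed-\(\delta\) estimate from the easy region must be glued continuously to the root-coordinate estimate, and this has to be done with constants depending only on \(n,k,\gamma\). I would first try a compactness argument on the normalized set \(\{a=1,\ F\le\eta\}\): establish strict positivity on its boundary stratum \(\partial\Gamma_k\) via the root coordinates, then extend to small \(\eta\) by continuity.
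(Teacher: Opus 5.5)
Your proposal has a genuine gap. The parts that carry the weight of the theorem are either unspecified or false as stated.

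\textbf{The split by $\lambda_2/a$ misses the hard regime.} The dangerous regime is $F^{11}/F\to\infty$, and it occurs with $\lambda_2$ bounded away from $a$. For $n=3$, $k=2$, $\lambda=(1,\tfrac12,\tfrac{2\epsilon-1}{3})\in\Gamma_2$ gives $F=\epsilon$ and $F^{11}=\tfrac16+\tfrac{2\epsilon}{3}$, so this lies inside your easy region. There no constrained concavity estimate for $\sigma_k$ can help, even a sharp one. Proposition~\ref{prop:exact-concavity} is an exact maximum, so with $a=1$ there are $\xi$ for which the first three terms of $\mathcal Q_\gamma$ equal $F\xi_1^2\bigl[\mathfrak c_k(\Theta_k)-\gamma F^{11}/F\bigr]$. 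This is very negative, since $\mathfrak c_k(\Theta_k)\le\mathfrak c_k(1)$ is bounded while $F^{11}/F\to\infty$.

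Hence $\frac2a\sum_{p\ge2}F^{pp}\xi_p^2/(a-\lambda_p)$ is not a remainder to be absorbed: it must beat a leading-order negative term. This is exactly where the paper uses $\gamma<2k/n$: on the face $\tau=0$, i.e.\ $P/F\to\infty$, where $\underline{\mathcal P}(R,0)$ has the sign of $2k-n\gamma$. Your diagonal check $2aF^{11}>\gamma F$ is correct (via $\lambda_1F^{11}\ge\frac knF$). But it only gives positivity of one diagonal entry, which is enormous in that regime anyway, so it is not where $2k/n$ is decisive.

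\textbf{The constant does not come from a Cauchy--Schwarz split.} In the paper, $1+\frac{2k-n}{2k^2+n}$ comes from cutting at $F^{11}/F=1+\frac n{2k^2}$ and matching two requirements. One is the easy-region condition $\gamma(1+\frac n{2k^2})<\mathfrak c_k((k-1)^{-2})=\frac{k+1}{k}$, which needs the sharp asymptotics $\mathcal I_k\le\frac{k-2}{k-1}+o(1)$ of Proposition~\ref{prop:boundary}. The other is that $W_\gamma(\tau)$ and $\underline{\mathcal P}(\infty,\tau)$ stay positive up to $\tau=\frac{2k^2(n-k)}{n}$. Your argument also leans on $k\ge n/2$, whereas the statement covers all $2\le k<n$.

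\textbf{The proposed concavity in the hard region is false.} Let $-\mu_1,\dots,-\mu_k$ be the roots of $t\mapsto\sigma_k(\lambda+t\mathbf 1)$. Along $\lambda=s\mathbf 1$ every $\mu_j=s$, so $\sum_{j\le m}1/\mu_{(j)}=m/s$ is strictly convex in $\lambda$.

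The paper's concavity lives in other variables. After normalizing $a=1$, it writes the level set as a graph $\lambda_1=\psi(x,z)$ over the level and the tail coordinates $z_i=1/(1-\lambda_i)$. It then takes G{\aa}rding roots of the tail polynomial $\sigma_k(t\mathbf 1-z^{-1})$ on $\mathbb R^{n-1}$, whose ordered inverse-root partial sums are concave in $z$, not in $\lambda$. This change of variables is what absorbs the good term: it is exactly the second $t$-derivative of $1-1/(z_p+tw_p)$. It yields
$\mathcal Q_\gamma/P=D^2\phi[X,X]-(\gamma-1)(D\phi[X])^2+\frac FP\dot x^2$.
The chain rule then splits this into two pieces:
\begin{itemize}
\item an explicit resolvent-variable form, made positive via $W_\gamma(\tau)$ and the pivot bound $\underline{\mathcal P}>0$ against the single negative coefficient $d_k$;
\item a remainder $\sum_l\phi_{\zeta_l}\ddot\zeta_l\ge0$, obtained by Abel summation from the concavity together with the ordering $\phi_{\zeta_{(1)}}\le\dots\le\phi_{\zeta_{(k)}}<0$.
\end{itemize}
Saying that $2(F^{11})^2\xi_1^2$ ``wins'' skips all of this. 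After eliminating $\xi_1$, the Schur complement in $(\xi_2,\dots,\xi_n)$ is the whole problem.

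\textbf{The compactness fallback fails as stated.} After multiplying by $aF$, the form at $F=0$ (with $\lambda_2<a$) collapses to $2(\sum_iF^{ii}\xi_i)^2$, which has an $(n-1)$-dimensional kernel. So there is no strict positivity on $\partial\Gamma_k$ to propagate, and the sign for small $F$ is decided at the next order on that kernel. This is why the paper needs rescaled limits (Lemma~\ref{lem:penalty}, Proposition~\ref{prop:boundary}) and an explicit compactified analysis of $\underline{\mathcal P}$.
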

As Corollary~\ref{cor:jacobi} shows, when \(k>n/2\), our concavity
inequality is almost equivalent to the Jacobi inequality for
\(b=\log\lambda_{\max}(D^2u)\). Since Jacobi inequalities play a
central role in interior \(C^2\) estimates, we briefly recall the
relevant results for the \(k\)-Hessian equation
\begin{eqnarray}\label{eq:intro-k-hessian}
    \sigma_k(D^2u)=f.
\end{eqnarray}

Heinz \cite{Heinz1959} first established the interior \(C^2\) estimate
when \(n=k=2\), namely, for the two-dimensional Monge--Amp\`ere
equation. Pogorelov \cite{Pogorelov1978} constructed singular solutions
to the Monge--Amp\`ere equation in dimensions \(n\geq3\), and Urbas
\cite{Urbas1990} extended this construction to the
\(\sigma_k\)-equations for \(n\geq k\geq3\). These examples show that
interior \(C^2\) estimates cannot hold in general for \(k\geq3\)
without additional structural assumptions, leaving the
\(2\)-Hessian equation as the only unresolved case.

A major breakthrough was made by Warren--Yuan
\cite{WarrenYuan2009}, who used the special Lagrangian structure to
establish the interior \(C^2\) estimate for
\(\sigma_2(D^2u)=1\) in dimension three. It is worth noting that they
first introduced a Jacobi
inequality for
\(b_1=\log\sqrt{1+\lambda_1^2}\). For general positive
right-hand sides in dimension three, Qiu \cite{Qiu2024Hessian}
first derived a trace Jacobi inequality and introduced a
doubling argument, which led to the result. More recently, Shankar--Yuan \cite{ShankarYuan2025} proved the
constant-right-hand-side case in dimension four by combining Qiu's
doubling argument with ideas from Chaudhuri--Trudinger
\cite{ChaudhuriTrudinger2005} and Savin's small-perturbation theorem
\cite{Savin2007}. Fan \cite{Fan2026} subsequently extended the
dimension-four result to positive \(C^{1,1}\) variable right-hand
sides by following this approach and using a generalization of Savin's
theorem. For \(n\geq5\), the problem without additional assumptions remains
open. Guan--Qiu \cite{GuanQiu2019} obtained estimates under the
additional condition
\(\sigma_3(D^2u)\geq-A\), while
Shankar--Yuan \cite{ShankarYuan2020} treated semiconvex solutions of
\(\sigma_2(D^2u)=1\). For related interior estimates for curvature
equations and other geometric fully nonlinear equations, we refer to
Guan--Qiu \cite{GuanQiu2019}, Qiu \cite{Qiu2024Curvature},
Qiu--Zhou \cite{QiuZhou2024}, and the references therein.

\

The principal geometric application of the preceding inequality gives an affirmative answer to the curvature-estimate question of Guan--Li--Li for \(k\geq n/2\). The result is new for \(n/2\leq k\leq n-3\); for the analogous Hessian problem, see Theorem~\ref{thm:euclidean-hessian-application}.
\begin{theorem}\label{thm:curvature-application}
Let \(n\geq3\) and \(n/2\leq k<n\). Suppose that
\(M\subset\mathbb R^{n+1}\) is a closed smooth star-shaped \(k\)-convex hypersurface
satisfying
\begin{eqnarray}\label{eq:prescribed-curvature-application}
    \sigma_k(\kappa(X))
    =
    f(X,\nu(X)),
    \qquad X\in M,
\end{eqnarray}
for some positive function \(f(X,\nu)\in C^2(\Gamma)\), where
\(\Gamma\) is an open neighborhood of the unit normal bundle of \(M\)
in \(\mathbb R^{n+1}\times\mathbb S^n\). Then there exists a constant
\(C\), depending only on $n$, $k$, $\|M\|_{C^1}$, $\inf_\Gamma f$, $\|f\|_{C^2(\Gamma)}$,
such that
\begin{eqnarray}\label{eq:global-curvature-application}
    \max_{\substack{X\in M\\1\leq i\leq n}}
    \kappa_i(X)
    \leq C.
\end{eqnarray}
\end{theorem}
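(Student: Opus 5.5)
The proof runs the standard maximum-principle argument with the test function
\[
\phi=\log\kappa_{\max}-N\log u,\qquad u=\langle X,\nu\rangle ,
\]
where \(u\ge c_0>0\) by star-shapedness and the \(C^1\) bound, and \(N\) is a large constant fixed at the end. Theorem~\ref{thm-crucial-ineq} absorbs the third-order terms.

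First I reduce to a smooth setting. At a maximum point \(X_0\), if \(\kappa_1\) has multiplicity, perturb the second fundamental form in the usual way (replace \(h_{ij}\) by \(h_{ij}-\epsilon g_{ij}\) near \(\xi_1\) in a fixed frame), so that \(\kappa_1>\kappa_2\) and \(\log\kappa_1\) is smooth. Choose an orthonormal frame with \(h_{ij}=\kappa_i\delta_{ij}\) and set \(a=\kappa_1\). If \(F/a^k=f/a^k>\eta_*\), then \(a\le (\sup f/\eta_*)^{1/k}\) and we are done. So assume \(f/a^k\le\eta_*\), with \(\eta_*=\eta_*(n,k,\gamma)\) from Theorem~\ref{thm-crucial-ineq} and a fixed admissible \(\gamma\in(0,1)\).

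Next I use the critical-point and inequality conditions. The first-order condition gives
\[
\frac{h_{11;i}}{a}=N\frac{u_i}{u}=N\frac{\kappa_i\langle X,e_i\rangle}{u}.
\]
The second-order condition, after commuting derivatives via the Gauss–Codazzi equations, reads
\[
0\ge \frac{F^{ii}h_{11;ii}}{a}-\frac{F^{ii}h_{11;i}^2}{a^2}+\frac{2}{a}\sum_{p\ge2}\frac{F^{ii}h_{1p;i}^2}{a-\kappa_p}-N\frac{F^{ii}u_{ii}}{u}+N\frac{F^{ii}u_i^2}{u^2}.
\]
I differentiate the equation twice in the \(e_1\) direction:
\[
F^{ii}h_{ii;11}=-F^{pq,rs}h_{pq;1}h_{rs;1}+\nabla_{11}f .
\]
The term \(\nabla_{11}f\) contains \(d_\nu f\cdot h_{1k;1}\) and \(d^2_\nu f\,\kappa_1^2\). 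These are bounded below by \(-C a^2-C\sum_k|h_{11;k}|\), and the first-order condition turns the gradient piece into \(-CNa^2\). The \(u\)-terms contribute \(N\left(cF^{ii}\kappa_i^2-Cf-C\right)\). Here I use \(u_{ii}=-u\kappa_i^2+\kappa_i\langle X,e_i\rangle\)-type identities together with \(\sum F^{ii}\kappa_i^2\ge c\,F^{11}a^2\ge c(n,k)\,a\,f\), the latter valid since \(a\) is largest and \(k\ge2\).

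Finally I isolate the quadratic form. Write \(\xi_i=h_{ii;1}\). Using Codazzi (\(h_{1p;p}=h_{pp;1}\)), the negative concavity term \(-F^{pq,rs}h_{pq;1}h_{rs;1}\) splits into the diagonal part \(-\sum F^{ii,jj}\xi_i\xi_j\) and off-diagonal parts \(2\sum_{p} F^{11,pp}h_{1p;1}^2\), which is controlled by the positive term \(2\sum_p F^{pp}h_{1p;p}^2/(a(a-\kappa_p))\) in the standard way. The bad term \(-F^{ii}h_{11;i}^2/a^2\) with \(i\ne1\) is absorbed by the \(h_{1i;i}\)-type term \(2F^{ii}h_{11;i}^2/(a(a-\kappa_i))\) when \(\kappa_i\) is not close to \(a\), and by the \(N F^{ii}u_i^2/u^2\) term otherwise. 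The \(i=1\) piece \(-F^{11}\xi_1^2/a^2\) is split, with a fraction \(\gamma\) kept in \(\mathcal Q_\gamma\) and the remainder absorbed by \(N\)-terms via the critical equation. To supply the term \(\frac{2}{aF}(\sum F^{ii}\xi_i)^2\), I use \(\sum F^{ii}\xi_i=\nabla_1 f=O(1+a)\) and add it at a cost of \(O(a/f)\). That cost is acceptable after multiplying through, given \(f\ge\inf f\); alternatively I run the argument with \(\log\sigma_k\) to produce that term exactly.

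These steps yield
\[
0\ge \mathcal Q_\gamma(\kappa;\xi)+N c\,F^{11}a-C N\left(1+\sum F^{ii}\right)-C\frac{a}{f}.
\]
Theorem~\ref{thm-crucial-ineq} gives \(\mathcal Q_\gamma\ge0\). Since \(k\ge n/2\), one also has \(F^{11}a\ge c\sum F^{ii}\) in the regime \(f/a^k\) small. Choosing \(N\) large then forces \(a\le C\).

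\textbf{Main obstacle.} The hard step is the bookkeeping that matches the third-order terms exactly to \(\mathcal Q_\gamma\). This requires showing that the \(d_\nu f\) contributions and the leftover fraction \((1-\gamma)F^{11}\xi_1^2/a^2\) are dominated by the \(N\)-terms, which rests on the lower bound \(F^{11}a\gtrsim\sum F^{ii}\). The case \(k=n/2\) is borderline for that bound, so I would first try to verify it using \(\gamma<2k/n\).
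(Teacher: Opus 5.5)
\textbf{There is a genuine gap: the $d_\nu f$ terms are never cancelled.} You bound the third-order piece $\frac{1}{a}\sum_l h_{11;l}\,d_\nu f(e_l)$ of $\frac{1}{a}\nabla_{11}f$ crudely through the critical equation, which leaves an error of order $Na$. Your $u$-terms also omit the third-order part of $u_{ii}$. The correct identity is $u_{ii}=\sum_l h_{ii;l}\langle X,e_l\rangle+\kappa_i-u\kappa_i^2$, so $-\frac{N}{u}\sum_iF^{ii}u_{ii}$ contains
\[
-\frac{N}{u}\sum_l\langle X,e_l\rangle\sum_iF^{ii}h_{ii;l}=-\frac{N}{u}\sum_l\langle X,e_l\rangle\left(d_Xf(e_l)+\kappa_l\,d_\nu f(e_l)\right),
\]
which is another $O(Na)$ term. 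Errors of order $Na$ cannot be beaten by taking $N$ large. Every good $N$-term is $O(N\sum_iF^{ii}\kappa_i^2)$, and this can be as small as order $Na$: for $n=3$, $k=2$, $\kappa=(a,1/a,0)$ one has $\sigma_2=1$ and $\sum_iF^{ii}\kappa_i^2=a+1/a$.

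The missing idea is the exact cancellation used in the paper. At the maximum point $h_{11;l}=aN\kappa_l\langle X,e_l\rangle/u$, so
\[
\frac{1}{a}\sum_l h_{11;l}\,d_\nu f(e_l)=\frac{N}{u}\sum_l\kappa_l\langle X,e_l\rangle\,d_\nu f(e_l).
\]
This cancels the $d_\nu f$ part of the display above and leaves only $-\frac{N}{u}\sum_l d_Xf(e_l)\langle X,e_l\rangle\ge -CN$. After that every error has the form $C(1+N+a)$; the $d_\nu^2 f$ terms contribute $Ca$ with no factor $N$. Then $(N-1)\sum_iF^{ii}\kappa_i^2\ge c(N-1)a$ closes the argument for $N$ large.

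\textbf{Your closing mechanism also fails.} The inequality $F^{11}a\ge c\sum_iF^{ii}$ is false in this regime: in the same example $F^{11}a=1$, while $\sum_iF^{ii}=2(a+1/a)$. Even if it held, $NcF^{11}a-CN\sum_iF^{ii}$ is linear in $N$ on both sides, so enlarging $N$ gains nothing.

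With $\gamma\in(0,1)$ fixed independently of $N$, the leftover $(1-\gamma)F^{11}\xi_1^2/a^2$ equals $(1-\gamma)N^2F^{11}u_1^2/u^2$ by the critical equation. It cannot be absorbed by the $N$-terms once $N\gg 1/(1-\gamma)$. The paper avoids this as follows:
\begin{itemize}
\item When $k>n/2$ it takes $\gamma=1$, which Theorem~\ref{thm-crucial-ineq} allows, so there is no leftover.
\item When $k=n/2$ it fixes $N$ first and takes $\gamma=1-\varepsilon$ with $\varepsilon<1/(2N)$. Then $\eta_*$ depends on $N$, which is harmless.
\end{itemize}

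\textbf{The $h_{11;p}^2$ terms need no case analysis.} The off-diagonal concavity terms $\frac{2}{a}\sum_{p\ge2}\frac{F^{pp}-F^{11}}{a-\kappa_p}h_{11;p}^2$ already have a good sign. Together with the $i=1$ term of the second-order condition and $-\frac{1}{a^2}F^{pp}h_{11;p}^2$, they give $\frac{a+\kappa_p}{a^2(a-\kappa_p)}F^{pp}h_{11;p}^2\ge0$, by \eqref{n-k/k} and $k\ge n/2$.

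Your fallback absorption by $NF^{pp}u_p^2/u^2$ would fail by a factor $N$, since $F^{pp}h_{11;p}^2/a^2=N^2F^{pp}u_p^2/u^2$. Also, you must not spend $\frac{2}{a}\sum_p\frac{F^{pp}}{a-\kappa_p}\xi_p^2$ on these terms, because that term is needed inside $\mathcal Q_\gamma$.
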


\medskip
\noindent

We now briefly outline the proof of
Theorem~\ref{thm-crucial-ineq}. By homogeneity, we
first normalize \(a=\lambda_1=1\) and divide the spectral variables
into two regions according to the size of \(F^{11}/F\). In the easy
region, an exact constrained concavity estimate for \(\sigma_k\),
together with the smallness of \(F\), controls the unfavorable
\(\xi_1^2\)-term directly. In the hard region, we represent the
\(\sigma_k\)-level set as a graph over the remaining spectral
variables and rewrite this graph in terms of its G{\aa}rding roots.
After passing to the inverse-root variables, the second-order chain
rule decomposes \(\mathcal Q_\gamma\) into an explicit root-variable
quadratic form and a remainder. Completing the square and applying an
explicit matrix estimate give the positivity of the former when \(F\)
is sufficiently small, while the concavity of the ordered partial sums
of the inverse G{\aa}rding roots, together with summation by parts,
shows that the latter is nonnegative. Combining the two regions and
restoring the original scale proves Theorem~\ref{thm-crucial-ineq}.

\medskip
\noindent

The remainder of the paper is organized as follows. Section~\ref{notation} introduces the basic notation and states the crucial inequality together with its repeated-root version. Section~\ref{easy} treats the easy region via the optimal concavity of $\sigma_k$. Section~\ref{hard} develops the G{\aa}rding-root representation in the hard region and completes the proof of Theorem~\ref{thm-crucial-ineq}. Section~\ref{applications} discusses curvature and Hessian estimates.

\section{Preliminaries and the Crucial Concavity Inequality}\label{notation}
In this section, we collect the notation and elementary identities
used later, record two direct consequences of
Theorem~\ref{thm-crucial-ineq}, and introduce the normalization and
easy--hard decomposition used in the proof.
\begin{definition}
For $1\leq k\leq n$, the $k$-th elementary symmetric function
$\sigma_k$ is defined by
\begin{eqnarray*} 
\sigma_k(\lambda) = \sum _{1 \le i_1 < i_2 <\cdots<i_k\leq n}\lambda_{i_1}\lambda_{i_2}\cdots\lambda_{i_k},
 \qquad
\lambda=(\lambda_1,\ldots,\lambda_n)\in\mathbb{R}^n.
\end{eqnarray*}
\end{definition}
We use the convention that $\sigma_0=1$ and $\sigma_k =0$ for $k>n$. 

We write $\sigma_k(\lambda|i)$ for the elementary symmetric function
obtained by setting $\lambda_i=0$, and $\sigma_k(\lambda|ij)$ for the
one obtained by setting $\lambda_i=\lambda_j=0$.  Recall that the  G{\aa}rding cone is defined as
\begin{eqnarray*} 
\Gamma_k := \left\{ \lambda  \in \mathbb{R}^n \mid \sigma _i (\lambda ) > 0,~~\forall 1 \le i \le k \right\}.
\end{eqnarray*} 
The following properties will be used repeatedly.
\begin{proposition}
Let \(\lambda=(\lambda_1,\ldots,\lambda_n)\in\mathbb R^n\), and let
\(1\leq k\leq n\). Then
\begin{enumerate}
    \item $\sigma_k(\lambda)=\sigma_k(\lambda|i)+\lambda_i\sigma_{k-1}(\lambda|i), \quad \forall \,1\leq i\leq n$;
    \item $\sum\limits_{i=1}^n \lambda_i\sigma_{k-1}(\lambda|i)=k\sigma_{k}(\lambda)$;
    \item $\sum\limits_{i=1}^n \sigma_{k}(\lambda|i)=(n-k)\sigma_{k}(\lambda)$;
    \item If $\lambda \in \Gamma_k$ and $\lambda_1 \geq  \lambda_2 \geq \cdots \geq \lambda_n$, then
\begin{eqnarray}\label{n-k/k}
\lambda_i>-\frac{n-k}{k}\lambda_1.
\end{eqnarray}
\end{enumerate}

\end{proposition}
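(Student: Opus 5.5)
Items (1)--(3) are algebraic identities valid for every \(\lambda\in\mathbb R^n\); item (4) is the only statement that uses the cone condition.

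For (1), split the monomials of \(\sigma_k(\lambda)\) according to whether they contain \(\lambda_i\). Those without \(\lambda_i\) sum to \(\sigma_k(\lambda|i)\). Those containing \(\lambda_i\) are \(\lambda_i\) times a monomial of degree \(k-1\) in the remaining variables, so they sum to \(\lambda_i\sigma_{k-1}(\lambda|i)\).

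For (2) and (3), count monomials. Each monomial \(\lambda_{i_1}\cdots\lambda_{i_k}\) of \(\sigma_k\) appears in \(\lambda_i\sigma_{k-1}(\lambda|i)\) exactly when \(i\in\{i_1,\dots,i_k\}\). Summing over \(i\) therefore counts it \(k\) times, which gives (2). The same monomial appears in \(\sigma_k(\lambda|i)\) exactly when \(i\notin\{i_1,\dots,i_k\}\), so it is counted \(n-k\) times, which gives (3). Alternatively, (3) follows by summing (1) over \(i\) and substituting (2).

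For (4), I would use the standard fact that for \(\lambda\in\Gamma_k\) and every \(i\), the truncated vector \((\lambda|i)\) lies in \(\Gamma_{k-1}\). In particular \(\sigma_{k-1}(\lambda|i)>0\), and for \(k\ge2\) also \(\sigma_1(\lambda|i)>0\). This follows from the G{\aa}rding theory of hyperbolic polynomials: \(\Gamma_k\) is the connected component of \(\{\sigma_k>0\}\) containing \((1,\dots,1)\), and \(\partial\sigma_k/\partial\lambda_i=\sigma_{k-1}(\lambda|i)\) is positive on \(\Gamma_k\).

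The main step is to get a bound with the sharp constant \((n-k)/k\). Simply using \(\sigma_1(\lambda|n)>0\) only yields \(\lambda_n>-(n-2)\lambda_1\). For the sharp bound I would use the Newton--Maclaurin type inequality \(\sigma_{k-1}(\lambda|n)\,\sigma_1(\lambda|n)\ge c\,\sigma_k(\lambda|n)\) on \(\Gamma_{k-1}\), or argue more directly as follows. Set \(\mu=(\lambda|n)\in\mathbb R^{n-1}\). From (1), \(\sigma_k(\mu)+\lambda_n\sigma_{k-1}(\mu)>0\), so
\[
-\lambda_n<\frac{\sigma_k(\mu)}{\sigma_{k-1}(\mu)}.
\]
Now consider \(\sigma_k/\sigma_{k-1}\) as a function of \(\mu\). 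By the Newton inequalities this ratio is concave on \(\Gamma_{k-1}\), and it is bounded by the value obtained with all entries replaced by the largest one, which is \(\lambda_1\). Evaluating at \(\mu=(\lambda_1,\dots,\lambda_1)\in\mathbb R^{n-1}\) gives
\[
\frac{\binom{n-1}{k}}{\binom{n-1}{k-1}}\,\lambda_1=\frac{n-k}{k}\,\lambda_1 .
\]
To justify the comparison, I would use the monotonicity of \(\sigma_k/\sigma_{k-1}\) in each variable on \(\Gamma_{k-1}\), which is classical, together with \(\mu_j\le\lambda_1\). If \(\sigma_k(\mu)\le0\), the bound \(-\lambda_n<0\) is immediate.

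Since \(\lambda_i\ge\lambda_n\), the bound for \(\lambda_n\) gives \eqref{n-k/k} for every \(i\). The delicate point is the monotonicity step when \(\mu\) is only in \(\Gamma_{k-1}\), not in \(\Gamma_k\). Monotonicity of \(\sigma_k/\sigma_{k-1}\) along the path from \(\mu\) up to \((\lambda_1,\dots,\lambda_1)\) does hold in \(\Gamma_{k-1}\), because the derivative equals \(\bigl(\sigma_{k-1}(\mu|j)\sigma_{k-1}(\mu)-\sigma_k(\mu)\sigma_{k-2}(\mu|j)\bigr)/\sigma_{k-1}^2\), which is nonnegative by Newton-type inequalities. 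So this is the step I would check carefully, or cite as standard.
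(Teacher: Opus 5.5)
Your proposal is correct. Items (1)--(3) match the paper, which only says they follow from the definition. For (4) the paper gives no argument of its own and cites Lemma~11 of Ren--Wang, so yours is a self-contained proof in place of a citation.

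The argument goes through. With $\mu=(\lambda|n)\in\Gamma_{k-1}$, item (1) gives $-\lambda_n<\sigma_k(\mu)/\sigma_{k-1}(\mu)$. The step you flag as delicate is routine: applying (1) to numerator and denominator collapses the numerator of $\partial_j(\sigma_k/\sigma_{k-1})$ to
\[
\sigma_{k-1}(\mu|j)^2-\sigma_k(\mu|j)\,\sigma_{k-2}(\mu|j).
\]
This is nonnegative for every real vector by Newton's inequality. In unnormalized form that inequality reads $\sigma_{k-1}^2\geq c\,\sigma_k\sigma_{k-2}$ with a combinatorial constant $c>1$, and the case $\sigma_k\sigma_{k-2}<0$ is trivial. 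You should also note that the segment from $\mu$ to $\lambda_1\mathbf 1$ stays in $\Gamma_{k-1}$, which follows from convexity of the cone and $\lambda_1>0$. The concavity of $\sigma_k/\sigma_{k-1}$ you mention plays no role. What your route buys over the citation is that it uses only (1), $(\lambda|i)\in\Gamma_{k-1}$, convexity of the cone, and Newton's inequality.

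You can shorten it further by applying the same monotonicity idea to $\sigma_k$ itself on $\Gamma_k$. There $\partial_j\sigma_k=\sigma_{k-1}(\lambda|j)>0$, and increasing coordinates preserves the cone. Raising $\lambda_2,\dots,\lambda_{n-1}$ to $\lambda_1$ gives
\[
0<\sigma_k(\lambda)\leq\sigma_k(\lambda_1,\dots,\lambda_1,\lambda_n)=\binom{n-1}{k}\lambda_1^{k}+\binom{n-1}{k-1}\lambda_1^{k-1}\lambda_n ,
\]
which yields the sharp bound directly, with no Newton inequality needed.

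A small slip in your motivating remark: $\sigma_1(\lambda|n)>0$ says nothing about $\lambda_n$; you meant $\sigma_1(\lambda|1)>0$.
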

\begin{proof}
The first three identities follow directly from the definition of the
elementary symmetric functions. For the last assertion, see
\cite[Lemma~11]{RenWang2023}.
\end{proof}

Recall that the quadratic form
\(\mathcal Q_\gamma(\lambda;\xi)\) is defined in \eqref{Qgamma}.
It arises naturally when \(\log\lambda_{\max}\) is used as the
leading term of a maximum-principle test function; see
Section~\ref{applications}. Notice that the parameter range in
Theorem~\ref{thm-crucial-ineq} permits \(\gamma>1\) when
\(k>n/2\), whereas it requires \(\gamma<1\) when \(k\leq n/2\).

For completeness, we record the corresponding result when the largest
eigenvalue is not simple. It follows readily from
Theorem~\ref{thm-crucial-ineq} by a standard perturbation argument.
\begin{corollary}\label{cor:multiple-largest-eigenvalues}
Let \(n,k,\gamma\) satisfy the assumptions of
Theorem~\ref{thm-crucial-ineq}, and let \(\eta_*\) be the constant
given there. Suppose that \(\lambda\in\Gamma_k\) and that its largest
component has multiplicity \(m\geq2\); that is, $a=\lambda_1=\cdots=\lambda_m>\lambda_{m+1}\geq\cdots\geq\lambda_n$. Assume that
\begin{eqnarray*}
    0<\frac{F}{a^k}\leq\eta_*.
\end{eqnarray*}
Then, for every \(\xi\in\mathbb R^n\) satisfying
\begin{eqnarray*}
    \xi_2=\cdots=\xi_m=0,
\end{eqnarray*}
we have
\begin{eqnarray}\label{eq:multiple-largest-eigenvalues}
    \frac{2}{aF}
    \left(
        \sum_{i=1}^nF^{ii}\xi_i
    \right)^2
    -\gamma\frac{F^{11}}{a^2}\xi_1^2
    -\frac1a
    \sum_{i,j=1}^nF^{ii,jj}\xi_i\xi_j
    +\frac2a\sum_{p=m+1}^n
    \frac{F^{pp}}{a-\lambda_p}\,\xi_p^2
    \geq0.
\end{eqnarray}
\end{corollary}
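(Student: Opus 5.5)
We derive the corollary from Theorem~\ref{thm-crucial-ineq} by perturbing $\lambda$ so that the top eigenvalue becomes simple, and then passing to the limit.

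\textbf{Perturbation.} Fix $\xi$ with $\xi_2=\cdots=\xi_m=0$. For small $\varepsilon>0$ set
\[
\lambda^\varepsilon=(a,\,a-\varepsilon,\,a-2\varepsilon,\ldots,a-(m-1)\varepsilon,\,\lambda_{m+1},\ldots,\lambda_n).
\]
For $\varepsilon$ small, $\lambda^\varepsilon$ is still ordered, because $a-(m-1)\varepsilon>\lambda_{m+1}$. It lies in $\Gamma_k$, since $\Gamma_k$ is open. Its largest entry $a=\lambda^\varepsilon_1$ is simple.

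\textbf{Smallness of $F/a^k$.} We have $F(\lambda^\varepsilon)\to F$. If $F/a^k<\eta_*$, then $F(\lambda^\varepsilon)/a^k\le\eta_*$ for small $\varepsilon$. Positivity of $F(\lambda^\varepsilon)$ is automatic in $\Gamma_k$.

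The borderline case $F/a^k=\eta_*$ needs a different perturbation, which decreases $F$. One option is to lower a non-top entry, say $\lambda_n\mapsto\lambda_n-\delta$: since $F^{nn}>0$ in $\Gamma_k$, $F$ strictly decreases, and for small $\delta$ we stay in $\Gamma_k$. We then let $\delta\to0$ at the end.

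Alternatively, if one only needs $\eta_*$ up to a harmless shrinking, we can just invoke the theorem with $\eta_*$ and treat the boundary value by continuity in $\lambda$. The limiting expression below is continuous in $\lambda$ wherever it is defined. So it suffices to prove the inequality on the dense set $F/a^k<\eta_*$ and pass to the limit; the limit $\varepsilon\to 0$ below also applies at $F/a^k=\eta_*$.

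\textbf{Applying the theorem and passing to the limit.} Theorem~\ref{thm-crucial-ineq} gives $\mathcal Q_\gamma(\lambda^\varepsilon;\xi)\ge0$. Now let $\varepsilon\to0$. All terms
\[
\frac{2}{aF}\Big(\sum F^{ii}\xi_i\Big)^2,\qquad \gamma\frac{F^{11}}{a^2}\xi_1^2,\qquad \frac1a\sum F^{ii,jj}\xi_i\xi_j
\]
are polynomial (or rational with nonvanishing denominator $F$) in $\lambda^\varepsilon$, so they converge to their values at $\lambda$.

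In the last sum, the indices $p=2,\dots,m$ carry the factor $\xi_p^2=0$. These terms vanish identically for every $\varepsilon>0$, even though their denominators $a-\lambda^\varepsilon_p=(p-1)\varepsilon\to0$. The remaining indices $p\ge m+1$ have denominators $a-\lambda_p>0$ bounded away from zero, so they converge.

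Hence the limit is exactly the left side of \eqref{eq:multiple-largest-eigenvalues}, and it is $\ge0$.

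\textbf{Main obstacle.} The only delicate point is the vanishing of the singular terms, and the hypothesis $\xi_2=\cdots=\xi_m=0$ is precisely what removes it. Beyond that, one needs care that the perturbed point respects the constraint $F/a^k\le\eta_*$. This is handled either by the density and continuity argument in the parameter, or by the additional downward perturbation of $\lambda_n$, which lowers $F$ using $F^{nn}>0$.
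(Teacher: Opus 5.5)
Your argument is correct and is essentially the paper's proof: you lower $\lambda_2,\ldots,\lambda_m$ slightly below $a$ so that the top entry becomes simple, apply Theorem~\ref{thm-crucial-ineq}, and let $\varepsilon\to0$, with the singular terms $p=2,\ldots,m$ removed by $\xi_p=0$. The paper uses the common value $a-\varepsilon$ for $\lambda_2,\ldots,\lambda_m$, which suffices because the theorem allows ties among $\lambda_2,\ldots,\lambda_n$. Your separate treatment of the borderline case $F/a^k=\eta_*$ is unnecessary: your perturbation only lowers coordinates, and $F^{ii}=\sigma_{k-1}(\lambda|i)>0$ on $\Gamma_k$, so $F(\lambda^\varepsilon)<F(\lambda)\le\eta_*a^k$ automatically.
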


\begin{proof}
For sufficiently small \(\varepsilon>0\), define
\begin{eqnarray*}
    \lambda^\varepsilon
    :=
    \left(
        a,
        \underbrace{a-\varepsilon,\ldots,a-\varepsilon}_{m-1\ {\rm times}},
        \lambda_{m+1},\ldots,\lambda_n
    \right).
\end{eqnarray*}
Then \(\lambda^\varepsilon\in\Gamma_k\) for sufficiently small
\(\varepsilon>0\), and $\lambda_1^\varepsilon>\lambda_2^\varepsilon\geq\cdots\geq\lambda_n^\varepsilon$.

Apply Theorem~\ref{thm-crucial-ineq} to
\((\lambda^\varepsilon,\xi)\) and using $\xi_2=\cdots=\xi_m=0$ gives the result.
\end{proof}

\begin{remark}\label{rmk-multiple}
Although the condition
\(\xi_2=\cdots=\xi_m=0\) is required in
Corollary~\ref{cor:multiple-largest-eigenvalues}, it imposes no
additional restriction in the geometric applications below.

Indeed, the standard viscosity formula for a multiple largest
eigenvalue in \cite{BCD2017} and the Codazzi equation give
\begin{eqnarray*}
    h_{\alpha\alpha;1}
    =
    h_{\alpha1;\alpha}=0,
    \qquad
    2\leq\alpha\leq m.
\end{eqnarray*}
Similarly, for Hessian equations, we have
\begin{eqnarray*}
    u_{\alpha\alpha1}
    =
    u_{\alpha1\alpha}
    =
    0,
    \qquad
    2\leq\alpha\leq m.
\end{eqnarray*}
Consequently, the vectors
\(\xi_i=h_{ii;1}\) and \(\xi_i=u_{ii1}\), respectively, satisfy the
hypothesis of Corollary~\ref{cor:multiple-largest-eigenvalues}.
\end{remark}

A direct corollary of Theorem \ref{thm-crucial-ineq} is the Jacobi inequality for $k>n/2$:
\begin{corollary}\label{cor:jacobi}
Let $n\geq3$, $n/2<k<n$, and let $u$ be a $C^4$ $k$-convex solution of the equation
\begin{eqnarray*}
    \sigma_k(D^2u)=1,\qquad \text{in } \Omega\subset\mathbb R^n.
\end{eqnarray*}
Suppose that $b:=\log\lambda_{\max}$ is sufficiently large. Then we have 
\begin{eqnarray}\label{jacobi}
\sum_{i,j=1}^nF^{ij}b_{ij}\geq \varepsilon \sum_{i,j=1}^nF^{ij}b_ib_j   
\end{eqnarray}
in the viscosity sense for any $0<\varepsilon<\frac{2k-n}{2k^2+n}$.
\end{corollary}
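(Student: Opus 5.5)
We derive the Jacobi inequality \eqref{jacobi} by a direct computation at a point $x_0$. Here $b=\log\lambda_{\max}(D^2u)$, $F=\sigma_k(\lambda(D^2u))\equiv1$, and the computation reduces everything to $\mathcal Q_\gamma\ge0$ from Theorem~\ref{thm-crucial-ineq}. Since $k>n/2$, the bound $\frac{2k}{n}>1$ holds, so the admissible range in Theorem~\ref{thm-crucial-ineq} contains every $\gamma<1+\frac{2k-n}{2k^2+n}$ (taking $\gamma\le 2k/n$ as well). For $0<\varepsilon<\frac{2k-n}{2k^2+n}$ we choose $\gamma:=1+\varepsilon$, which is admissible because $1+\varepsilon<\min\{2k/n,1+\frac{2k-n}{2k^2+n}\}$; this uses $\frac{2k-n}{2k^2+n}<\frac{2k-n}{n}$. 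The hypothesis ``$b$ sufficiently large'' means $a=\lambda_1\ge \eta_*^{-1/k}$, so $F/a^k=a^{-k}\le\eta_*$ and the theorem applies.

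Rotate coordinates so that $D^2u(x_0)$ is diagonal with $\lambda_1\ge\cdots\ge\lambda_n$. First assume $\lambda_1>\lambda_2$, so that $\lambda_1$ is smooth near $x_0$. The standard eigenvalue perturbation formulas give $\lambda_{1,i}=u_{11i}$ and
\[
\lambda_{1,ii}=u_{11ii}+2\sum_{p\ge2}\frac{u_{1pi}^2}{\lambda_1-\lambda_p}.
\]
Differentiating the equation twice in $e_1$ gives
\[
\sum_i F^{ii}u_{ii11}=-\sum_{i\ne j}F^{ii,jj}u_{ii1}u_{jj1}+\sum_{i\ne j}F^{ii}\ldots,
\]
more precisely $\sum_iF^{ii}u_{ii11}=-\sum_{i,j}F^{ii,jj}u_{ii1}u_{jj1}+\sum_{i\neq j}\sigma_{k-2}(\lambda|ij)u_{ij1}^2$, with the usual sign conventions for the off-diagonal second derivatives of $\sigma_k$. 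Commuting derivatives ($u_{11ii}=u_{ii11}$), we obtain
\[
\sum_iF^{ii}b_{ii}-\varepsilon\sum_iF^{ii}b_i^2=\frac1{a}\Bigl[-\sum F^{ii,jj}\xi_i\xi_j+\text{(off-diagonal terms)}+2\sum_{i}\sum_{p\ge2}\frac{F^{ii}u_{1pi}^2}{a-\lambda_p}\Bigr]-\frac{1+\varepsilon}{a^2}\sum_iF^{ii}u_{11i}^2,
\]
where $\xi_i=u_{ii1}$. The term $\frac2{aF}(\sum F^{ii}\xi_i)^2$ in $\mathcal Q_\gamma$ vanishes, since the first derivative of the equation gives $\sum F^{ii}\xi_i=0$ (as $f\equiv1$). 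Hence it can be added for free.

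The main step is to sort the cubic terms into the diagonal part $\xi$ and the rest, and to show the rest is nonnegative. For $i=1$, the terms $2F^{11}u_{11p}^2/(a-\lambda_p)$ from the $\lambda_{1,11}$ expansion must absorb $-\frac{1+\varepsilon}{a^2}F^{pp}u_{11p}^2$ for $p\ge2$. The off-diagonal concavity term $\sigma_{k-2}(\lambda|1p)\,u_{1p1}^2$ must also be combined in. The key identity is $F^{pp}-F^{11}=(a-\lambda_p)\sigma_{k-2}(\lambda|1p)$, which gives
\[
\frac{2F^{11}}{a-\lambda_p}+2\sigma_{k-2}(\lambda|1p)=\frac{2F^{pp}}{a-\lambda_p}\ \ge\ \frac{(1+\varepsilon)F^{pp}}{a}\quad\text{roughly, since }a-\lambda_p\le \tfrac{n}{k}a<2a.
\]
Here we use \eqref{n-k/k} together with $k>n/2$, which gives $a-\lambda_p<\frac nk a$; checking the constants is where care is needed. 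The remaining terms $i=p\ge2$ in the double sum give exactly $\frac2a\sum_{p\ge2}\frac{F^{pp}}{a-\lambda_p}\xi_p^2$. All other off-diagonal terms are nonnegative and are dropped. What remains is precisely $\mathcal Q_{1+\varepsilon}(\lambda;\xi)\ge0$, by Theorem~\ref{thm-crucial-ineq}.

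If $\lambda_1$ has multiplicity $m\ge2$, $b$ is only continuous, and we argue in the viscosity sense. We touch $b$ from below by the smooth function $\log$ of the $(1,1)$-entry in an eigenframe, following \cite{BCD2017}. As in Remark~\ref{rmk-multiple}, this yields $\xi_2=\cdots=\xi_m=0$, and Corollary~\ref{cor:multiple-largest-eigenvalues} replaces the theorem. The main obstacle is the bookkeeping in the $i=1$ off-diagonal step, that is, verifying that the absorption holds with the constant $1+\varepsilon$ using only $k>n/2$.
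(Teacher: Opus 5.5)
Your simple-eigenvalue computation is the paper's argument with the bookkeeping written out. Combining $\frac{2F^{11}}{a-\lambda_p}+2\sigma_{k-2}(\lambda|1p)=\frac{2F^{pp}}{a-\lambda_p}$ with $-\frac{1+\varepsilon}{a^2}F^{pp}u_{11p}^2$ gives exactly the coefficient $\frac{F^{pp}}{a^2}\bigl(\frac{a+\lambda_p}{a-\lambda_p}-\varepsilon\bigr)$ in \eqref{Fijbij-Fijbibj}. The constants you worried about do close: by \eqref{n-k/k}, $a-\lambda_p<\frac nk a$, so $\frac{2a}{a-\lambda_p}>\frac{2k}{n}>1+\varepsilon$. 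The cruder bound $a-\lambda_p<2a$ would only give $>1$, which is not enough. Your choice $\gamma=1+\varepsilon$, the use of $\sum F^{ii}\xi_i=0$, and the dropped off-diagonal terms all match the paper.

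The gap is in the multiple-eigenvalue paragraph, whose mechanism fails as written.

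\textbf{Why $\log u_{11}$ does not work.} The function $\log u_{11}$ (entry in the frame frozen at $x_0$) does touch $b$ from below. But its Hessian is $\frac{u_{11ij}}{a}-\frac{u_{11i}u_{11j}}{a^2}$, with none of the terms $\frac2a\sum_p\frac{u_{1pi}^2}{a-\lambda_p}$. You therefore lose both the term $\frac2a\sum_p\frac{F^{pp}}{a-\lambda_p}\xi_p^2$ of $\mathcal Q_\gamma$ and the $\frac{2F^{11}}{a-\lambda_p}$ part of your absorption. What remains is $-\frac1a\sum F^{ii,jj}\xi_i\xi_j-\gamma\frac{F^{11}}{a^2}\xi_1^2$ on $\{\sum F^{ii}\xi_i=0\}$. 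Normalizing $a=1$, Proposition~\ref{prop:exact-concavity} shows its minimum over $\xi_1=1$ is $k(k-1)F\Theta_k-\gamma P$. Since $\Theta_k<1$, this is negative once $P/F>k(k-1)/\gamma$, which happens in the hard region, where $P/F$ is unbounded. So this barrier does not satisfy the inequality. Moreover, $\log u_{11}\leq b$ holds for every $u$, so it carries no information on third derivatives and cannot yield $\xi_2=\cdots=\xi_m=0$.

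\textbf{The correct mechanism.} What \cite{BCD2017} and Remark~\ref{rmk-multiple} use are test functions from above, which is the relevant side for a viscosity subsolution. If $\varphi$ is smooth with $\varphi\geq b$ near $x_0$ and equality at $x_0$, then $e^{\varphi}\geq\langle D^2u\,v,v\rangle$ for every smooth unit field $v$ with $v(x_0)$ in the top eigenspace.
\begin{itemize}
\item The first-order condition gives $u_{\alpha\beta i}=a\varphi_i\delta_{\alpha\beta}$ for $\alpha,\beta\leq m$. Hence $\xi_\beta=u_{1\beta\beta}=0$ and $u_{11\beta}=0$ for $2\leq\beta\leq m$.
\item The second-order condition gives $\varphi_{ii}\geq\frac1a\bigl(u_{11ii}+2\sum_{p>m}\frac{u_{1pi}^2}{a-\lambda_p}\bigr)-\frac{u_{11i}^2}{a^2}$, with $\varphi_i=u_{11i}/a$.
\end{itemize}
With these, your bookkeeping runs verbatim with $p>m$, and Corollary~\ref{cor:multiple-largest-eigenvalues} concludes.
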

\begin{proof}
For any fixed point $x_0\in\Omega$, choose coordinates at $x_0$ such that
\begin{eqnarray*}
    D^2u
    =
    \operatorname{diag}(\lambda_1,\ldots,\lambda_n),
    \qquad
    a:=\lambda_1=\cdots=\lambda_m
    >\lambda_{m+1}\geq\cdots\geq\lambda_n.
\end{eqnarray*}
According to corollary \ref{cor:multiple-largest-eigenvalues} and remark \ref{rmk-multiple}, we may assume $m=1$.

Combining the second-derivative formula for the largest eigenvalue with the first and second derivatives of the equation, we obtain
\begin{eqnarray}\label{Fijbij-Fijbibj}
\sum_{i,j=1}^nF^{ij}b_{ij}-\varepsilon\sum_{i,j=1}^nF^{ij}b_ib_j=\mathcal{Q}_{1+\varepsilon}(\lambda;\xi)+\frac{1}{a^2}\sum_{p=2}^n\left(\frac{a+\lambda_p}{a-\lambda_p}-\varepsilon\right)F^{pp}u_{11p}^2.
\end{eqnarray}
Here $\mathcal{Q}_{1+\varepsilon}(\lambda;\xi)$ is the quadratic form defined in \eqref{Qgamma}, with $\xi_i:=u_{ii1}$. By Theorem~\ref{thm-crucial-ineq} and \eqref{n-k/k}, both terms on the right-hand side of \eqref{Fijbij-Fijbibj} are nonnegative whenever \(0<\varepsilon<(2k-n)/(2k^2+n)\). This proves \eqref{jacobi}.
\end{proof}

We now turn to the proof of
Theorem~\ref{thm-crucial-ineq}. By the homogeneity of \(\mathcal Q_\gamma\), it suffices to work under
the normalization
\begin{eqnarray*}
    1=a=\lambda_1=\lambda_{\max}
    >\lambda_2\geq\cdots\geq\lambda_n.
\end{eqnarray*}
Set
\begin{eqnarray*}
    P:=F^{11}=\sigma_{k-1}(\lambda|1).
\end{eqnarray*}
We divide the proof into two cases: the easy region
\begin{eqnarray*}
    \frac{P}{F}\leq 1+\frac{n}{2k^2},
\end{eqnarray*}
and the hard region
\begin{eqnarray*}
    \frac{P}{F}>1+\frac{n}{2k^2}.
\end{eqnarray*}
The quantity \(P/F\) describes the convexity of \(\lambda\).
In the easy region, \(\lambda\) is more convex and can be handled using
the optimal concavity of \(\sigma_k\), as shown in Section~\ref{easy}.
In the hard region, the situation becomes more complicated, and we need
to rewrite the equation in G{\aa}rding root coordinates and use the
concavity of the inverse G{\aa}rding-root function; see
Section~\ref{hard}.

\textbf{Throughout the proof of Theorem~\ref{thm-crucial-ineq}, we retain the above
normalization and simplicity assumption.}

\section{Optimal Concavity of \(\sigma_k\): the Easy Region}\label{easy}

Throughout this section, we retain the normalization
\(a=\lambda_1=1\) and consider the easy region defined by
\begin{eqnarray*}
    \frac{P}{F}
    \leq
    1+\frac{n}{2k^2}.
\end{eqnarray*}
In the easy region, \(\lambda\) is more convex. In fact, the term
\begin{eqnarray*}
    -\sum_{i,j=1}^n F^{ii,jj}\xi_i\xi_j
\end{eqnarray*}
alone is sufficient to control the principal part of the bad term. This control is provided by the optimal concavity of the \(\sigma_k\) operator; the case \(k=2\) was established
in \cite{Chen2013}.

Let \(H\) and \(g\) denote, respectively, the Hessian and the gradient
of \(\sigma_k\) at \(\lambda\):
\begin{eqnarray*}
    H:=D^2\sigma_k(\lambda),
    \qquad
    g:=D\sigma_k(\lambda).
\end{eqnarray*}
Then the term 
$$
\sum_{i,j=1}^n F^{ii,jj}x_ix_j
$$
can be written as 
$$
x^\mathsf T Hx.
$$

We begin with some basic properties of $H$ and $g$:
\begin{lemma}\label{lem:inertia}
For every \(\lambda\in\Gamma_k\), the matrix \(H\) is invertible and has inertia \((1,n-1)\); that is, it has one positive and \(n-1\) negative eigenvalues. Moreover,
\begin{eqnarray*}
    H\lambda=(k-1)g,\qquad
    H^{-1}g=\frac{\lambda}{k-1},\qquad
    g^{\mathsf T}H^{-1}g=\frac{kF}{k-1}.
\end{eqnarray*}
Furthermore, every diagonal entry of \(H^{-1}\) is negative:
\begin{eqnarray*}
    (H^{-1})_{ii}<0,\qquad 1\leq i\leq n.
\end{eqnarray*}
\end{lemma}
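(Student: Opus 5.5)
The plan is to view \(\sigma_k\) as a hyperbolic polynomial in the direction \((1,\ldots,1)\) with hyperbolicity cone \(\Gamma_k\). The algebraic identities come first, since they only need homogeneity. The gradient \(g=D\sigma_k\) is homogeneous of degree \(k-1\), so Euler's relation gives \(H\lambda=(k-1)g\). Once \(H\) is known to be invertible, this immediately gives \(H^{-1}g=\lambda/(k-1)\). Then property (2) of the Proposition yields
\[
g^{\mathsf T}H^{-1}g=\frac{g\cdot\lambda}{k-1}=\frac{kF}{k-1}.
\]
It also gives \(\lambda^{\mathsf T}H\lambda=k(k-1)F>0\), so \(H\) has at least one positive eigenvalue.

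For the inertia, I use the concavity of \(\sigma_k^{1/k}\) on \(\Gamma_k\) (Gårding). Differentiating twice gives
\[
x^{\mathsf T}Hx\le\frac{k-1}{k}\,\frac{(g\cdot x)^2}{F},
\]
so \(H\le0\) on the hyperplane \(g^\perp\), and the positive index is exactly one. The main obstacle is strict negativity on \(g^\perp\), which is equivalent to invertibility. I would argue via real-rootedness. Take \(x\in g^\perp\) with \(x^{\mathsf T}Hx=0\). The polynomial \(q(t)=\sigma_k(\lambda+tx)\) has only real roots, and Newton's inequality \((q')^2\ge\frac{k}{k-1}qq''\) at \(t=0\) holds with equality. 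This forces all roots of \(q\) to coincide, so \(q(t)=F(1+ct)^k\). Since \(q'(0)=g\cdot x=0\), we get \(c=0\), so \(q\equiv F\) and \(\lambda+tx\in\Gamma_k\subset\Gamma_2\) for all \(t\in\mathbb R\). But \(\Gamma_2\) contains no line, because \(|\mu|^2<\sigma_1(\mu)^2\) there and hence \(|\mu|<\sigma_1(\mu)\), which grows only linearly (here \(k\ge2\)). Comparing growth in \(t\) then forces \(x=0\). Hence \(H\) is negative definite on \(g^\perp\), and \(H\) is invertible with inertia \((1,n-1)\).

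For the diagonal of \(H^{-1}\), I use a Lorentzian duality fact, proved by a short Schur-complement computation. For a nondegenerate form of signature \((1,n-1)\) and a vector \(v\ne0\), we have \(v^{\mathsf T}H^{-1}v<0\) if and only if the hyperplane \(v^\perp\) contains a vector \(z\) with \(z^{\mathsf T}Hz>0\). Both the semidefinite-degenerate and the negative-definite restrictions correspond to \(v^{\mathsf T}H^{-1}v\ge0\). Take \(v=e_i\). Since \(n\ge3\), pick \(j\ne l\), both different from \(i\), and set \(z=e_j+e_l\in e_i^\perp\). Then
\[
z^{\mathsf T}Hz=2\sigma_{k-2}(\lambda|jl)>0,
\]
using the standard fact that \(\lambda\in\Gamma_k\) implies \((\lambda|jl)\in\Gamma_{k-2}\); when \(k=2\) this quantity is just \(2\). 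This gives \((H^{-1})_{ii}<0\) for every \(i\). The only delicate point I expect is stating the duality criterion cleanly, and I would verify it directly by writing \(H^{-1}e_i=y\), so that \(y^{\mathsf T}Hy=(H^{-1})_{ii}\) and \(y\) is \(H\)-orthogonal to \(e_i^\perp\).
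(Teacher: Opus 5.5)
Your proof is correct, and its skeleton matches the paper's. Both use Euler's relation for the three identities, concavity of \(\sigma_k^{1/k}\) to get \(H\le 0\) on \(g^\perp\), and \(\lambda^{\mathsf T}H\lambda=k(k-1)F>0\) for the positive direction. Both then use the test vector \(e_p+e_q\in e_i^\perp\), with \(H\)-value \(2\sigma_{k-2}(\lambda|pq)>0\), for the diagonal of \(H^{-1}\). The differences lie in two justifications.

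\textbf{Strict negativity on \(g^\perp\).} The paper simply asserts that \(D^2(\sigma_k^{1/k})\) has kernel exactly \(\operatorname{span}\{\lambda\}\). This is equivalent to strict negativity of \(H\) on \(g^\perp\), and you actually prove it.
\begin{itemize}
\item You use real-rootedness of \(t\mapsto\sigma_k(\lambda+tx)\) to force \(\sigma_k(\lambda+tx)\equiv F\) when \(g\cdot x=x^{\mathsf T}Hx=0\), and then the absence of lines in \(\Gamma_2\).
\item The Newton-inequality step can be shortened. Write \(\sigma_k(\lambda+tx)=F\prod_j(1-\theta_jt)\). The two vanishing derivatives give \(\sum_j\theta_j=\sigma_2(\theta)=0\), hence \(\sum_j\theta_j^2=0\).
\item You use one fact tacitly and should state it: \(\Gamma_k\) is the connected component of \(\{\sigma_k>0\}\) containing \(\lambda\). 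This is what places the whole line \(\lambda+tx\) inside \(\Gamma_k\subset\Gamma_2\).
\end{itemize}

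\textbf{Sign of \((H^{-1})_{ii}\).} The paper writes \((H^{-1})_{ii}=\det H^{(i)}/\det H\), where \(H^{(i)}\) is \(H\) with the \(i\)-th row and column deleted. It shows \(H^{(i)}\) has inertia \((1,n-2)\): negative on \(e_i^\perp\cap g^\perp\), positive on \(e_p+e_q\). So the two determinants have opposite signs. You instead use a coordinate-free criterion: \(v^{\mathsf T}H^{-1}v<0\) exactly when \(v^\perp\), which is the \(H\)-orthogonal complement of \(H^{-1}v\), contains an \(H\)-positive vector. Your criterion needs only the signature \((1,n-1)\) of \(H\) and works for any \(v\). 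The paper's cofactor count is quicker once negativity on \(g^\perp\) is available.
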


\begin{proof}
The Hessian of \(\sigma_k^{1/k}\) is negative semidefinite, with kernel
\(\operatorname{span}\{\lambda\}\). Euler's identity gives
\(g\cdot\lambda=kF>0\), and therefore
\(g^\perp\cap\operatorname{span}\{\lambda\}=\{0\}\). It follows that
\(D^2(\sigma_k^{1/k})\) is negative definite on \(g^\perp\). On this
hyperplane,
\begin{eqnarray*}
    D^2(\sigma_k^{1/k})
    =\frac{1}{k}\sigma_k^{\frac{1}{k}-1}H.
\end{eqnarray*}
Thus \(H\) has at least \(n-1\) negative directions. On the other
hand,
\begin{eqnarray*}
    \lambda^{\mathsf T}H\lambda=k(k-1)F>0.
\end{eqnarray*}
Therefore, \(H\) has exactly one positive direction and \(n-1\) negative
directions. In particular, \(H\) is invertible. The three identities in
the statement follow directly by homogeneity.

To prove \((H^{-1})_{ii}<0\), let \(H^{(i)}\) denote the matrix
obtained from \(H\) by deleting its \(i\)-th row and \(i\)-th column, and
let \(e_i\) be the \(i\)-th standard basis vector. Since \(e_i^\perp\)
and \(g^\perp\) are both \((n-1)\)-dimensional, their intersection has
dimension at least \(n-2\). It follows that \(H^{(i)}\) has at least
\(n-2\) negative directions.

For the fixed index \(i\), choose \(p\) and \(q\) so that
\(p,q,i\) are pairwise distinct. Set $v:=e_p+e_q$. Then
\begin{eqnarray*}
    v^{\mathsf T}Hv
    =2H_{pq}
    =2\sigma_{k-2}(\lambda|pq)>0.
\end{eqnarray*}
Hence \(H^{(i)}\) has exactly one positive direction and \(n-2\) negative
directions. Consequently,
\begin{eqnarray*}
    (H^{-1})_{ii}
    =\frac{\det H^{(i)}}{\det H}<0.
\end{eqnarray*}
\end{proof}
Set
\begin{eqnarray}\label{def-IkThetak}
\mathcal I_k(\lambda)=-F(H^{-1})_{11},\qquad \Theta_k(\lambda)=\frac{1}{1+k(k-1)\mathcal I_k(\lambda)}.    
\end{eqnarray}
We now present the exact maximum of the quadratic form
\(x^{\mathsf T}Hx\) subject to the linear constraint obtained by
differentiating the \(\sigma_k\) equation.
\begin{proposition}\label{prop:exact-concavity}
Let \(b,D\in\mathbb R\), and suppose that \(x\in\mathbb R^n\) satisfies
\(x_1=b\) and \(g\cdot x+D=0\). Then
\begin{eqnarray}\label{eq:exact-concavity}
    x^{\mathsf T}Hx
    \leq
    \frac{k-1}{kF}D^2
    -k(k-1)F\Theta_k
    \left(b+\frac{D}{kF}\right)^2,
\end{eqnarray}
where $\Theta_k$ is defined in \eqref{def-IkThetak}. Moreover, the right-hand side is the exact maximum of
\(x^{\mathsf T}Hx\) over this affine subspace.
\end{proposition}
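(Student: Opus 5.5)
The plan is to split off the component of \(x\) along \(\lambda\), which absorbs the linear constraint \(g\cdot x+D=0\), and then maximize a negative definite form on \(g^\perp\) subject to a single linear constraint by Lagrange multipliers. Throughout, the normalization \(\lambda_1=1\) is in force; otherwise \(b+D/(kF)\) would read \(b+\lambda_1D/(kF)\).

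\textbf{Step 1: reduction to \(g^\perp\).} Write \(x=s\lambda+y\) with \(g\cdot y=0\). This is possible because \(g\cdot\lambda=kF>0\), by Lemma~\ref{lem:inertia} and Euler's identity. The constraint \(g\cdot x=-D\) forces \(s=-D/(kF)\). Using \(H\lambda=(k-1)g\) and \(\lambda^{\mathsf T}H\lambda=k(k-1)F\), the cross term is \(2s\,y^{\mathsf T}H\lambda=2s(k-1)\,g\cdot y=0\). Hence
\begin{eqnarray*}
x^{\mathsf T}Hx=s^2k(k-1)F+y^{\mathsf T}Hy=\frac{k-1}{kF}D^2+y^{\mathsf T}Hy .
\end{eqnarray*}
The remaining constraint \(x_1=b\) becomes \(y_1=b-s\lambda_1=b+\frac{D}{kF}=:c\). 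So the problem reduces to maximizing \(y^{\mathsf T}Hy\) over the affine set \(\{y: g\cdot y=0,\ y_1=c\}\).

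\textbf{Step 2: the reduced maximization.} By the proof of Lemma~\ref{lem:inertia}, \(H\) is negative definite on \(g^\perp\). Hence it is negative definite on the direction space \(g^\perp\cap e_1^\perp\) of this affine set. Therefore \(y\mapsto y^{\mathsf T}Hy\) is strictly concave there, any critical point is the unique global maximizer, and the supremum is attained.

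The Lagrange conditions read \(Hy=\mu e_1+\nu g\), so \(y=\mu H^{-1}e_1+\nu\lambda/(k-1)\), using \(H^{-1}g=\lambda/(k-1)\) from Lemma~\ref{lem:inertia}.
\begin{itemize}
\item The constraint \(g\cdot y=0\), together with \(g^{\mathsf T}H^{-1}e_1=\lambda_1/(k-1)=1/(k-1)\) and \(g^{\mathsf T}H^{-1}g=kF/(k-1)\), gives \(\nu=-\mu/(kF)\).
\item Substituting, \(y_1=\mu\bigl((H^{-1})_{11}-\frac{1}{k(k-1)F}\bigr)=-\mu\,\frac{1+k(k-1)\mathcal I_k}{k(k-1)F}\).
\item By Lemma~\ref{lem:inertia}, \(\mathcal I_k=-F(H^{-1})_{11}>0\), so the factor \(1+k(k-1)\mathcal I_k\) is positive. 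Thus \(\mu=-k(k-1)F\Theta_k\,c\) is uniquely determined, and the critical point exists.
\end{itemize}
At this point the value is \(y^{\mathsf T}Hy=\mu y_1+\nu\, g\cdot y=\mu c=-k(k-1)F\Theta_k c^2\). Adding the term from Step 1 yields \eqref{eq:exact-concavity}, with equality at the constructed \(x\). This proves both the inequality and its sharpness.

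\textbf{Main obstacle: justifying the Lagrange step.} The algebra is routine once the decomposition is chosen. The care lies in two points.
\begin{itemize}
\item The critical point must be a maximum, not merely stationary; this follows from negative definiteness on the direction space, since the indefiniteness of \(H\) is confined to the \(\lambda\)-direction, which has already been removed.
\item The multiplier system must be solvable. If \(e_1\) and \(g\) were parallel, the constraints would be dependent, but the explicit formula for \(y\) above still satisfies both constraints. So I would simply verify directly that the constructed \(y\) is feasible and critical, rather than appeal to a rank condition. The positivity \(1+k(k-1)\mathcal I_k>0\), from \((H^{-1})_{11}<0\), is what makes the construction well defined.
\end{itemize}
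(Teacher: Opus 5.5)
Your proposal is correct and is essentially the paper's argument. The paper likewise uses negative definiteness of \(H\) on \(g^\perp\) and the first-order condition \(Hx_*\in\operatorname{span}\{e_1,g\}\) to place the maximizer in \(\operatorname{span}\{H^{-1}e_1,\lambda\}\); its explicit maximizer \(x_*=-\frac{D}{kF}\lambda-\frac{b+D/(kF)}{L_1}w\), with \(w=H^{-1}e_1-\frac{\lambda}{k(k-1)F}\in g^\perp\) when \(\lambda_1=1\), is exactly your splitting \(x=s\lambda+y\) with \(y\) the Lagrange solution, and splitting off the \(\lambda\)-component first only makes the term \(\frac{k-1}{kF}D^2\) visible before the multiplier computation.
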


\begin{proof}
Define
\begin{eqnarray*}
    \mathcal T
    =
    \bigl\{
        z\in\mathbb R^n:
        z_1=0,\ g\cdot z=0
    \bigr\}
    \subset g^\perp.
\end{eqnarray*}
This is precisely the tangent space of the affine constraint set. 

We first show that $H$ is strictly negative definite on $\mathcal T$. Set
\(u=H^{-1}g\). By Lemma~\ref{lem:inertia},
\begin{eqnarray*}
    u^{\mathsf T}Hu
    =g^{\mathsf T}H^{-1}g
    =\frac{kF}{k-1}>0.
\end{eqnarray*}
For every \(t\in g^\perp\), we have
\begin{eqnarray*}
    t^{\mathsf T}Hu=t^{\mathsf T}g=0.
\end{eqnarray*}
Thus \(g^\perp\) is the \(H\)-orthogonal complement of the positive
direction \(u\). As \(H\) has inertia \((1,n-1)\), its restriction to
\(g^\perp\) is negative definite. In particular,
\(\mathcal T\subset g^\perp\) implies
\begin{eqnarray*}
    H|_{\mathcal T}<0.
\end{eqnarray*}
Consequently, \(x^{\mathsf T}Hx\) is a strictly concave quadratic
function on the given affine subspace and admits a unique maximizer
\(x_*\).

The first-order maximum condition gives, for every
\(t\in\mathcal T\),
\begin{eqnarray*}
    \left.
    \frac{d}{ds}
    (x_*+st)^{\mathsf T}H(x_*+st)
    \right|_{s=0}
    =0.
\end{eqnarray*}
Hence \(t^{\mathsf T}Hx_*=0\) for every \(t\in\mathcal T\), and therefore
\begin{eqnarray*}
    Hx_*\in\mathcal T^\perp
    =\operatorname{span}\{e_1,g\}.
\end{eqnarray*}
It follows that
\begin{eqnarray*}
    x_*\in\operatorname{span}\{H^{-1}e_1,\lambda\}.
\end{eqnarray*}
Solving the constraints
\(e_1\cdot x_*=b\) and \(g\cdot x_*+D=0\) yields
\begin{eqnarray*}
    x_*
    =
    -\frac{D}{kF}\lambda
    -\frac{b+D/(kF)}{L_1}w,
\end{eqnarray*}
where
\begin{eqnarray*}
    L_1
    =
    \frac{1}{k(k-1)F}-(H^{-1})_{11}>0,
    \qquad
    w
    =
    H^{-1}e_1-\frac{1}{k(k-1)F}\lambda.
\end{eqnarray*}
Substituting \(x_*\) into \(x^{\mathsf T}Hx\) gives the
right-hand side of \eqref{eq:exact-concavity}.
\end{proof}
To proceed, we need an estimate for \(\Theta_k\), or equivalently for
\(\mathcal I_k\), in a form suited to our argument (see Proposition \ref{prop:shape-modulus}). Fix \(i\geq2\). Define
\begin{eqnarray}
 p_i(\lambda):=\sigma_{k-1}(\lambda|i),
 \qquad
 q_i(\lambda):=\sigma_k(\lambda|i).
 \label{eq:pi-qi}
\end{eqnarray}
Both quantities are independent of \(\lambda_i\), and
\begin{eqnarray}
 F=\lambda_i p_i+q_i,
 \qquad
 \varepsilon_i:=\frac{F}{p_i}
 =\lambda_i+\frac{q_i}{p_i}>0.
 \label{eq:epsi}
\end{eqnarray}

Set
\begin{eqnarray*}
 E_i:=\{w\in\mathbb R^n:w_i=0\},
 \qquad
 V_i:=\{w\in E_i:Dp_i\cdot w=0\}.
\end{eqnarray*}
We still view \(p_i\) and \(q_i\) as functions on \(\mathbb R^n\). Then their first derivatives with respect to \(\lambda_i\) vanish,
and the \(i\)-th rows and columns of their Hessians are zero. On \(V_i\), define
\begin{eqnarray}
 \mathcal C_i:=-D^2p_i,
 \qquad
 \mathcal R_i:=\frac{q_i}{p_i}D^2p_i-D^2q_i.
 \label{eq:Ci-Ri}
\end{eqnarray}
The strict concavity of \(\sigma_{k-1}^{1/(k-1)}\) implies that
\(\mathcal C_i\) is positive definite on \(V_i\). On the other hand,
for \(w\in V_i\),
\begin{eqnarray*}
 D^2\!\left(\frac{q_i}{p_i}\right)[w,w]
 =
 \frac1{p_i}
 \left(
 D^2q_i-\frac{q_i}{p_i}D^2p_i
 \right)[w,w].
\end{eqnarray*}
Hence, by the concavity of the Hessian quotient
\(\sigma_k/\sigma_{k-1}\), we have \(\mathcal R_i\geq0\) on \(V_i\).

\

We first derive a variational energy characterization of
\(\mathcal I_k\). More precisely, Lemma~\ref{lem:energy-original}
expresses the inverse-Hessian quantity \(-F(H^{-1})_{11}\) as a variational supremum over the constrained subspace \(V_i\). Equivalently, \(\mathcal I_k/p_i\) is the squared dual norm of the
coordinate functional \(w\mapsto w_1\) with respect to the
positive-definite quadratic form
\(\mathcal C_i+\varepsilon_i^{-1}\mathcal R_i\); the associated
minimum-energy problem over all \(w\in V_i\) satisfying \(w_1=1\)
has value \(p_i/\mathcal I_k\). A key
feature of this representation is that the dependence on
\(\lambda_i\) is isolated in the single scalar parameter
\(\varepsilon_i\), while \(\mathcal C_i\) and \(\mathcal R_i\) depend
only on the remaining coordinates. This formulation makes coordinate
monotonicity transparent and provides a convenient framework for the
limiting arguments below.
\begin{lemma}\label{lem:energy-original}
For every \(i\geq2\),
\begin{eqnarray}
 \mathcal I_k(\lambda)=
 p_i
 \sup_{0\ne w\in V_i}
 \frac{w_1^2}
 {\displaystyle
  \mathcal C_i[w,w]+\frac1{\varepsilon_i}\mathcal R_i[w,w]}.
 \label{eq:energy-original}
\end{eqnarray}
\end{lemma}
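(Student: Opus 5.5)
The plan is to compute \((H^{-1})_{11}\) directly, by solving the linear system \(Hy=e_1\) in block form adapted to the splitting \(\mathbb R^n=E_i\oplus\mathbb R e_i\), for a fixed \(i\geq2\). By Lemma~\ref{lem:inertia}, \(H\) is invertible, so there is a unique \(y\) with \(Hy=e_1\), and then \((H^{-1})_{11}=y_1\).

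\textbf{Step 1: block structure of \(H\).} Since \(F=\lambda_ip_i+q_i\) with \(p_i,q_i\) independent of \(\lambda_i\), we have \(\partial_iF=p_i\). Hence
\begin{eqnarray*}
H_{ii}=0,\qquad H_{ij}=\partial_jp_i\quad(j\neq i),
\end{eqnarray*}
and on \(E_i\) the Hessian is \(H|_{E_i}=\lambda_iD^2p_i+D^2q_i\). Using \(\varepsilon_i=\lambda_i+q_i/p_i\) and the definitions \eqref{eq:Ci-Ri}, this gives
\begin{eqnarray*}
\lambda_iD^2p_i+D^2q_i=\varepsilon_iD^2p_i-\mathcal R_i=-\varepsilon_i\Bigl(\mathcal C_i+\tfrac1{\varepsilon_i}\mathcal R_i\Bigr)=:-\varepsilon_i\mathcal B_i .
\end{eqnarray*}
This identity holds as bilinear forms on \(E_i\); we only use it restricted to \(V_i\). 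There \(\mathcal B_i\) is positive definite, because \(\mathcal C_i>0\), \(\mathcal R_i\geq0\) and \(\varepsilon_i>0\) by \eqref{eq:epsi}.

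\textbf{Step 2: solve \(Hy=e_1\).} Write \(y=w+te_i\) with \(w\in E_i\). The \(i\)-th row reads \(Dp_i\cdot w=0\), since \((e_1)_i=0\) because \(i\geq2\); thus \(w\in V_i\). Now pair the \(E_i\)-rows with an arbitrary \(v\in V_i\). The term \(t\,Dp_i\cdot v\) vanishes, and we obtain
\begin{eqnarray*}
-\varepsilon_i\,\mathcal B_i[v,w]=v_1\qquad\text{for all } v\in V_i .
\end{eqnarray*}
So \(w\) is \(-\varepsilon_i^{-1}\) times the \(\mathcal B_i\)-Riesz representer of the functional \(\ell(v)=v_1\) on \(V_i\). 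Taking \(v=w\) gives
\begin{eqnarray*}
y_1=w_1=-\varepsilon_i\,\mathcal B_i[w,w].
\end{eqnarray*}
By Cauchy--Schwarz in the \(\mathcal B_i\)-inner product, \(\mathcal B_i[w,w]=\varepsilon_i^{-2}\|\ell\|_{\mathcal B_i^*}^2\), where
\begin{eqnarray*}
\|\ell\|_{\mathcal B_i^*}^2=\sup_{0\neq v\in V_i}\frac{v_1^2}{\mathcal B_i[v,v]} .
\end{eqnarray*}

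\textbf{Step 3: conclude.} Steps 1 and 2 give \((H^{-1})_{11}=-\varepsilon_i^{-1}\sup_{V_i}v_1^2/\mathcal B_i[v,v]\). Multiplying by \(-F\) and using \(F/\varepsilon_i=p_i\) yields \eqref{eq:energy-original}.

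The only delicate point is positive definiteness of \(\mathcal B_i\) on \(V_i\), which is what makes the Riesz representation and the supremum well defined. This reduces to the two concavity facts already recorded before the lemma, together with \(\varepsilon_i>0\). A minor side check is that \(\ell\not\equiv0\) on \(V_i\); this is consistent with Lemma~\ref{lem:inertia}, since \((H^{-1})_{11}<0\) forces \(w\neq0\), so the supremum is positive.
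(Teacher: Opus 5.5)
Your argument is correct and is essentially the paper's proof: your decomposition $y=w+te_i$ of $H^{-1}e_1$ is the paper's $z=Y-\alpha e_i$, the $i$-th row places $w$ in $V_i$, pairing with $v\in V_i$ gives $\bigl(\mathcal C_i+\varepsilon_i^{-1}\mathcal R_i\bigr)[w,v]=-\varepsilon_i^{-1}v_1$, and Riesz plus Cauchy--Schwarz finish exactly as in the paper. One small caveat, which the paper's setup shares: for $k=2$ the form $\mathcal C_i$ vanishes identically because $p_i=\sigma_1(\lambda|i)$ is linear, so positive definiteness of $\mathcal B_i$ on $V_i$ must come instead from $\mathcal R_i[w,w]=\sum_{j\neq i}w_j^2$ there (using $\sum_{j\neq i}w_j=0$ on $V_i$), rather than from $\mathcal C_i>0$.
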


\begin{proof}
Set
\begin{eqnarray*}
 Y:=H^{-1}e_1,
 \qquad
 \alpha:=Y_i,
 \qquad
 z:=Y-\alpha e_i.
\end{eqnarray*}
Since \(Dp_i=He_i\), a direct calculation gives
\begin{eqnarray*}
 Dp_i\cdot z
 =
 He_i\cdot(Y-\alpha e_i) 
 =
 e_i^{\mathsf T}e_1-\alpha e_i^{\mathsf T}He_i
 =0.
\end{eqnarray*}
Thus \(z\in V_i\).

For any \(w\in V_i\), we have
\begin{eqnarray*}
 w_1
 =
 w^{\mathsf T}HY 
 =
 \alpha\,w^{\mathsf T}He_i+H[w,z]
 =H[w,z],
\end{eqnarray*}
where we have used \(He_i=Dp_i\) and \(Dp_i\cdot w=0\). Since
\(w,z\in V_i\), the equation \(F=\lambda_i p_i+q_i\) gives
\begin{eqnarray*}
 H[w,z]
 =
 \lambda_iD^2p_i[w,z]+D^2q_i[w,z] 
 =
 -\varepsilon_i
 \left(
 \mathcal C_i+\frac1{\varepsilon_i}\mathcal R_i
 \right)[w,z].
\end{eqnarray*}
Consequently,
\begin{eqnarray}
 \left(
 \mathcal C_i+\frac1{\varepsilon_i}\mathcal R_i
 \right)[z,w]
 =
 -\frac1{\varepsilon_i}w_1
 \qquad (w,z\in V_i).
 \label{eq:Riesz-original}
\end{eqnarray}

Moreover, the quadratic form
\begin{eqnarray*}
 \mathcal A_i
 :=
 \mathcal C_i+\frac1{\varepsilon_i}\mathcal R_i
\end{eqnarray*}
is positive definite on \(V_i\). By the Riesz representation theorem, there exists a unique
\(u\in V_i\) such that
\begin{eqnarray*}
    \mathcal A_i[u,w]=w_1
    \qquad\text{for every }w\in V_i.
\end{eqnarray*}
Since \(i\ne1\),
\begin{eqnarray*}
 (H^{-1})_{11}
 =
 e_1\cdot Y
 =e_1\cdot z
 =z_1.
\end{eqnarray*}
Therefore, the Cauchy--Schwarz inequality gives
\begin{eqnarray*}
 -(H^{-1})_{11}
 =
 -z_1
 =\frac1{\varepsilon_i}\,u_1 
 =
 \frac1{\varepsilon_i}
 \sup_{0\ne w\in V_i}
 \frac{w_1^2}{\mathcal A_i[w,w]},
\end{eqnarray*}
which proves
\eqref{eq:energy-original}.
\end{proof}
A direct corollary of Lemma \ref{lem:energy-original} is the monotonicity of $\mathcal I_k$.
\begin{corollary}
\label{cor:coordinate-monotonicity}
Fix \(i\geq2\) and all the remaining coordinates. Then, for every
\(s\geq0\),
\begin{eqnarray*}
 \mathcal I_k(\lambda+s e_i)\geq\mathcal I_k(\lambda).
\end{eqnarray*}
\end{corollary}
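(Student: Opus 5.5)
The plan is to read off the monotonicity directly from the variational formula of Lemma~\ref{lem:energy-original}, applied with the same index \(i\) that is being moved. The key point is that in
\begin{eqnarray*}
 \mathcal I_k(\lambda)=
 p_i
 \sup_{0\ne w\in V_i}
 \frac{w_1^2}
 {\mathcal C_i[w,w]+\varepsilon_i^{-1}\mathcal R_i[w,w]},
\end{eqnarray*}
the quantities \(p_i\), \(q_i\), \(Dp_i\), \(D^2p_i\), \(D^2q_i\) are independent of \(\lambda_i\). Hence \(V_i\), \(\mathcal C_i\) and \(\mathcal R_i\) are unchanged when \(\lambda\) is replaced by \(\lambda+se_i\). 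The only ingredient that varies is the scalar \(\varepsilon_i=\lambda_i+q_i/p_i\), which becomes \(\varepsilon_i+s\).

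First, I will check that \(\lambda+se_i\in\Gamma_k\) for \(s\geq0\), so that the lemma applies at both points. This holds because \(\Gamma_k\) is a convex cone containing the positive cone, so \(\lambda+se_i\) is the sum of an element of \(\Gamma_k\) and an element of its closure. This also guarantees that \(p_i>0\) and \(\varepsilon_i>0\) at both points.

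Second, I will compare the two denominators. Since \(\mathcal R_i\geq0\) on \(V_i\), as noted before the lemma via concavity of \(\sigma_k/\sigma_{k-1}\), and since \(\varepsilon_i+s\geq\varepsilon_i>0\), we have
\begin{eqnarray*}
 \mathcal C_i[w,w]+\frac{1}{\varepsilon_i+s}\mathcal R_i[w,w]\le \mathcal C_i[w,w]+\frac1{\varepsilon_i}\mathcal R_i[w,w]
\end{eqnarray*}
for every \(w\in V_i\). Both sides are positive for \(w\neq0\), because \(\mathcal C_i\) is positive definite on \(V_i\). Therefore each Rayleigh quotient, and hence the supremum, does not decrease.

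Finally, multiplying by the unchanged factor \(p_i>0\) gives \(\mathcal I_k(\lambda+se_i)\geq\mathcal I_k(\lambda)\). The one point needing care is that \(V_i\) and the forms are evaluated at the same base data; this follows from the independence of \(p_i\) and \(q_i\) from \(\lambda_i\). So there is no real obstacle beyond confirming that the shifted vector stays in \(\Gamma_k\).
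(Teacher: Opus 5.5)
Your proposal is correct and is essentially the paper's own proof: apply Lemma~\ref{lem:energy-original} with the moved index \(i\). Then \(p_i\), \(V_i\), \(\mathcal C_i\), \(\mathcal R_i\) are unchanged while \(\varepsilon_i\) becomes \(\varepsilon_i+s\), so \(\mathcal R_i\geq0\) on \(V_i\) makes the supremum nondecreasing. Your extra check that \(\lambda+se_i\) stays in \(\Gamma_k\), so the lemma applies at both points, is a detail the paper leaves implicit.
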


\begin{proof}
In \eqref{eq:energy-original}, the quantities \(p_i,q_i\), and \(V_i\),
as well as the forms \(\mathcal C_i\) and \(\mathcal R_i\), are
unchanged as \(s\) varies, whereas
\(\varepsilon_i\) is replaced by \(\varepsilon_i+s\). Since
\(\mathcal R_i\geq0\) on \(V_i\), the monotonicity follows.
\end{proof}

To analyze the energy characterization in
Lemma~\ref{lem:energy-original} as \(F\to0\),
we need a compactness principle that allows us to pass to the limit in
the corresponding variational quotients. Since
\(\varepsilon_i\to0\), an approximate maximizing
sequence can yield a nonzero limit only if \(\mathcal R_i[w,w]\) tends to zero. Hence every such limiting direction lies in the kernel of the limiting quadratic form. At the same time, both
the constraint subspaces and the quadratic forms may vary along the
sequence. The following lemma makes this limiting behavior
precise and provides the required upper bound.
\begin{lemma}\label{lem:penalty}
Let \(E\) be a fixed finite-dimensional Euclidean space, and suppose
\begin{eqnarray*}
 u_j\longrightarrow u_*\ne0,
 \qquad
 V_j:=E\cap u_j^\perp,
 \qquad
 V_*:=E\cap u_*^\perp,
\end{eqnarray*}
and
\begin{eqnarray*}
 \mathcal C_j\longrightarrow\mathcal C_*,
 \qquad
 \mathcal R_j\longrightarrow\mathcal R_*,
 \qquad
 \varepsilon_j\longrightarrow0^+,
 \qquad
 l_j\longrightarrow l_*.
\end{eqnarray*}
Assume that
\begin{eqnarray*}
 \mathcal C_*>0\quad\text{on }V_*,
 \qquad
 \mathcal R_j\geq0\quad\text{on }V_j,
 \qquad
 \mathcal R_*\geq0\quad\text{on }V_*.
\end{eqnarray*}
If
\begin{eqnarray*}
 M_j
 :=
 \sup_{0\ne w\in V_j}
 \frac{l_j(w)^2}
 {\mathcal C_j[w,w]+\mathcal R_j[w,w]/\varepsilon_j},
\end{eqnarray*}
then
\begin{eqnarray}
 \limsup_{j\to\infty}M_j
 \leq
 \sup_{\substack{
     0\ne w\in V_*\\
     \mathcal R_*[w,w]=0
 }}
 \frac{l_*(w)^2}{\mathcal C_*[w,w]}.
 \label{eq:moving-penalty}
\end{eqnarray}
If ~\(\ker(\mathcal R_*|_{V_*})=\{0\}\), the right-hand side is
understood to be \(0\).
\end{lemma}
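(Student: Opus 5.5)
We argue by contradiction and compactness on normalized near-maximizers. Passing to a subsequence, we may assume that \(M_j\) converges to \(\limsup_j M_j=:L\), and suppose that \(L\) exceeds the right-hand side of \eqref{eq:moving-penalty}. In particular \(L>0\), possibly \(L=+\infty\). For each \(j\) we pick \(w_j\in V_j\) with \(\|w_j\|=1\) such that the quotient at \(w_j\) is at least \(\min\{M_j,j\}-1/j\). Compactness of the unit sphere in \(E\) gives a further subsequence with \(w_j\to w_*\), where \(\|w_*\|=1\). Since \(u_j\cdot w_j=0\) and \(u_j\to u_*\), the limit satisfies \(u_*\cdot w_*=0\), so \(w_*\in V_*\).

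The key step is to show that \(\mathcal R_*[w_*,w_*]=0\). The numerators \(l_j(w_j)^2\) are bounded, since \(l_j\to l_*\) and \(\|w_j\|=1\). For large \(j\) the quotient at \(w_j\) is bounded below by some \(c>0\), because \(L>0\). Hence
\[
\mathcal C_j[w_j,w_j]+\frac{1}{\varepsilon_j}\mathcal R_j[w_j,w_j]\le \frac{l_j(w_j)^2}{c}\le C.
\]
The term \(\mathcal C_j[w_j,w_j]\) is bounded, because \(\mathcal C_j\to\mathcal C_*\). Therefore \(0\le \mathcal R_j[w_j,w_j]\le \varepsilon_j\bigl(C+|\mathcal C_j[w_j,w_j]|\bigr)\to0\), where we used \(\mathcal R_j\ge0\) on \(V_j\). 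Passing to the limit, \(\mathcal R_*[w_*,w_*]=0\). Moreover \(\mathcal C_*[w_*,w_*]>0\), since \(\mathcal C_*>0\) on \(V_*\) and \(w_*\neq0\).

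Next we bound the quotient from above. Dropping the nonnegative penalty term \(\mathcal R_j[w_j,w_j]/\varepsilon_j\) gives
\[
\frac{l_j(w_j)^2}{\mathcal C_j[w_j,w_j]+\mathcal R_j[w_j,w_j]/\varepsilon_j}\le \frac{l_j(w_j)^2}{\mathcal C_j[w_j,w_j]}\longrightarrow \frac{l_*(w_*)^2}{\mathcal C_*[w_*,w_*]}.
\]
Here we used that \(\mathcal C_j[w_j,w_j]\to\mathcal C_*[w_*,w_*]>0\), so the denominators are positive for large \(j\). The left side tends to \(L\), so \(L\) is finite. Moreover \(L\) is at most the value of the right-hand side of \eqref{eq:moving-penalty} at an admissible \(w_*\). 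This contradicts our assumption. In the case \(\ker(\mathcal R_*|_{V_*})=\{0\}\), the same argument produces a nonzero \(w_*\) in that kernel, which is impossible, so \(L=0\).

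The main obstacle is the step where \(L=+\infty\) might hold, because the truncation \(\min\{M_j,j\}\) must still give a uniform lower bound on the quotient. This is handled by choosing \(c>0\) independent of \(j\). The sign condition \(\mathcal R_j\ge0\) on the moving spaces \(V_j\) is exactly what is used here. Lower semicontinuity of the denominator along the moving subspaces is then automatic, because all forms converge and the vectors stay on the unit sphere.
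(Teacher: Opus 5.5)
Your proof is correct and is essentially the paper's argument: compactness of normalized near-maximizers, where the penalty forces $\mathcal R_j[w_j,w_j]\to0$ so that the limit lies in the kernel of $\mathcal R_*$, and dropping the penalty term bounds the quotient by the $\mathcal C$-quotient. The only difference is the normalization. You take $\|w_j\|=1$ and argue by contradiction to bound the denominators. The paper instead normalizes the denominator to $1$ and uses the uniform positive definiteness of $\mathcal C_j$ on $V_j$ to bound $w_j$, which also makes $M_j$ finite, at the cost of treating the case $w_*=0$ separately.
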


\begin{proof}
The positive definiteness of \(\mathcal C_*\) on \(V_*\), together
with the convergence assumptions, implies that \(\mathcal C_j\) is
uniformly positive definite on \(V_j\) for all sufficiently large
\(j\).

If \(\limsup_j M_j=0\), there is nothing to prove. Otherwise, we can choose \(\widehat w_j\in V_j\setminus\{0\}\) such that
\begin{eqnarray*}
 \frac{l_j(\widehat w_j)^2}
 {\displaystyle
  \mathcal C_j[\widehat w_j,\widehat w_j]
  +\frac1{\varepsilon_j}
   \mathcal R_j[\widehat w_j,\widehat w_j]}
 \geq
 M_j-\frac1j.
\end{eqnarray*}
By the zero-homogeneity of this quotient in \(\widehat w_j\),
we may rescale it to obtain \(w_j\in V_j\) satisfying
\begin{eqnarray}
 \mathcal C_j[w_j,w_j]
 +\frac1{\varepsilon_j}\mathcal R_j[w_j,w_j]
 =
 1,
 \label{eq:moving-normalization}
\end{eqnarray}
and therefore
\begin{eqnarray}
 l_j(w_j)^2
 \geq
 M_j-\frac1j.
 \label{eq:moving-almost-max}
\end{eqnarray}

The uniform positive definiteness of \(\mathcal C_j\), together with
\eqref{eq:moving-normalization}, implies that \((w_j)\) is bounded.
After passing to a subsequence, let \(w_j\to w_*\in E\). Since
\(u_j\cdot w_j=0\), we obtain \(u_*\cdot w_*=0\), and hence
\(w_*\in V_*\).

On the other hand, since \(\mathcal R_j\geq0\) on \(V_j\), equation
\eqref{eq:moving-normalization} gives
\begin{eqnarray*}
 0\leq\mathcal R_j[w_j,w_j]
 \leq\varepsilon_j\longrightarrow0.
\end{eqnarray*}
Together with \(\mathcal R_j\to\mathcal R_*\) and \(w_j\to w_*\),
we have $\mathcal R_*[w_*,w_*]=0$. Thus \(w_*\in\ker\mathcal R_*\). Similarly, $\mathcal C_*[w_*,w_*]\leq1$.

If \(w_*=0\), then $l_j(w_j)\longrightarrow0$, and \eqref{eq:moving-almost-max} yields \(\limsup_jM_j=0\).

If \(w_*\ne0\), \eqref{eq:moving-almost-max} and the convergence
of \(l_j\) give 
\begin{eqnarray*}
 \limsup_{j\to\infty}M_j
 \leq
 l_*(w_*)^2
 \leq
 \frac{l_*(w_*)^2}{\mathcal C_*[w_*,w_*]}.
\end{eqnarray*}
Since \(w_*\in V_*\) and \(\mathcal R_*[w_*,w_*]=0\), this proves the lemma.
\end{proof}
We are now ready to prove the desired estimate for \(\Theta_k\).
By the definition of \(\Theta_k\), this estimate is equivalent to the
following upper bound for \(\mathcal I_k\).
\begin{proposition}\label{prop:boundary}
Suppose that
\begin{eqnarray*}
 \lambda^{(j)}\in\Gamma_k,
 \qquad
 \lambda_1^{(j)}=\max_i\lambda_i^{(j)}=1,
 \qquad
 F_j:=\sigma_k(\lambda^{(j)})\longrightarrow0.
\end{eqnarray*}
Then
\begin{eqnarray}
 \limsup_{j\to\infty}\mathcal I_k(\lambda^{(j)})
 \leq
 \frac{k-2}{k-1}.
 \label{eq:boundary-result}
\end{eqnarray}
\end{proposition}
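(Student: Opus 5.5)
The plan is to argue by compactness, using the energy characterization of Lemma~\ref{lem:energy-original} together with the penalty Lemma~\ref{lem:penalty}. Since \(\lambda_1^{(j)}=1\) and, by \eqref{n-k/k}, every component lies in \((-\tfrac{n-k}{k},1]\), we may pass to a subsequence with \(\lambda^{(j)}\to\lambda^*\in\overline{\Gamma}_k\). The limit satisfies \(\lambda_1^*=1\) and \(\sigma_k(\lambda^*)=0\). It suffices to bound \(\limsup\mathcal I_k\) along this subsequence.

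First choose the index \(i\geq2\). I would take \(i=n\), the index of the smallest component. The reason is that \(\lambda^*\) with \(\lambda_n\) deleted should lie in the open cone \(\Gamma_{k-1}\) of \(\mathbb R^{n-1}\), so that \(p_n(\lambda^*)>0\) and \(\mathcal C_n\) stays positive definite in the limit. Then \(\varepsilon_n=F_j/p_n\to0\).

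If this nondegeneracy fails (for instance \(\sigma_{k-1}(\lambda^*)=0\) as well), I would use Corollary~\ref{cor:coordinate-monotonicity}. Raising coordinates \(\lambda_2,\dots,\lambda_{n-1}\) only increases \(\mathcal I_k\), so an upper bound for a suitably raised configuration, still with \(F\to0\) and \(\lambda_1=1\) maximal, is also an upper bound for the original one. I would try to use this to reduce to the nondegenerate situation; a slight raise of the middle coordinates, compensated by lowering \(\lambda_n\), should restore it. Checking that this reduction can be organized so that the final bound is still \((k-2)/(k-1)\) is part of the work.

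In the nondegenerate case, apply Lemma~\ref{lem:penalty} with
\[
E=E_n,\qquad u_j=Dp_n(\lambda^{(j)}),\qquad l_j(w)=w_1,
\]
and with \(\mathcal C_j,\mathcal R_j\) the forms \eqref{eq:Ci-Ri} evaluated at \(\lambda^{(j)}\). The hypotheses of the lemma hold as follows. \(\mathcal R_j\geq0\) on \(V_j\) by concavity of \(\sigma_k/\sigma_{k-1}\), and the same inequality passes to the limit. \(\mathcal C_*>0\) on \(V_*\) by strict concavity of \(\sigma_{k-1}^{1/(k-1)}\) at an interior point of \(\Gamma_{k-1}\). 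Hence
\[
\limsup_j\mathcal I_k(\lambda^{(j)})\le p_n(\lambda^*)\sup\Bigl\{\frac{w_1^2}{\mathcal C_*[w,w]}:\ 0\ne w\in V_*,\ \mathcal R_*[w,w]=0\Bigr\}.
\]
Next identify \(\ker\mathcal R_*\). Since \(F(\lambda^*)=0\), we have \(\lambda_n^*=-q_n/p_n\), so on \(V_*\)
\[
D^2\sigma_k(\lambda^*)[w,w]=\lambda_n^*D^2p_n+D^2q_n=-\mathcal R_*[w,w].
\]
Thus the admissible \(w\) are exactly the null directions of the limiting Hessian \(H^*\) of \(\sigma_k\) inside \(V_*\), that is, directions tangent to the boundary \(\{\sigma_k=0\}\) along which \(\sigma_k\) is flat to second order.

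The main obstacle is the final sharp bound
\[
p_n\,w_1^2\le\frac{k-2}{k-1}\,\bigl(-D^2p_n[w,w]\bigr)
\]
for such \(w\), with \(\lambda_1^*=1\) maximal. I would first try to make the extra condition \(H^*[w,w]=0\) explicit. Combined with \(Dp_n\cdot w=0\) and \(\sigma_k(\lambda^*)=0\), it should force \(w\) to be supported on the coordinates where \(\lambda^*\) is "active," and pin down its shape. Then I would apply a Newton–Maclaurin type inequality for \(\sigma_{k-1}\) in the coordinates of \(\lambda^*|n\), isolating the \(w_1\) component. Specifically, the \(w_1\) component enters \(-D^2\sigma_{k-1}\) through \(\sigma_{k-3}(\lambda|1n)\), and \(\lambda_1=1\) bounds \(p_n\) against this. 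The constant \((k-2)/(k-1)=1-\tfrac1{k-1}\) is exactly the defect in the concavity of \(\sigma_{k-1}^{1/(k-1)}\), which suggests that the extremal configuration is the one where equality holds in that concavity. If the general computation resists, I would test it first on the extremal configuration \(\lambda^*=(1,\dots,1,t,\dots)\) to locate equality and guide the general inequality.
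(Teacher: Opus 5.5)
Your nondegenerate case $\sigma_{k-1}(\lambda^*)>0$ is the paper's Case~1: the smallest index, Lemma~\ref{lem:penalty}, and $\ker\mathcal R_*$ read as the null directions of the limiting Hessian. The sharp bound you call the main obstacle becomes short once you split on the sign of $\lambda^*_n$.
\begin{itemize}
\item If $\lambda^*_n<0$, then $\lambda^*|n\in\Gamma_k$ and $\mathcal R_*$ is positive definite on $V_*$, since the only null direction of $D^2(q_n/p_n)$ is radial and not in $V_*$. Hence $\mathcal I_k\to0$.
\item If $\lambda^*_n=0$, then $\lambda^*\ge0$ has exactly $k-1$ positive entries. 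The kernel consists of the $w$ vanishing on the zero coordinates with $\sum_a w_a/\lambda^*_a=0$. Setting $x_a=w_a/\lambda^*_a$ gives $\mathcal C_*[w,w]=-2p_*\sigma_2(x)=p_*|x|^2$ and $w_1=x_1$. The bound is then just $x_1^2\le\frac{k-2}{k-1}|x|^2$ on $\{\sigma_1(x)=0\}\subset\mathbb R^{k-1}$, by Cauchy--Schwarz; no Newton--Maclaurin is needed.
\end{itemize}

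The genuine gap is the degenerate case $\sigma_{k-1}(\lambda^*)=0$, where your reduction fails as stated.
\begin{itemize}
\item Corollary~\ref{cor:coordinate-monotonicity} holds for every index $i\ge2$, including $i=n$. So lowering $\lambda_n$ \emph{decreases} $\mathcal I_k$, and you lose the comparison $\mathcal I_k(\lambda^{(j)})\le\mathcal I_k(\text{modified})$.
\item A slight raise, by amounts tending to $0$, leaves the limit equal to $\lambda^*$, which is still degenerate.
\end{itemize}

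Two ingredients are missing. First, you need the structure of $\lambda^*$. The paper shows $\lambda^*\ge0$: a negative entry $-s$ would, via $\sigma_l(\lambda^*)=\sigma_l(\mu)-s\sigma_{l-1}(\mu)$ iterated down from $\sigma_{k-1}(\mu)=0$, force $\sigma_1(\mu)=0$ and hence $\sigma_1(\lambda^*)<0$. So $\lambda^*$ has only $1\le r\le k-2$ positive entries. Second, you need a way to reach a nondegenerate limit without losing $F\to0$.

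The paper does this by raising all vanishing coordinates to $\delta_j=\max_{I_0}\lambda^{(j)}_\alpha\to0$. It then blows up via $f_\delta(x,y)=\delta^{-(k-r)}\sigma_k(x,\delta y)\to\sigma_r(x)\sigma_{k-r}(y)$, uses the scale invariance of $\mathcal I_k$ under this rescaling, and inverts the limiting Hessian explicitly to get exactly $\frac{k-2}{k-1}$.

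Alternatively, your monotonicity idea can be repaired once the first ingredient is in hand. Raise $k-1-r$ of the vanishing coordinates by a \emph{fixed} amount, say $\tfrac12$, with no compensation. Monotonicity then gives $\mathcal I_k(\lambda^{(j)})\le\mathcal I_k(\widetilde\lambda^{(j)})$, and $\widetilde\lambda_1^{(j)}=1$ remains maximal for large $j$. The raised limit is nonnegative with exactly $k-1$ positive entries. Hence $\sigma_k$ still vanishes there, so $F\to0$ persists, while $\sigma_{k-1}>0$. This places you in the subcase $\lambda^*_n=0$ above, which gives the bound $\frac{k-2}{k-1}$.
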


\begin{proof}
Since all coordinates are bounded, up to a subsequence, we may assume that
\begin{eqnarray*}
 \lambda^{(j)}\longrightarrow
 \lambda^*\in\overline{\Gamma}_k,
 \qquad
 \sigma_k(\lambda^*)=0.
\end{eqnarray*}
It suffices to prove the asserted upper bound for $\lambda^{(j)}$.

\medskip
\noindent
\textbf{Case 1: \(\sigma_{k-1}(\lambda^*)>0\).}

For each \(j\), choose an index at which \(\lambda_i^{(j)}\) is
minimal among \(i\in\{2,\ldots,n\}\). After passing to a further
subsequence, we may assume that this index is fixed, denoted by
\(i_0\geq2\). Set
\begin{eqnarray*}
 t_j:=\lambda_{i_0}^{(j)}
 \longrightarrow t_*:=\lambda_{i_0}^*.
\end{eqnarray*}
Since \(\sigma_k(\lambda^*)=0\), we have \(t_*\leq0\).

For simplicity, set
\begin{eqnarray*}
 p_j:=p_{i_0}(\lambda^{(j)}),
 \qquad
 p_*:=p_{i_0}(\lambda^*),
 \qquad
 q_j:=q_{i_0}(\lambda^{(j)}),
 \qquad
 \varepsilon_j:=\frac{F_j}{p_j}.
\end{eqnarray*}
Since
\begin{eqnarray*}
 \sigma_{k-1}(\lambda^*)
 =
 p_*+t_*\sigma_{k-2}(\lambda^*| i_0)>0,
\end{eqnarray*}
we have \(p_*>0\), and hence \(\varepsilon_j\to0^+\).

\medskip
\noindent
\textbf{Subcase 1.1: \(t_*<0\).}
In this subcase, \(\lambda^*| i_0\in\Gamma_k\).

On \(V_{i_0}^*\), we have
\begin{eqnarray*}
 \mathcal R_{i_0}^*
 =
 -p_*D^2\!\left(\frac{q_{i_0}}{p_{i_0}}\right).
\end{eqnarray*}
Restricted to
\(E_{i_0}=\{w\in\mathbb R^n:w_{i_0}=0\}\), the Hessian
\(D^2(q_{i_0}/p_{i_0})(\lambda^*)\) has a one-dimensional kernel
spanned by the radial direction
\(\lambda^*-t_*e_{i_0}\). Since
\(Dp_{i_0}(\lambda^*)\cdot(\lambda^*-t_*e_{i_0})
=(k-1)p_*>0\), this direction does not belong to \(V_{i_0}^*\).
Therefore, \(\mathcal R_{i_0}^*\) is positive definite on
\(V_{i_0}^*\). If \(M_j\) denotes the corresponding supremum, 
Lemma~\ref{lem:penalty} then gives \(M_j\to0\). Since
\(p_j\to p_*>0\), it follows that
\begin{eqnarray*}
 \mathcal I_k(\lambda^{(j)})
 =
 p_jM_j\longrightarrow0.
\end{eqnarray*}

\medskip
\noindent
\textbf{Subcase 1.2: \(t_*=0\).}
Since \(t_j\) is the smallest coordinate among the indices
\(2,\ldots,n\), we have \(\lambda^*\geq0\). The conditions $\sigma_{k-1}(\lambda^*)>0$ and $\sigma_k(\lambda^*)=0$ imply that \(\lambda^*\) has exactly \(k-1\) positive coordinates.
Keep the zero coordinate \(i_0\geq2\) fixed and work on the coordinate
subspace \(E_{i_0}\). Write
\begin{eqnarray*}
 I_+:=\{a:\lambda_a^*>0\},
 \qquad
 I_0:=\{1,\ldots,n\}\setminus I_+,
 \qquad
 I_1:=I_0\setminus\{i_0\}.
\end{eqnarray*}
Then \(|I_+|=k-1\). Let \(\mu^*\in\mathbb R^{k-1}\) be any
ordering of these \(k-1\) positive numbers. Since only symmetric functions of \(\mu^*\) appear below, none of the
subsequent expressions depends on the chosen ordering. For
any \(a\ne b\in I_+\) and \(\alpha\ne\beta\in I_1\), a direct
calculation yields
\begin{eqnarray*}
 p_{i_0}(\lambda^*)
 =\sigma_{k-1}(\mu^*),\qquad
 \partial_a p_{i_0}(\lambda^*)
 =\sigma_{k-2}(\mu^*| a),\qquad
 \partial_\alpha p_{i_0}(\lambda^*)
 =\sigma_{k-2}(\mu^*).
\end{eqnarray*}
Consequently, \(w\in V_{i_0}^*\) if and only if
\begin{eqnarray}
 0
 =
 Dp_{i_0}(\lambda^*)\cdot w =
 \sum_{a\in I_+}\sigma_{k-2}(\mu^*| a)w_a
 +\sigma_{k-2}(\mu^*)\sum_{\alpha\in I_1}w_\alpha.
 \label{winVi0*}
\end{eqnarray}

We next compute \(\mathcal R_{i_0}^*[w,w]\) for
\(w\in V_{i_0}^*\). 

Since
\(q_{i_0}(\lambda^*)=\sigma_k(\lambda^*|i_0)=0\), we have
\(\mathcal R_{i_0}^*=-D^2q_{i_0}\) at this point. Moreover, its only nonzero mixed second
derivatives are
\begin{eqnarray*}
 \partial_{a\alpha}q_{i_0}(\lambda^*)
 =
 \sigma_{k-2}(\mu^*| a),
 \qquad
 \partial_{\alpha\beta}q_{i_0}(\lambda^*)
 =\sigma_{k-2}(\mu^*).
\end{eqnarray*}
Therefore,
\begin{eqnarray*}
 D^2q_{i_0}[w,w]
 &=&
 2\sum_{a\in I_+}\sigma_{k-2}(\mu^*| a)w_a
 \sum_{\alpha\in I_1}w_\alpha
 +\sum_{\alpha\ne\beta\in I_1}
 \sigma_{k-2}(\mu^*)w_\alpha w_\beta \\
 &=&
 \sigma_{k-2}(\mu^*)
 \left(
 -2\left(\sum_{\alpha\in I_1}w_\alpha\right)^2
 +\sum_{\alpha\ne\beta\in I_1}w_\alpha w_\beta
 \right) \\
 &=&
 -\sigma_{k-2}(\mu^*)
 \left(
 \sum_{\alpha\in I_1}w_\alpha^2
 +\left(\sum_{\alpha\in I_1}w_\alpha\right)^2
 \right),
\end{eqnarray*}
where the second equality follows from \eqref{winVi0*}. Hence
\(w\in\ker\mathcal R_{i_0}^*\) if and only if
\(w_\alpha=0\) for every \(\alpha\in I_1\) and
\begin{eqnarray}
 \sum_{a\in I_+}\sigma_{k-2}(\mu^*| a)w_a=0.
 \label{ker-old}
\end{eqnarray}
For convenience, define \(x\in\mathbb R^{k-1}\) by
\(x_a:=w_a/\lambda_a^*\). Then the kernel condition
\eqref{ker-old} is equivalent to
\begin{eqnarray}
 \sum_{a\in I_+}x_a=\sigma_1(x)=0.
 \label{ker-new}
\end{eqnarray}
Since \(w_\alpha\equiv0\) on the kernel, a direct calculation gives
\begin{eqnarray*}
 \mathcal C_{i_0}^*[w,w]
 =
 -D^2p_{i_0}[w,w]
 =-2\sigma_{k-1}(\mu^*)\sigma_2(x)
 =-2p_*\sigma_2(x).
\end{eqnarray*}
Moreover, \(\lambda_1^*=1\), and hence \(w_1=x_1\). The
Cauchy--Schwarz inequality yields
\begin{eqnarray}
 p_*
 \sup_{0\ne w\in\ker\mathcal R_{i_0}^*}
 \frac{w_1^2}{\mathcal C_{i_0}^*[w,w]}
 \leq
 \frac{k-2}{k-1},
 \label{rank=k-1}
\end{eqnarray}
with equality when \(x_a=-x_1/(k-2)\) for every
\(a\in I_+\setminus\{1\}\). The desired conclusion follows from
Lemma~\ref{lem:penalty}.

\medskip
\noindent
\textbf{Case 2: \(\sigma_{k-1}(\lambda^*)=0\).}

We first claim that \(\lambda^*\geq0\). Suppose to the contrary that
\(\lambda_{i_0}^*=-s<0\) and set
\(\mu=\lambda^*| i_0\). Then \(\mu\in\overline{\Gamma}_{k-1}\). Since
\begin{eqnarray*}
    \sum_{i=1}^n\sigma_{k-1}(\lambda^*|i)
    =
    (n-k+1)\sigma_{k-1}(\lambda^*)=0,
\end{eqnarray*}
we have
\(\sigma_{k-1}(\mu)=0\).

If \(\sigma_l(\mu)=0\) for some \(2\leq l\leq k-1\), then
\begin{eqnarray*}
    0
    \leq
    \sigma_l(\lambda^*)
    =
    \sigma_l(\mu)-s\sigma_{l-1}(\mu)
    =
    -s\sigma_{l-1}(\mu)
    \leq0.
\end{eqnarray*}
Hence \(\sigma_{l-1}(\mu)=0\). By iteration, we obtain
\(\sigma_1(\mu)=0\). Consequently,
\begin{eqnarray*}
    \sigma_1(\lambda^*)
    =
    \sigma_1(\mu)-s=-s<0,
\end{eqnarray*}
which is a contradiction. Therefore
\(\lambda^*\geq0\).

Set
\begin{eqnarray*}
 I_+:=\{a:\lambda_a^*>0\},
 \qquad
 I_0:=\{1,\ldots,n\}\setminus I_+,
 \qquad
 r:=|I_+|.
\end{eqnarray*}
Since \(\lambda_1^*=1\) and \(\sigma_{k-1}(\lambda^*)=0\), we have
\begin{eqnarray*}
 1\leq r\leq k-2.
\end{eqnarray*}
Define
\begin{eqnarray}
 \delta_j:=\max_{\alpha\in I_0}\lambda_\alpha^{(j)}.
 \label{eq:delta}
\end{eqnarray}
Each \(\lambda^{(j)}\) has at least \(k\) positive coordinates,
whereas \(I_+\) has only \(r<k\) indices. Hence \(\delta_j>0\) and \(\delta_j\to0^+\).

Increase each coordinate indexed by \(I_0\) to \(\delta_j\), while leaving the remaining coordinates unchanged, and
define
\begin{eqnarray*}
 \widetilde\lambda_a^{(j)}
 =
 \lambda_a^{(j)}
 \quad(a\in I_+),
 \qquad
 \widetilde\lambda_\alpha^{(j)}
 =\delta_j
 \quad(\alpha\in I_0).
\end{eqnarray*}
By the monotonicity corollary \ref{cor:coordinate-monotonicity},
\begin{eqnarray}
 \mathcal I_k(\lambda^{(j)})
 \leq
 \mathcal I_k(\widetilde\lambda^{(j)}).
 \label{eq:raise}
\end{eqnarray}

For \(x=(x_a)_{a\in I_+}\) and
\(y=(y_\alpha)_{\alpha\in I_0}\), define
\begin{eqnarray}
 f_\delta(x,y)
 :=
 \delta^{-(k-r)}\sigma_k(x,\delta y)
 =
 \sum_{s=0}^{r}
 \delta^{r-s}\sigma_s(x)\sigma_{k-s}(y).
 \label{eq:Phi-delta}
\end{eqnarray}
A direct calculation gives
\begin{eqnarray}
 \mathcal I_k(x,\delta y)
 =
 -f_\delta(x,y)
 \left[
 \left(D^2f_\delta(x,y)\right)^{-1}
 \right]_{11}
 .
 \label{eq:scale-invariance}
\end{eqnarray}
Thus, by passing from \(\mathcal I_k\) to \(f_\delta\), we replace
the degenerating variables \((x,\delta y)\) with the nondegenerate
variables \((x,y)\). Moreover, as \(\delta\to0\), \(f_\delta\)
converges locally in \(C^2\) to
\begin{eqnarray}
 f_*(x,y)
 :=
 \sigma_r(x)\sigma_{k-r}(y).
 \label{eq:Phi-limit}
\end{eqnarray}

For \(\widetilde\lambda^{(j)}\), we have
\(y=\mathbf 1_{I_0}\) and
\(x_a=\lambda_a^{(j)}\to\lambda_a^*\). As shown below, the limiting
Hessian is invertible at this point; hence, by local \(C^2\)
convergence, the inverses of the Hessians also converge. It remains to
prove
\begin{eqnarray}
 \lim_{j\to\infty}\mathcal I_k(\widetilde\lambda^{(j)})
 =
 \frac{k-2}{k-1}(\lambda_1^*)^2
 =\frac{k-2}{k-1}.
 \label{keyineq-r<k-1}
\end{eqnarray}
Define the matrix
\begin{eqnarray*}
    A:=\frac{D^2f_*}{f_*}.
\end{eqnarray*}
For the computations below, we fix
\(y=\mathbf 1_{I_0}\). A direct calculation
gives
\begin{eqnarray*}
 A_{aa}=0,
 \qquad
 A_{ab}=\frac1{x_ax_b}
 \quad(a\ne b,\ a,b\in I_+),\qquad A_{a\alpha}
 =
 \frac{k-r}{(n-r)x_a}
 \quad(a\in I_+,\ \alpha\in I_0),
\end{eqnarray*}
and
\begin{eqnarray*}
 A_{\alpha\alpha}=0,
 \qquad
 A_{\alpha\beta}
 =
 \frac{(k-r)(k-r-1)}{(n-r)(n-r-1)}
 \quad(\alpha\ne\beta,\ \alpha,\beta\in I_0).
\end{eqnarray*}

We first prove that \(A\) is invertible. Suppose that
\begin{eqnarray*}
    A(u,z)=0,
\end{eqnarray*}
where
\begin{eqnarray*}
    u=(u_a)_{a\in I_+},
    \qquad
    z=(z_\alpha)_{\alpha\in I_0},
\end{eqnarray*}
and set
\begin{eqnarray*}
    s:=\sum_{a\in I_+}\frac{u_a}{x_a}.
\end{eqnarray*}
Subtracting any two \(I_0\)-component equations shows that all
\(z_\alpha\) have a common value, denoted by \(c\). Each
\(I_0\)-component equation then reduces to
\begin{eqnarray*}
    s+(k-r-1)c=0.
\end{eqnarray*}
On the other hand, the \(a\)-th \(I_+\)-component equation gives
\begin{eqnarray*}
    \frac{u_a}{x_a}
    =
    s+(k-r)c
    =
    c.
\end{eqnarray*}
Summing over \(a\in I_+\), we obtain
\begin{eqnarray*}
    s=rc.
\end{eqnarray*}
Comparing this identity with
\(s=-(k-r-1)c\) yields \(c=0\), and consequently \(z=0\), \(s=0\), and \(u=0\).
Therefore \(\ker A=\{0\}\), so \(A\) is invertible.

We next compute \((A^{-1})_{11}\). Let
\begin{eqnarray*}
    (u,z)=A^{-1}e_1.
\end{eqnarray*}
Applying the same argument as above, we find that all
\(z_\alpha\) have a common value \(c\) and that
\begin{eqnarray*}
    (k-1)c=x_1,
    \qquad
    \frac{u_a}{x_a}
    =
    c-x_a\delta_{a1}.
\end{eqnarray*}
In particular,
\begin{eqnarray*}
    u_1
    =
    x_1(c-x_1)
    =
    -\frac{k-2}{k-1}x_1^2.
\end{eqnarray*}
Since \(u_1=(A^{-1})_{11}\), we conclude that
\begin{eqnarray*}
    (A^{-1})_{11}
    =
    -\frac{k-2}{k-1}x_1^2.
\end{eqnarray*}
Finally, \(D^2f_*=f_*A\), and hence
\begin{eqnarray*}
    -f_*
    \left[
        \left(D^2f_*^{-1}\right)_{11}
    \right]
    =
    -(A^{-1})_{11}
    =
    \frac{k-2}{k-1}x_1^2.
\end{eqnarray*}
This proves \eqref{keyineq-r<k-1}.
\end{proof}

\begin{proposition}\label{prop:shape-modulus}
For each $\varepsilon>0$, there exists a small constant $\eta=\eta(n,k,\varepsilon)>0$ such that whenever 
$$
F=\sigma_k(\lambda)<\eta,
$$
we have
\begin{eqnarray}\label{eq:shape-modulus}
    \Theta_k(\lambda)
    \geq
    \frac{1}{
        (k-1)^2+\varepsilon
    }.
\end{eqnarray}
\end{proposition}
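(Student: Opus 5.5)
Under the standing normalization \(a=\lambda_1=1>\lambda_2\geq\cdots\geq\lambda_n\) with \(\lambda\in\Gamma_k\), we argue by contradiction and compactness, reducing the claim to Proposition~\ref{prop:boundary}.

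\textbf{Reduction to a bound on \(\mathcal I_k\).} By \eqref{def-IkThetak}, \(\Theta_k=1/(1+k(k-1)\mathcal I_k)\). Hence \eqref{eq:shape-modulus} is equivalent to
\begin{eqnarray*}
    1+k(k-1)\mathcal I_k(\lambda)\leq (k-1)^2+\varepsilon,
\end{eqnarray*}
that is, to \(\mathcal I_k(\lambda)\leq \frac{(k-1)^2-1+\varepsilon}{k(k-1)}=\frac{k-2}{k-1}+\frac{\varepsilon}{k(k-1)}\). The leading constant is exactly the bound in \eqref{eq:boundary-result}.

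\textbf{Contradiction argument.} Suppose the statement fails for some \(\varepsilon>0\). Then there are \(\lambda^{(j)}\in\Gamma_k\), normalized with \(\lambda^{(j)}_1=\max_i\lambda^{(j)}_i=1\), such that \(F_j=\sigma_k(\lambda^{(j)})<1/j\), yet
\begin{eqnarray*}
    \mathcal I_k(\lambda^{(j)})>\frac{k-2}{k-1}+\frac{\varepsilon}{k(k-1)}.
\end{eqnarray*}
Since \(F_j\to0\), these are exactly the hypotheses of Proposition~\ref{prop:boundary}, which gives \(\limsup_j\mathcal I_k(\lambda^{(j)})\leq\frac{k-2}{k-1}\). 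This contradicts the strict lower bound, so some \(\eta(n,k,\varepsilon)\) works.

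\textbf{Remaining checks.} Two points need care.

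\emph{Normalization.} Both \(\mathcal I_k\) and \(\Theta_k\) are invariant under the scaling \(\lambda\mapsto t\lambda\). Indeed, \(F\) has degree \(k\) and \(H^{-1}\) has degree \(-(k-2)\), so \(\mathcal I_k=-F(H^{-1})_{11}\) has degree \(2\) and scales by \(a^2\). Therefore the smallness condition must be read as \(F/a^k<\eta\), consistent with the statement under \(a=1\). In the unnormalized setting the conclusion becomes \(\Theta_k\) computed for \(\lambda/a\).

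\emph{Compactness.} The coordinates lie in the bounded set \((-(n-k)/k,1]\) by \eqref{n-k/k}, and the subsequence extraction is already carried out inside Proposition~\ref{prop:boundary}.

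\textbf{Main obstacle.} The substantive work, namely the uniform limsup bound via the energy characterization and the degenerate limits, is already contained in Proposition~\ref{prop:boundary}. The only real step here is confirming that the quantitative threshold matches, i.e.\ the algebra \(1+k(k-1)\frac{k-2}{k-1}=1+k(k-2)=(k-1)^2\). This shows the constant \((k-1)^2\) is exactly the limiting value.
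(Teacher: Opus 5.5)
Your proposal follows the paper's route for $k\geq3$: the sequence-and-contradiction argument is just the sequential form of the paper's claim that $\mathcal I_k(\lambda)\leq\frac{k-2}{k-1}+o(1)$ uniformly over normalized $\lambda$ as $F\downarrow0$, and the arithmetic $1+k(k-2)=(k-1)^2$ is the same.

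\textbf{The case $k=2$.} The paper does not invoke Proposition~\ref{prop:boundary} here. It uses the exact identity $\mathcal I_2=\frac{n-2}{n-1}F$, which follows from $H=J-I$ ($J$ the all-ones matrix). Then $H^{-1}=-I+\frac{1}{n-1}J$, so $(H^{-1})_{11}=-\frac{n-2}{n-1}$ and $\Theta_2\to1>\frac{1}{1+\varepsilon}$.

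The reason for avoiding Proposition~\ref{prop:boundary} is that its proof goes through Lemma~\ref{lem:penalty}. That lemma needs $\mathcal C_*=-D^2p_{i_0}$ to be positive definite on $V_*$. For $k=2$, $p_i=\sigma_1(\lambda|i)$ is linear, so $\mathcal C_i\equiv0$ and that hypothesis fails.

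The conclusion of Proposition~\ref{prop:boundary} is still true at $k=2$, so your citation is not false. It does, however, rest on a case whose proof in the paper does not apply. Add the one-line computation above to close this.

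\textbf{Scaling remark.} In your normalization paragraph, drop the assertion that $\mathcal I_k$ and $\Theta_k$ are scale-invariant. As you correctly compute in the next sentence, $\mathcal I_k$ is homogeneous of degree $2$, so $\Theta_k$ is not invariant. That is exactly why the proposition is read under the standing normalization $\lambda_1=1$.
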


\begin{proof}
When \(k=2\), we have the exact identity
\begin{eqnarray}\label{eq:Delta2-exact}
    \mathcal I_2
    =
    \frac{n-2}{n-1}F.
\end{eqnarray}
The assertion follows.

For \(k\geq3\), Proposition~\ref{prop:boundary} implies that, for all
normalized \(\lambda\in\Gamma_k\),
\begin{eqnarray*}
    \mathcal I_k(\lambda)
    \leq
    \frac{k-2}{k-1}+o(1)
    \qquad\text{as }F\downarrow0.
\end{eqnarray*}
Therefore, for every \(\varepsilon>0\), if \(F\) is sufficiently
small, then
\begin{eqnarray*}
    \Theta_k(\lambda)^{-1}
    =
    1+k(k-1)\mathcal I_k(\lambda)
    \leq
    (k-1)^2+\varepsilon.
\end{eqnarray*}
This proves Proposition \ref{prop:shape-modulus}.
\end{proof}
Based on Proposition~\ref{prop:shape-modulus}, we are now ready
to prove Theorem~\ref{thm-crucial-ineq} in the easy region.

Recall the quadratic form
\begin{eqnarray*}
\mathcal Q_{\gamma}(\lambda;\xi)
=
\frac{2}{F}\left(\sum_{i=1}^n F^{ii}\xi_i\right)^2
-\gamma P\xi_1^2
-H[\xi,\xi]
+2\sum_{p=2}^n
\frac{F^{pp}}{1-\lambda_p}\,\xi_p^2.   
\end{eqnarray*}
\begin{lemma}\label{lem:easy}
There exists a small constant $\eta_{\mathrm{easy}}=\eta_{\mathrm{easy}}(n,k,\gamma)>0$ such that, whenever
\begin{eqnarray*}
    0<F\leq\eta_{\mathrm{easy}},
    \qquad
    \frac{P}{F}
    \leq
    1+\frac{n}{2k^2},
\end{eqnarray*}
we have
\begin{eqnarray}
\label{eq:easy-region}
    \mathcal Q_{\gamma}(\lambda;\xi)\geq0
    \qquad \forall \xi\in\mathbb R^n.
\end{eqnarray}
\end{lemma}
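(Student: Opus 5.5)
The plan is to feed the constrained concavity estimate of Proposition~\ref{prop:exact-concavity} directly into $\mathcal Q_\gamma$ and then reduce everything to a one-variable minimization in $\xi_1$. No G{\aa}rding-root machinery is needed here; the smallness of $F$ enters only through the lower bound on $\Theta_k$ from Proposition~\ref{prop:shape-modulus}.

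\textbf{Step 1: drop the last term.} Since $\lambda\in\Gamma_k$, we have $F^{pp}=\sigma_{k-1}(\lambda|p)>0$. Since $\lambda_p<1=a$ for $p\ge2$, the last sum in $\mathcal Q_\gamma$ is nonnegative, so it can be discarded.

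\textbf{Step 2: apply Proposition~\ref{prop:exact-concavity}.} Set $S:=\sum_i F^{ii}\xi_i=g\cdot\xi$. Take $x=\xi$, $b=\xi_1$ and $D=-S$, so that $g\cdot x+D=0$. Then
\begin{eqnarray*}
-H[\xi,\xi]\geq -\frac{k-1}{kF}S^2+k(k-1)F\Theta_k\Big(\xi_1-\frac{S}{kF}\Big)^2 .
\end{eqnarray*}
Write $t:=S/(kF)$. Combining this with $\frac{2}{F}S^2$ gives
\begin{eqnarray*}
\mathcal Q_\gamma\geq k(k+1)F\,t^2+k(k-1)F\Theta_k(\xi_1-t)^2-\gamma P\xi_1^2 .
\end{eqnarray*}

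\textbf{Step 3: minimize over $t$.} Put $A=k(k+1)F$ and $B=k(k-1)F\Theta_k$. For fixed $\xi_1$, the minimum over $t\in\mathbb R$ of $At^2+B(\xi_1-t)^2$ equals $\frac{AB}{A+B}\xi_1^2$. Hence it suffices to show
\begin{eqnarray*}
kF\,\frac{(k+1)(k-1)}{(k+1)\Theta_k^{-1}+(k-1)}\geq\gamma P .
\end{eqnarray*}
The left side is increasing in $\Theta_k$.

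\textbf{Step 4: use smallness of $F$ and the easy-region hypothesis.} By Proposition~\ref{prop:shape-modulus}, for $F<\eta(n,k,\varepsilon)$ we have $\Theta_k^{-1}\le (k-1)^2+\varepsilon$. At $\varepsilon=0$ the left side above equals
\begin{eqnarray*}
kF\frac{(k+1)(k-1)}{(k+1)(k-1)^2+(k-1)}=\frac{k+1}{k}F .
\end{eqnarray*}
Using $P\le(1+\frac{n}{2k^2})F$, the required inequality holds provided
\begin{eqnarray*}
\gamma\Big(1+\frac{n}{2k^2}\Big)<\frac{k+1}{k},
\end{eqnarray*}
with strict inequality leaving room for a small $\varepsilon>0$. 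This condition is equivalent to
\begin{eqnarray*}
\gamma<\frac{2k^2+2k}{2k^2+n}=1+\frac{2k-n}{2k^2+n},
\end{eqnarray*}
which is exactly one of the hypotheses on $\gamma$. Choose $\varepsilon=\varepsilon(n,k,\gamma)>0$ small enough that the inequality persists with $(k-1)^2+\varepsilon$ in place of $(k-1)^2$, and then set $\eta_{\mathrm{easy}}:=\eta(n,k,\varepsilon)$.

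\textbf{Expected difficulty.} The only delicate point is that the threshold $1+\frac{n}{2k^2}$ in the definition of the easy region is tuned exactly so that the optimal constant $(k-1)^2$ coming from Proposition~\ref{prop:boundary} closes the argument. Consequently, the estimate needs to be carried out sharply: in particular, minimizing over $t$ exactly rather than applying a lossy Cauchy--Schwarz. With that care, no genuine obstacle remains beyond the results already established.
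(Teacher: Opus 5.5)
Your proposal is correct and is essentially the paper's proof. The paper also applies Proposition~\ref{prop:exact-concavity} with $b=\xi_1$ and $D=-g\cdot\xi$, and it eliminates $g\cdot\xi$ exactly; it completes the square where you minimize over $t$, which gives the same coefficient $F\,\mathfrak c_k(\Theta_k)$ with $\mathfrak c_k\bigl(1/(k-1)^2\bigr)=(k+1)/k$. It then closes with Proposition~\ref{prop:shape-modulus}, the easy-region bound on $P/F$, and the hypothesis on $\gamma$, exactly as you do.

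One small fix: Proposition~\ref{prop:shape-modulus} requires the strict inequality $F<\eta$, so choose $\eta_{\mathrm{easy}}$ strictly smaller than $\eta(n,k,\varepsilon)$ to cover the case $F=\eta_{\mathrm{easy}}$. The paper uses $\eta_0/2$.
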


\begin{proof}
In Proposition~\ref{prop:exact-concavity}, take
\begin{eqnarray*}
    x=\xi,
    \qquad
    b=\xi_1,
    \qquad
    D=-g\cdot\xi=-\sum_{i=1}^n F^{ii}\xi_i.
\end{eqnarray*}
Since
\begin{eqnarray*}
    g\cdot\xi+D=0,
\end{eqnarray*}
Proposition~\ref{prop:exact-concavity} gives
\begin{eqnarray*}
    -H[\xi,\xi]
    \geq
    -\frac{k-1}{kF}(g\cdot\xi)^2
    +
    k(k-1)F\Theta_k
    \left(
        \xi_1-\frac{g\cdot\xi}{kF}
    \right)^2.
\end{eqnarray*}

For $\Theta>0$, define the function
\begin{eqnarray*}
    \mathfrak c_{k}(\Theta)
    :=
    \frac{
        k(k-1)(k+1)\Theta
    }{
        k+1+(k-1)\Theta
    }.
\end{eqnarray*}
Substituting the preceding estimate into \(\mathcal Q_\gamma\) and
completing the square with respect to \((g\cdot\xi)/F\), we obtain
\begin{eqnarray}\label{Qgamma-ck2}
\mathcal Q_\gamma(\lambda;\xi)
&\geq&
F\,
\frac{
    k+1+(k-1)\Theta_k
}{k}
\left[
    \frac{g\cdot\xi}{F}
    -
    \frac{
        k(k-1)\Theta_k
    }{
        k+1+(k-1)\Theta_k
    }\xi_1
\right]^2
\\
&&+
F\xi_1^2
\left[
    \mathfrak c_{k}(\Theta_k)
    -\gamma\frac{P}{F}
\right]
+
2\sum_{p=2}^n
\frac{F^{pp}}{1-\lambda_p}\,\xi_p^2.
\end{eqnarray}

The function \(\mathfrak c_{k}\) is continuous and strictly increasing
on \((0,\infty)\). Moreover, the assumption on \(\gamma\) gives
\begin{eqnarray*}
\gamma\left(1+\frac{n}{2k^2}\right)
<
\left(
    1+\frac{2k-n}{2k^2+n}
\right)
\left(
    1+\frac{n}{2k^2}
\right)
=
\frac{k+1}{k}
=
\mathfrak c_{k}
\left(
    \frac{1}{(k-1)^2}
\right).
\end{eqnarray*}
We may therefore choose \(\varepsilon_0>0\) sufficiently small such that
\begin{eqnarray}\label{ineq-ck2}
    \mathfrak c_{k}
    \left(
        \frac{1}{(k-1)^2+\varepsilon_0}
    \right)
    >
    \gamma\left(1+\frac{n}{2k^2}\right)\geq \gamma\frac{P}{F}.
\end{eqnarray}

Apply Proposition~\ref{prop:shape-modulus} with
\(\varepsilon=\varepsilon_0\), and let
\(\eta_0=\eta(n,k,\varepsilon_0)>0\) be the corresponding constant. Set $\eta_{\mathrm{easy}}:=\frac{\eta_0}{2}$. By \eqref{ineq-ck2}, the coefficient of \(F\xi_1^2\) in \eqref{Qgamma-ck2} is
positive. Consequently,
\begin{eqnarray*}
    \mathcal Q_\gamma(\lambda;\xi)\geq0.
\end{eqnarray*}
\end{proof}

\section{G{\aa}rding Root Coordinates: the Hard Region}\label{hard}

Throughout this section, we retain the normalization and
assume that the largest eigenvalue is simple:
\begin{eqnarray*}
    a=\lambda_1=1>\lambda_2\geq\cdots\geq\lambda_n,
\end{eqnarray*}
and work in the hard region
\begin{eqnarray*}
    \frac{P}{F}
    >
    1+\frac{n}{2k^2}.
\end{eqnarray*}
In this region, the optimal concavity estimate used in
Section~\ref{easy} is no longer sufficient, since the
coefficient of \(F\xi_1^2\) in \eqref{Qgamma-ck2} does not remain
nonnegative. To handle this case, we use a more detailed representation of the \(\sigma_k\)-level sets. More precisely, we first represent the level set as a graph over the remaining variables and then introduce G{\aa}rding root coordinates. In these coordinates, the target
quadratic form separates into an explicit root-variable part and a
remainder with a favorable sign, reducing the hard-region estimate to the positivity of a simpler quadratic form.

\subsection{Tail Coordinate Representation}

Fix \(\lambda\in\Gamma_k\) satisfying
\(a=\lambda_1=1>\lambda_i\) for \(2\leq i\leq n\), and denote its
tail vector by
\begin{eqnarray*}
    \mu
    :=
    (\lambda_2,\ldots,\lambda_n)\in\mathbb R^{n-1}.
\end{eqnarray*}
For \(2\leq i\leq n\), define
\begin{eqnarray}\label{eq:tail-coordinates}
    z_i
    :=
    \frac{1}{1-\lambda_i}>0,
    \qquad
    \lambda_i=1-\frac{1}{z_i}.
\end{eqnarray}
Viewing \(\mu\) as a function of
\(z=(z_2,\ldots,z_n)\), we write
\begin{eqnarray*}
    \mu(z)
    :=
    \left(
        1-\frac{1}{z_2},
        \ldots,
        1-\frac{1}{z_n}
    \right),
\end{eqnarray*}
and set
\begin{eqnarray*}
    P(z)
    :=
    \sigma_{k-1}(\mu(z)),
    \qquad
    Q(z):=\sigma_k(\mu(z)).
\end{eqnarray*}
We have \(P(z)>0\), and
\begin{eqnarray}\label{eq:first-coordinate-affine}
    \sigma_k(s,\mu(z))
    =
    sP(z)+Q(z).
\end{eqnarray}

It is convenient to parametrize the level \(F\) logarithmically by
setting
\begin{eqnarray}\label{eq:inverse-level-x}
    x
    :=
    \log\left(
        \frac{(n-1)!}{k!(n-k)!}\frac{1}{F}
    \right).
\end{eqnarray}
The normalizing constant and the logarithmic parametrization are introduced only to simplify
the G{\aa}rding-root formulas below; neither is essential to the
argument. Define the graphing function
\begin{eqnarray}\label{eq:finite-level-graph}
    \psi(x,z)
    :=
    \frac{
        \dfrac{(n-1)!}{k!(n-k)!}e^{-x}-Q(z)
    }{
        P(z)
    }.
\end{eqnarray}
It follows from \eqref{eq:first-coordinate-affine} that
\begin{eqnarray}\label{eq:finite-level-relation}
    \sigma_k\bigl(\psi(x,z),\mu(z)\bigr)=
    \frac{(n-1)!}{k!(n-k)!}e^{-x}.
\end{eqnarray}
For the fixed vector \(\lambda\), the right-hand side equals
\(F=P(z)+Q(z)\), and therefore
\begin{eqnarray}\label{eq:psi-equals-one}
    \psi(x,z)=1.
\end{eqnarray}
We may assume
that \(\psi>0\) in a small neighborhood, and define
\begin{eqnarray}\label{eq:def-phi-from-psi}
    \phi(x,z)
    :=
    \log\psi(x,z).
\end{eqnarray}
Thus \(\phi=0\) at the point corresponding to \(\lambda\).

The purpose of introducing the \((x,z)\)-coordinates is to express the
original quadratic form in terms of the second derivatives of the
level-set graph (see Lemma \ref{lem:finite-level-graph-identity}). To obtain this representation, we relate a
variation in the original \(\lambda\)-variables to its components in
the new coordinates. The resulting Hessian identity will serve as the
starting point for the G{\aa}rding-root analysis.

Let
\begin{eqnarray*}
    \xi
    :=
    (\xi_1,\ldots,\xi_n)\in\mathbb R^n
\end{eqnarray*}
be arbitrary, and consider the corresponding variation
\(\lambda+t\xi\) of \(\lambda\). Its components in the
\((x,z)\)-coordinates are obtained by differentiating the coordinate
functions:
\begin{eqnarray}\label{eq:finite-level-direction}
    \dot x
    &:=&
    \left.
    \frac{\partial}{\partial t}
    \log\left(
        \frac{(n-1)!}{k!(n-k)!}
        \frac{1}{\sigma_k(\lambda+t\xi)}
    \right)
    \right|_{t=0}
    =
    -\frac{1}{F}
    \sum_{i=1}^nF^{ii}\xi_i,
    \\
    w_i
    &:=&
    \left.
    \frac{\partial}{\partial t}
    \frac{1}{1-(\lambda_i+t\xi_i)}
    \right|_{t=0}
    =
    z_i^2\xi_i,
    \qquad 2\leq i\leq n.
    \nonumber
\end{eqnarray}
We write
\begin{eqnarray*}
    w
    :=
    (w_2,\ldots,w_n),
    \qquad
    X:=(\dot x,w).
\end{eqnarray*}
With this notation, we obtain the following exact Hessian
representation of \(\mathcal Q_\gamma\).
\begin{lemma}\label{lem:finite-level-graph-identity}
At the point \((x,z)\) corresponding to the fixed vector \(\lambda\),
where \(\psi(x,z)=1\), let \(\xi\in\mathbb R^n\) be arbitrary,
and let \(X\) be its representation in the \((x,z)\)-coordinates, as
defined in \eqref{eq:finite-level-direction}. Then
\begin{eqnarray}\label{eq:phi-first-direction}
    D\psi(x,z)[X]
    =
    D\phi(x,z)[X]
    =
    \xi_1.
\end{eqnarray}
Moreover,
\begin{eqnarray}\label{eq:finite-level-graph-identity-psi}
    \frac{\mathcal Q_\gamma(\lambda;\xi)}{P}
    =
    D^2\psi(x,z)[X,X]
    -\gamma\{D\psi(x,z)[X]\}^2
    +\frac{F}{P}\dot x^{\,2}.
\end{eqnarray}
Equivalently, for \(\phi=\log\psi\),
\begin{eqnarray}\label{eq:finite-level-graph-identity}
    \frac{\mathcal Q_\gamma(\lambda;\xi)}{P}
    =
    D^2\phi(x,z)[X,X]
    -(\gamma-1)\{D\phi(x,z)[X]\}^2
    +\frac{F}{P}\dot x^{\,2}.
\end{eqnarray}
\end{lemma}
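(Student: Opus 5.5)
The idea is to restrict everything to the straight line \(t\mapsto\lambda+t\xi\) and differentiate the exact relation it satisfies in the \((x,z)\)-coordinates twice. The second derivative will produce \(D^2\psi[X,X]\) plus a first-order term, and that first-order term can be evaluated explicitly.

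\textbf{Step 1: the identity along the line.} For small \(t\), set
\begin{eqnarray*}
    x(t):=\log\left(\frac{(n-1)!}{k!(n-k)!}\frac{1}{\sigma_k(\lambda+t\xi)}\right),
    \qquad
    z_i(t):=\frac{1}{1-\lambda_i-t\xi_i},\quad 2\leq i\leq n.
\end{eqnarray*}
By construction \(\mu(z(t))=(\lambda_2+t\xi_2,\ldots,\lambda_n+t\xi_n)\). Then \eqref{eq:first-coordinate-affine} gives
\begin{eqnarray*}
    \sigma_k(\lambda+t\xi)=(1+t\xi_1)P(z(t))+Q(z(t)).
\end{eqnarray*}
Comparing with \eqref{eq:finite-level-graph}, we get the exact identity
\begin{eqnarray*}
    \psi(x(t),z(t))=1+t\xi_1 .
\end{eqnarray*}
Its right-hand side is affine in \(t\). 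Moreover \((\dot x(0),\dot z(0))=X\) by \eqref{eq:finite-level-direction}.

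\textbf{Step 2: the first-derivative claim.} Differentiating the identity once at \(t=0\) gives \(D\psi[X]=\xi_1\). Since \(\psi=1\) at the base point, \(D\phi[X]=D\psi[X]/\psi=\xi_1\). This proves \eqref{eq:phi-first-direction}.

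\textbf{Step 3: differentiating twice.} This yields
\begin{eqnarray*}
    0=D^2\psi[X,X]+D\psi[\ddot X],\qquad \ddot X=(\ddot x,\ddot z).
\end{eqnarray*}
We compute each ingredient directly.

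\emph{Second derivatives of the coordinates.} With \(H=D^2\sigma_k(\lambda)\) and \(g=D\sigma_k(\lambda)\),
\begin{eqnarray*}
    \ddot x=-\frac{H[\xi,\xi]}{F}+\frac{(g\cdot\xi)^2}{F^2},
    \qquad
    \ddot z_i=2z_i^3\xi_i^2 .
\end{eqnarray*}

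\emph{First derivatives of \(\psi\).} From \eqref{eq:finite-level-graph}, \(\partial_x\psi=-F/P\) at the base point. Differentiating \eqref{eq:finite-level-relation} in \(z_i\), and using \(d\lambda_i/dz_i=z_i^{-2}\), gives
\begin{eqnarray*}
    \partial_{z_i}\psi=-\frac{F^{ii}z_i^{-2}}{P}.
\end{eqnarray*}
Here \(P=F^{11}\), since \(\sigma_{k-1}(\mu)=\sigma_{k-1}(\lambda|1)\).

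\emph{Solving for the Hessian.} Substituting, and using \(z_i=1/(1-\lambda_i)\),
\begin{eqnarray*}
    D^2\psi[X,X]=\frac1P\left(-H[\xi,\xi]+\frac{(g\cdot\xi)^2}{F}+2\sum_{p=2}^n\frac{F^{pp}}{1-\lambda_p}\xi_p^2\right).
\end{eqnarray*}

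\textbf{Step 4: matching with \(\mathcal Q_\gamma\).} Compare this with \(\mathcal Q_\gamma/P\) in the normalized form \(a=1\) recalled before Lemma~\ref{lem:easy}. The difference consists of exactly two terms:
\begin{itemize}
    \item \((g\cdot\xi)^2/(FP)\), which equals \(F\dot x^2/P\) because \(\dot x=-g\cdot\xi/F\);
    \item \(-\gamma P\xi_1^2/P=-\gamma(D\psi[X])^2\).
\end{itemize}
This proves \eqref{eq:finite-level-graph-identity-psi}.

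For \(\phi=\log\psi\) with \(\psi=1\) at the base point,
\begin{eqnarray*}
    D^2\phi[X,X]=D^2\psi[X,X]-(D\psi[X])^2 ,
\end{eqnarray*}
and \(D\phi[X]=D\psi[X]\). Substituting these into \eqref{eq:finite-level-graph-identity-psi} gives \eqref{eq:finite-level-graph-identity}.

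The only delicate point is bookkeeping: the chain rule must be applied along the curve, rather than by computing the full Hessian of \(\psi\) in \((x,z)\) and then matching terms. The observation in Step 1 that \(\psi\) is exactly affine along the line is what removes most of this work.
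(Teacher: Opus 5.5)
Your proof is correct. It reaches the same intermediate identity as the paper, $P\,D^2\psi[X,X]=\frac1F(g\cdot\xi)^2-H[\xi,\xi]+2\sum_{p\ge2}\frac{F^{pp}}{1-\lambda_p}\xi_p^2$, but it differentiates along the mirror-image path.

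The paper takes the curve that is affine in the $(x,z)$-coordinates, $t\mapsto(x+t\dot x,\,z+tw)$, so no $\ddot X$ term appears. The price is that the image curve $\Lambda(t)=(\psi(x(t),z(t)),\mu(z(t)))$ in $\lambda$-space is no longer straight: $\ddot\Lambda_1=D^2\psi[X,X]$ and $\ddot\Lambda_p=-2\xi_p^2/(1-\lambda_p)$. Differentiating $\sigma_k(\Lambda(t))=\frac{(n-1)!}{k!(n-k)!}e^{-x(t)}$ twice then gives $H[\xi,\xi]+P\,D^2\psi[X,X]-2\sum_{p\ge2}\frac{F^{pp}}{1-\lambda_p}\xi_p^2=F\dot x^{\,2}$.

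You instead take the line that is straight in $\lambda$, which is curved in $(x,z)$. You exploit the fact that $\psi$ composed with the coordinate map is exactly $\lambda_1$, so $\psi\equiv 1+t\xi_1$ along it. The correction then sits in $D\psi[\ddot X]$, which forces you to compute the full gradient of $\psi$ by implicit differentiation and the second derivative $\ddot x$ of the logarithmic level. In your version, $H[\xi,\xi]$ and $(g\cdot\xi)^2/F^2$ come from $\ddot x$; in the paper they come from $D^2\sigma_k$ along $\Lambda$ and from $\frac{d^2}{dt^2}e^{-x(t)}$.

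Your route makes the first-order claim $D\psi[X]=\xi_1$ immediate, rather than a consequence of comparing first derivatives as in the paper. The paper's route never needs $\nabla\psi$ at all. The total bookkeeping is comparable: the two arguments are just the two ways of writing the second-order chain rule for the inverse pair of coordinate maps $\lambda\leftrightarrow(x,z)$.
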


\begin{proof}
Consider the affine curve in the \((x,z)\)-coordinates given by
\begin{eqnarray*}
    x(t)
    :=
    x+t\dot x,
    \qquad
    z_i(t):=z_i+t w_i,
    \qquad 2\leq i\leq n,
\end{eqnarray*}
and set
\begin{eqnarray*}
    z(t)
    :=
    (z_2(t),\ldots,z_n(t)),\qquad
    \Lambda(t)
    :=
    \left(
        \psi(x(t),z(t)),
        1-\frac{1}{z_2(t)},
        \ldots,
        1-\frac{1}{z_n(t)}
    \right).
\end{eqnarray*}
Since \(\psi(x,z)=1\), we have \(\Lambda(0)=\lambda\). By Eq. \eqref{eq:finite-level-relation}, for all sufficiently small
\(t\),
\begin{eqnarray}\label{eq:finite-level-along-curve}
    \sigma_k(\Lambda(t))
    =
    \frac{(n-1)!}{k!(n-k)!}e^{-x(t)}.
\end{eqnarray}

For every \(i\geq2\),
\begin{eqnarray*}
    \left.
    \frac{\partial\Lambda_i(t)}{\partial t}
    \right|_{t=0}
    =
    \left.
    \frac{\partial}{\partial t}
    \left(
        1-\frac{1}{z_i+t w_i}
    \right)
    \right|_{t=0}
    =
    z_i^{-2}w_i
    =
    \xi_i.
\end{eqnarray*}
Hence the first derivative of the left-hand side of Eq. \eqref{eq:finite-level-along-curve} is
\begin{eqnarray*}
    \left.
    \frac{\partial}{\partial t}
    \sigma_k(\Lambda(t))
    \right|_{t=0}
    =
    P\,D\psi(x,z)[X]
    +
    \sum_{i=2}^nF^{ii}\xi_i.
\end{eqnarray*}
The first derivative of the right-hand side of Eq. \eqref{eq:finite-level-along-curve} is
\begin{eqnarray*}
    \left.
    \frac{\partial}{\partial t}
    \left(
        \frac{(n-1)!}{k!(n-k)!}e^{-x(t)}
    \right)
    \right|_{t=0}
    =
    -F\dot x
    =
    \sum_{i=1}^nF^{ii}\xi_i=
    P\xi_1+\sum_{i=2}^nF^{ii}\xi_i.
\end{eqnarray*}
Comparing the preceding identities and using \(P>0\), we obtain
\begin{eqnarray*}
    D\psi(x,z)[X]=\xi_1.
\end{eqnarray*}
Since \(\phi=\log\psi\) and \(\psi=1\), we have
\begin{eqnarray*}
    D\phi(x,z)[X]
    =
    \frac{D\psi(x,z)[X]}{\psi(x,z)}
    =
    \xi_1.
\end{eqnarray*}
This proves \eqref{eq:phi-first-direction}.

We next differentiate \eqref{eq:finite-level-along-curve} twice with
respect to \(t\). Since both \(x(t)\) and \(z(t)\) are affine,
\begin{eqnarray*}
    \left.
    \frac{\partial^2\Lambda_1(t)}{\partial t^2}
    \right|_{t=0}=D^2\psi(x,z)[X,X],
\end{eqnarray*}
and for every \(i\geq2\),
\begin{eqnarray*}
    \left.
    \frac{\partial^2\Lambda_i(t)}{\partial t^2}
    \right|_{t=0}
    =
    \left.
    \frac{\partial^2}{\partial t^2}
    \left(
        1-\frac{1}{z_i+t w_i}
    \right)
    \right|_{t=0}=
    -2z_i^{-3}w_i^2
    =
    -2\frac{\xi_i^2}{1-\lambda_i}.
\end{eqnarray*}
Thus, the second derivative of the left-hand side of Eq.
\eqref{eq:finite-level-along-curve} is
\begin{eqnarray*}
    \left.
    \frac{\partial^2}{\partial t^2}
    \sigma_k(\Lambda(t))
    \right|_{t=0}
    =
    \sum_{i,j=1}^nF^{ii,jj}\xi_i\xi_j
    +
    \sum_{i=1}^nF^{ii}
    \left.
    \frac{\partial^2\Lambda_i(t)}{\partial t^2}
    \right|_{t=0}.
\end{eqnarray*}
On the other hand, the second derivative of the right-hand side of Eq.
\eqref{eq:finite-level-along-curve} is
\begin{eqnarray*}
    \left.
    \frac{\partial^2}{\partial t^2}
    \left(
        \frac{(n-1)!}{k!(n-k)!}e^{-x(t)}
    \right)
    \right|_{t=0}
    =
    F\dot x^2.
\end{eqnarray*}
Comparing the above identities, we obtain
\begin{eqnarray*}
    F\dot x^2
    =
    \sum_{i,j=1}^nF^{ii,jj}\xi_i\xi_j
    +
    P\,D^2\psi(x,z)[X,X]
    -
    2\sum_{p=2}^n
    \frac{F^{pp}}{1-\lambda_p}\xi_p^2.
\end{eqnarray*}
Using
\begin{eqnarray*}
    -F\dot x
    =
    \sum_{i=1}^nF^{ii}\xi_i
\end{eqnarray*}
and rearranging, we obtain
\begin{eqnarray}\label{eq:finite-psi-second-identity}
    P\,D^2\psi(x,z)[X,X]
    =
    \frac{1}{F}
    \left(
        \sum_{i=1}^nF^{ii}\xi_i
    \right)^2
    -
    \sum_{i,j=1}^nF^{ii,jj}\xi_i\xi_j
    +
    2\sum_{p=2}^n
    \frac{F^{pp}}{1-\lambda_p}\xi_p^2.
\end{eqnarray}

Adding
\begin{eqnarray*}
    \frac{1}{F}
    \left(
        \sum_{i=1}^nF^{ii}\xi_i
    \right)^2
    -\gamma P\xi_1^2
\end{eqnarray*}
to both sides of \eqref{eq:finite-psi-second-identity}, using
\eqref{eq:phi-first-direction}, and dividing by \(P\), we obtain
\eqref{eq:finite-level-graph-identity-psi}.

Finally, since \(\phi=\log\psi\) and \(\psi=1\),
\begin{eqnarray*}
    D\phi[X]
    =
    D\psi[X],
    \qquad
    D^2\psi[X,X]
    =
    D^2\phi[X,X]+\{D\phi[X]\}^2.
\end{eqnarray*}
Substituting these identities into
\eqref{eq:finite-level-graph-identity-psi} proves
\eqref{eq:finite-level-graph-identity}.
\end{proof}

\subsection{G{\aa}rding Root Representation}

Let
\begin{eqnarray*}
    \mathbf 1=(1,\ldots,1)\in\mathbb R^{n-1},
    \qquad
    z^{-1}=(z_2^{-1},\ldots,z_n^{-1}).
\end{eqnarray*}
Since \(\sigma_k\), regarded as a polynomial on
\(\mathbb R^{n-1}\), is hyperbolic with respect to the direction
\(\mathbf 1\), we define the G{\aa}rding polynomial
associated with \(z^{-1}\) by
\begin{eqnarray}\label{eq:garding-polynomial}
    \mathcal G_z(t)
    :=
    \sigma_k(t\mathbf 1-z^{-1}).
\end{eqnarray}
This is a real-rooted polynomial of degree \(k\). Its roots
\begin{eqnarray*}
    \chi_1,\ldots,\chi_k
\end{eqnarray*}
are called the G{\aa}rding roots of \(z^{-1}\) with respect to
\(\sigma_k\) and the direction \(\mathbf 1\).

These roots can also be obtained from the following polynomial:
\begin{eqnarray*}
    p_z(t)
    :=
    \prod_{i=2}^n(t-z_i^{-1}).
\end{eqnarray*}
Taking the \((n-k-1)\)-st derivative, we have
\begin{eqnarray*}
    p_z^{(n-k-1)}(t)
    =
    (n-k-1)!\,
    \sigma_k(t\mathbf 1-z^{-1}).
\end{eqnarray*}
This identity is the key link in the root-based analysis below.
All the zeros of \(p_z\) are positive, so repeated applications of
Rolle's theorem give
\begin{eqnarray*}
    \chi_\alpha>0,
    \qquad
    1\leq\alpha\leq k.
\end{eqnarray*}

The hard-region condition gives
\(Q(z)=F-P(z)<0\). Since
\(\mathcal G_z(1)=Q(z)\), none of the roots \(\chi_\alpha\) equals
\(1\). We may therefore define the resolvent variables by
\begin{eqnarray}\label{eq:resolvent-coordinates}
    y_\alpha
    :=
    \frac{1}{1-\chi_\alpha}.
\end{eqnarray}
These variables are useful because both the root factorization of
\(\mathcal G_z\) and its logarithmic derivative at \(t=1\) take
particularly simple forms. More precisely, \(\mathcal G_z(1)\) can be expressed in terms of their
product, while
\(\mathcal G_z'(1)/\mathcal G_z(1)\) is given by their sum. This
simplification will be useful for rewriting the level-set relation and
characterizing the hard region.

When \(k=n-1\), these variables are closely related to the usual
Legendre transformation. Indeed, in this case,
\begin{eqnarray*}
    \mathcal G_z(t)
    =
    \prod_{i=2}^n(t-z_i^{-1}),
\end{eqnarray*}
so
\begin{eqnarray*}
    \chi_\alpha
    =
    z_{\alpha+1}^{-1}
    =
    1-\lambda_{\alpha+1},
    \qquad
    y_\alpha
    =
    \frac{1}{\lambda_{\alpha+1}}.
\end{eqnarray*}
Thus, in the positive-definite case, the variables \(y_\alpha\) are
exactly the eigenvalues of the inverse Hessian that appears under the
usual Legendre transformation:
\begin{eqnarray*}
    D^2u^*(Du(x))
    =
    \bigl(D^2u(x)\bigr)^{-1}.
\end{eqnarray*}
The variables introduced here may therefore be viewed as an extension
of these inverse-eigenvalue variables to the general
\(\sigma_k\) setting.

The value \(t=1\) is natural here because the largest eigenvalue has
been normalized to \(\lambda_1=1\). Moreover,
\begin{eqnarray*}
    \mathcal G_z(1)
    =
    \sigma_k(\mathbf 1-z^{-1})
    =
    \sigma_k(\mu)
    =
    Q(z).
\end{eqnarray*}
The root factorization gives
\begin{eqnarray}\label{eq:garding-root-factorization}
    \mathcal G_z(t)
    =
    \binom{n-1}{k}
    \prod_{\alpha=1}^k(t-\chi_\alpha).
\end{eqnarray}
Taking its logarithmic derivative at \(t=1\), we obtain
\begin{eqnarray}\label{eq:garding-log-derivative}
    \frac{\mathcal G_z'(1)}{\mathcal G_z(1)}
    =
    \sum_{\alpha=1}^k\frac{1}{1-\chi_\alpha}
    =
    \sum_{\alpha=1}^k y_\alpha.
\end{eqnarray}
Since
\(\mathcal G_z'(t)=(n-k)\sigma_{k-1}(t\mathbf 1-z^{-1})\), we have
\(\mathcal G_z'(1)=(n-k)P\). Combining this identity with
\eqref{eq:garding-root-factorization} and
\eqref{eq:garding-log-derivative} gives
\begin{eqnarray*}
    Q
    &=&
    \frac{(n-1)!}{k!(n-k)!}
    \frac{n-k}{\prod_{\alpha=1}^k y_\alpha},\qquad
    P=
    \frac{(n-1)!}{k!(n-k)!}
    \frac{\sum_{\alpha=1}^k y_\alpha}
         {\prod_{\alpha=1}^k y_\alpha},\\
    F&=&
    \frac{(n-1)!}{k!(n-k)!}
    \frac{\sum_{\alpha=1}^k y_\alpha+n-k}
         {\prod_{\alpha=1}^k y_\alpha},\qquad
    \frac{F}{P}=
    1+\frac{n-k}{\sum_{\alpha=1}^k y_\alpha}.
\end{eqnarray*}
Thus the variables \(y_\alpha\) express \(P\), \(Q\), and \(F\) in
terms of simple sums and products.

Writing \(y=(y_1,\ldots,y_k)\), we further obtain from
\begin{eqnarray*}
    e^{\phi(x,z)}
    =
    \frac{
        \dfrac{(n-1)!}{k!(n-k)!}e^{-x}-Q
    }{P}
\end{eqnarray*}
that
\begin{eqnarray}\label{eq:phi-root-formula}
    \phi(x,y)
    =
    \log\left|
        \prod_{\alpha=1}^k y_\alpha-(n-k)e^x
    \right|
    -
    \log\left|
        \sum_{\alpha=1}^k y_\alpha
    \right|
    -x.
\end{eqnarray}
At the point corresponding to \(\lambda\), we have \(\psi=1\).
Substituting this into \eqref{eq:finite-level-graph}, we obtain
\begin{eqnarray}\label{eq:finite-graph-relation}
    \prod_{\alpha=1}^k y_\alpha
    =
    e^x
    \left(
        \sum_{\alpha=1}^k y_\alpha+n-k
    \right).
\end{eqnarray}

We next characterize the hard region in root space. Since
\begin{eqnarray*}
    \mu=\mathbf 1-z^{-1}\in\Gamma_{k-1},
\end{eqnarray*}
we have
\begin{eqnarray*}
    \mathcal G_z(1+s)
    =
    \sigma_k(\mu+s\mathbf 1)
    =
    Q+
    \sum_{j=1}^k
    \binom{n-1-k+j}{j}
    \sigma_{k-j}(\mu)s^j.
\end{eqnarray*}
and the coefficient of \(s^j\) is
positive for every \(j\geq1\).

The hard-region condition gives \(Q=F-P<0\). Hence the constant term
of \(\mathcal G_z(1+s)\) is negative, while all its remaining
coefficients are positive. It follows that
\(\mathcal G_z(1+s)\) is strictly increasing for \(s>0\), and has exactly one positive zero. Returning to \(t=1+s\), exactly one
G{\aa}rding root satisfies \(\chi_\alpha>1\). Since all the
G{\aa}rding roots are positive and
\(\mathcal G_z(1)=Q\neq0\), all the remaining roots satisfy
\(0<\chi_\alpha<1\).

It follows that exactly one of the resolvent variables
\(y_\alpha\) is negative, while all the others satisfy
\(y_\alpha>1\). Thus every point in the hard region lies on the
\emph{one-negative resolvent branch} of the G{\aa}rding spectrum $\{y_\alpha\}$.

For simplicity, we now define
\begin{eqnarray}\label{eq:def-tau}
    \tau
    :=
    -\left(
        \sum_{\alpha=1}^k y_\alpha+n-k
    \right).
\end{eqnarray}
It follows that
\begin{eqnarray*}
    \frac{F}{P}
    =
    \frac{\tau}{\tau+n-k},
    \qquad
    \frac{P}{F}
    =
    1+\frac{n-k}{\tau}.
\end{eqnarray*}
Consequently, the hard-region condition
\begin{eqnarray*}
    \frac{P}{F}
    >
    1+\frac{n}{2k^2}
\end{eqnarray*}
is equivalent to
\begin{eqnarray}\label{eq:tau-range}
    0<\tau<\frac{2k^2(n-k)}{n}.
\end{eqnarray}

We now record some elementary consequences of the parameter range
that will be used in the hard-region estimates.

\begin{lemma}\label{lem:hard-parameter-signs}
Under the parameter assumptions of
Theorem~\ref{thm-crucial-ineq}, the following inequalities hold
in the hard region:
\begin{eqnarray*}
    (n-k)(2-\gamma)-(\gamma-1)\tau>0,\qquad (n-k)(2k-n\gamma)-n(\gamma-1)\tau>0.
\end{eqnarray*}
\end{lemma}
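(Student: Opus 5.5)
Both expressions are affine in $\tau$, so it suffices to check them at the two endpoints of the admissible interval \eqref{eq:tau-range}, namely $0<\tau<\tau_0:=\frac{2k^2(n-k)}{n}$. Each quantity will be nonnegative at one endpoint and strictly positive at the other, hence strictly positive on the open interval. We split according to the sign of $\gamma-1$.

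If $\gamma\le1$, then $-(\gamma-1)\tau\ge0$ and $-n(\gamma-1)\tau\ge0$. We also have $2-\gamma\geq1>0$, and $2k-n\gamma>0$ because $\gamma<2k/n$. Since $n-k>0$, both inequalities follow immediately, for every $\tau>0$.

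If $\gamma>1$, both expressions are decreasing in $\tau$, so it suffices to show they are nonnegative at $\tau=\tau_0$; strict positivity for $\tau<\tau_0$ then follows from strict monotonicity. Dividing by $n-k>0$, the first inequality at $\tau_0$ reads
\begin{eqnarray*}
    (2-\gamma)-(\gamma-1)\frac{2k^2}{n}\ge0
    \iff
    \gamma\Bigl(1+\frac{2k^2}{n}\Bigr)\le 2+\frac{2k^2}{n}
    \iff
    \gamma\le 1+\frac{n}{n+2k^2}.
\end{eqnarray*}
The second inequality at $\tau_0$ reads
\begin{eqnarray*}
    2k-n\gamma-2k^2(\gamma-1)\ge0
    \iff
    \gamma\le\frac{2k+2k^2}{n+2k^2}=1+\frac{2k-n}{2k^2+n}.
\end{eqnarray*}
The second condition is exactly the hypothesis of Theorem~\ref{thm-crucial-ineq}, with strict inequality. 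For the first, note that $2k-n<n$ since $k<n$, so $1+\frac{2k-n}{2k^2+n}<1+\frac{n}{2k^2+n}$. Hence the hypothesis implies the first condition as well.

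In both cases the $\tau$-independent constraint $\gamma<2k/n$ is needed only when $\gamma\le1$, where it gives $2k-n\gamma>0$. When $\gamma>1$, this constraint is implied by the bound just checked: that bound forces $2k-n\gamma>2k^2(\gamma-1)>0$. No step is a genuine obstacle. The only care needed is the algebraic simplification of the endpoint conditions, and in particular the observation that $k<n$ makes the second bound the binding one.
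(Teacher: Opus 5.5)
Your proof is correct and follows the paper's route. Both expressions are affine in $\tau$, so positivity on $0<\tau<2k^2(n-k)/n$ reduces to endpoint checks; the paper calls these a ``direct calculation'', and you carry them out explicitly. Splitting by the sign of $\gamma-1$ is only a cosmetic variation of checking both endpoints, and your endpoint algebra is right, including the observation that $k<n$ makes $1+\frac{2k-n}{2k^2+n}$ the binding bound.
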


\begin{proof}
The left-hand sides of the two inequalities are affine functions
of \(\tau\). A direct calculation shows that both are positive at
\(\tau=0\) and \(\tau=2k^2(n-k)/n\). Hence they are positive
throughout the closed interval.
\end{proof}

After relabeling the resolvent variables, we may assume that the unique
negative component is the last one, and write
\begin{eqnarray}\label{eq:finite-one-negative-branch}
    y=(y_1,\ldots,y_{k-1},-\rho),
\end{eqnarray}
where \(\rho>0\) and \(y_j>1\) for \(1\leq j\leq k-1\). Set
\begin{eqnarray}\label{eq:def-R-rho}
    R:=\sum_{j=1}^{k-1}y_j.
\end{eqnarray}
Then
\(\rho=\tau+R+n-k\). The graph relation
\eqref{eq:finite-graph-relation} now becomes
\begin{eqnarray}\label{eq:finite-inverse-level}
    e^x
    =
    \frac{\rho}{\tau}
    \prod_{j=1}^{k-1}y_j.
\end{eqnarray}
Along any sequence in the hard region, the definition of \(x\) and
\eqref{eq:finite-inverse-level} show that \(F\to0\) if and only if
the right-hand side tends to \(+\infty\).

\subsection{Inverse G{\aa}rding Root Concavity and Quadratic Form Positivity}

On any region where the G{\aa}rding roots are distinct, define the
inverse roots by
\begin{eqnarray}\label{eq:inverse-roots}
    \zeta_l:=\frac{1}{\chi_l}>0,
    \qquad 1\leq l\leq k.
\end{eqnarray}
The main reason for introducing the inverse roots is that, when
arranged in increasing order, their partial sums are concave functions
of the tail coordinates \(z\). Combined with the corresponding ordering of the coefficients
\(\phi_{\zeta_{(l)}}\), this concavity allows us to apply summation
by parts and show that the remainder arising from the second-order
chain rule is nonnegative; see the last term in
\eqref{eq:finite-chain-decomposition}.

After substituting
\(y_l=\zeta_l/(\zeta_l-1)\) into
\eqref{eq:phi-root-formula}, we obtain a function of
\((x,\zeta)\), which, by a slight abuse of notation, we still
denote by \(\phi\). That is,
\begin{eqnarray*}
    \phi(x,z)=\phi\bigl(x,\zeta(z)\bigr).
\end{eqnarray*}

Let \(X=(\dot x,w)\) be arbitrary. Consider the affine
curve
\begin{eqnarray*}
    x(t):=x+t\dot x,
    \qquad
    z(t):=z+tw.
\end{eqnarray*}
Set
\begin{eqnarray*}
    \zeta_l(t):=\zeta_l(z(t)),\qquad \dot\zeta_l
    :=\left.\frac{d}{dt}\zeta_l(t)\right|_{t=0},
    \qquad
    \ddot\zeta_l
    :=\left.\frac{d^2}{dt^2}\zeta_l(t)\right|_{t=0}.
\end{eqnarray*}
At \(t=0\), the first- and second-order chain rules give
\begin{eqnarray}
    D_{(x,z)}\phi[X]
    &=&
    D_{(x,\zeta)}\phi[(\dot x,\dot\zeta)],
    \label{eq:first-chain-zeta}
    \\
    D_{(x,z)}^2\phi[X,X]
    &=&
    D_{(x,\zeta)}^2\phi
    [(\dot x,\dot\zeta),(\dot x,\dot\zeta)]
    +\sum_{l=1}^k\phi_{\zeta_l}\ddot\zeta_l.
    \label{eq:second-chain-zeta}
\end{eqnarray}
Consequently, Lemma~\ref{lem:finite-level-graph-identity} yields
\begin{eqnarray}
    \frac{\mathcal Q_\gamma(\lambda;\xi)}{P}=
    \left(
        D_{(x,\zeta)}^2\phi
        -(\gamma-1)
        D_{(x,\zeta)}\phi\otimes D_{(x,\zeta)}\phi
    \right)
    [(\dot x,\dot\zeta),(\dot x,\dot\zeta)]
    +\frac{\tau}{n-k+\tau}\dot x^{\,2}
    +\sum_{l=1}^k\phi_{\zeta_l}\ddot\zeta_l.
    \label{eq:finite-chain-decomposition}
\end{eqnarray}
The first two terms can be treated directly by completing the square. Lemma~\ref{lem:finite-composition} shows that the last term is nonnegative, using the concavity of the ordered partial sums of the inverse roots.

For simplicity, we make the invertible diagonal change of variables
\begin{eqnarray}\label{eq:root-direction-scaling}
    v_l
    :=\frac{(y_l-1)^2}{y_l}\dot\zeta_l.
\end{eqnarray}
Since
\begin{eqnarray*}
    y_l=\frac{\zeta_l}{\zeta_l-1},
    \qquad
    \frac{dy_l}{d\zeta_l}=-(y_l-1)^2,
\end{eqnarray*}
this definition is equivalent to
\begin{eqnarray*}
    \dot y_l=-y_l v_l.
\end{eqnarray*}

Since this diagonal change of variables is invertible, it suffices to
study the quadratic form in the variables \((\dot x,v)\). The following
lemma computes the first and second derivatives of \(\phi\) explicitly
in these variables and shows that all the diagonal coefficients are
positive except for the one associated with the unique negative
resolvent variable. This explicit representation allows us to complete
the square and analyze the remaining negative term.
\begin{lemma}\label{lem:finite-root-formula}
At the point corresponding to \(\lambda\), where \(\phi=0\), we have
\begin{eqnarray}
    D_{(x,\zeta)}\phi[(\dot x,\dot\zeta)]
    &=&
    -\frac{
        \tau\dot x+
        \sum_{l=1}^k(y_l+\tau)v_l
    }{n-k+\tau},
    \label{eq:Dphi-root}
    \\
    D_{(x,\zeta)}^2\phi
    [(\dot x,\dot\zeta),(\dot x,\dot\zeta)]
    &=&
    \sum_{l=1}^k d_l v_l^2
    +\frac{
        \left(\sum_{l=1}^k y_l v_l\right)^2
        +(n-k)\tau
        \left(\dot x+\sum_{l=1}^k v_l\right)^2
    }{(n-k+\tau)^2},
    \label{eq:D2phi-root}
\end{eqnarray}
where
\begin{eqnarray}\label{eq:def-dalpha}
    d_l
    :=\frac{2y_l^2+\tau(y_l+1)}
    {(n-k+\tau)(y_l-1)},
\end{eqnarray}
and $d_l>0$ for $1\leq l\leq k-1$; $d_k<0$.

Therefore, the first two terms of the right-hand side of Eq. \eqref{eq:finite-chain-decomposition} satisfy
\begin{eqnarray}
    &&
    \Bigl(
        D_{(x,\zeta)}^2\phi
        -(\gamma-1)
        D_{(x,\zeta)}\phi\otimes D_{(x,\zeta)}\phi
    \Bigr)
    [(\dot x,\dot\zeta),(\dot x,\dot\zeta)]
    +\frac{\tau}{n-k+\tau}\dot x^{\,2}
    \nonumber\\
    &=&
    \sum_{l=1}^k d_l v_l^2
    +\frac{\left(\sum_{l=1}^k y_l v_l\right)^2}
    {(n-k+\tau)^2}
    +\frac{(n-k)\tau}{(n-k+\tau)^2}
    \left(\dot x+\sum_{l=1}^k v_l\right)^2
    \nonumber\\
    &&
    -\frac{\gamma-1}{(n-k+\tau)^2}
    \left[
        \tau\dot x+\sum_{l=1}^k(y_l+\tau)v_l
    \right]^2
    +\frac{\tau}{n-k+\tau}\dot x^{\,2}.
    \label{eq:finite-root-quadratic}
\end{eqnarray}
\end{lemma}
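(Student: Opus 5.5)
We will prove the lemma by direct differentiation of the explicit formula \eqref{eq:phi-root-formula}, carried out along the affine curve \(x(t)=x+t\dot x\), \(\zeta(t)=\zeta+t\dot\zeta\) in the \((x,\zeta)\)-coordinates. Along this curve \(y_l(t)=\zeta_l(t)/(\zeta_l(t)-1)\) is \emph{not} affine, so its second derivative must be tracked. From \(dy_l/d\zeta_l=-(y_l-1)^2\) we get \(d^2y_l/d\zeta_l^2=2(y_l-1)^3\). Together with \eqref{eq:root-direction-scaling} this gives
\begin{eqnarray*}
    \dot y_l=-y_lv_l,
    \qquad
    \ddot y_l=\frac{2y_l^2}{y_l-1}v_l^2 .
\end{eqnarray*}

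Write \(S:=\sum_l y_l=-(n-k+\tau)\), \(\Pi:=\prod_l y_l\) and \(A:=\Pi-(n-k)e^x\). Then \(\phi=\log|A|-\log|S|-x\). The graph relation \eqref{eq:finite-graph-relation} gives \(A=e^xS\) at the point corresponding to \(\lambda\). Hence
\begin{eqnarray*}
    \frac{\Pi}{A}=\frac{\tau}{n-k+\tau},
    \qquad
    \frac{(n-k)e^x}{A}=-\frac{n-k}{n-k+\tau}.
\end{eqnarray*}
These two ratios are the only point-dependent data we need.

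\textbf{First derivative.} Since \(\dot\Pi/\Pi=-\sum_l v_l\), we have \(\dot A/A=\bigl(-\tau\sum_l v_l+(n-k)\dot x\bigr)/(n-k+\tau)\). Also \(\dot S/S=\sum_l y_lv_l/(n-k+\tau)\). Subtracting these and \(\dot x\) gives \eqref{eq:Dphi-root} immediately.

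\textbf{Second derivative.} We use \((\log|A|)''=\ddot A/A-(\dot A/A)^2\), and similarly for \(S\), with
\begin{eqnarray*}
    \frac{\ddot\Pi}{\Pi}
    =\Bigl(\sum_l v_l\Bigr)^2+\sum_l v_l^2+\sum_l\frac{2y_l}{y_l-1}v_l^2,
\end{eqnarray*}
\(\ddot S=\sum_l 2y_l^2v_l^2/(y_l-1)\), and \(\ddot{(e^x)}=e^x\dot x^2\). Collecting terms, we expect three groups:
\begin{enumerate}
    \item the diagonal terms \(v_l^2\) combine, using \(1+\frac{2y_l}{y_l-1}=\frac{3y_l-1}{y_l-1}\) and \(S=-(n-k+\tau)\), into \(d_lv_l^2\) with \(d_l\) as in \eqref{eq:def-dalpha};
    \item the terms \(\bigl(\sum_l v_l\bigr)^2\), \(\dot x\sum_l v_l\) and \(\dot x^2\), together with \(-(\dot A/A)^2\), reassemble into \((n-k)\tau\bigl(\dot x+\sum_l v_l\bigr)^2/(n-k+\tau)^2\);
    \item \(+(\dot S/S)^2\) yields \(\bigl(\sum_l y_lv_l\bigr)^2/(n-k+\tau)^2\).
\end{enumerate}
This bookkeeping is the main (purely algebraic) obstacle. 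We will check it by verifying the coefficient of each monomial \(v_l^2\), \(v_lv_m\), \(\dot xv_l\) and \(\dot x^2\) separately.

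\textbf{Signs of \(d_l\).} For \(1\le l\le k-1\), we have \(y_l>1\) and \(\tau>0\), so both the numerator and the denominator of \(d_l\) are positive. For \(l=k\), write \(y_k=-\rho\). The denominator \((n-k+\tau)(-\rho-1)\) is negative, while the numerator is
\begin{eqnarray*}
    2\rho^2+\tau(1-\rho)=\rho(2\rho-\tau)+\tau>0,
\end{eqnarray*}
because \(\rho=\tau+R+n-k>\tau\). Hence \(d_k<0\).

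\textbf{The identity \eqref{eq:finite-root-quadratic}.} This follows by inserting \eqref{eq:Dphi-root} and \eqref{eq:D2phi-root} into the left-hand side and squaring \(D\phi\); no further work is needed.
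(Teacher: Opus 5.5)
Your route is the paper's route: you differentiate \eqref{eq:phi-root-formula} along the affine path in \((x,\zeta)\), using \(\dot y_l=-y_lv_l\), \(\ddot y_l=2y_l^2v_l^2/(y_l-1)\), and the point values \(\sum_l y_l=-(n-k+\tau)\), \(\prod_l y_l=-\tau e^x\). The last of these is equivalent to your ratios \(\Pi/A\) and \((n-k)e^x/A\). The following parts are all correct:
\begin{itemize}
\item your first derivative;
\item your groups (2) and (3);
\item the final substitution;
\item your sign argument for \(d_l\). In particular, \(2\rho^2+\tau(1-\rho)=\rho(2\rho-\tau)+\tau>0\) because \(\rho>\tau\), which is more explicit than the paper's ``direct calculation''.
\end{itemize}

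However, your formula for \(\ddot\Pi/\Pi\) has a sign error, and step (1) fails as written. Expand
\(\ddot\Pi=\sum_l\ddot y_l\prod_{m\neq l}y_m+\sum_{l\neq m}\dot y_l\dot y_m\prod_{p\neq l,m}y_p\).
The cross terms give \(\sum_{l\neq m}v_lv_m=\bigl(\sum_l v_l\bigr)^2-\sum_l v_l^2\), so
\[
\frac{\ddot\Pi}{\Pi}=\Bigl(\sum_l v_l\Bigr)^2-\sum_l v_l^2+\sum_l\frac{2y_l}{y_l-1}v_l^2 .
\]
Equivalently, \((\log y_l)''=\ddot y_l/y_l-v_l^2\).

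With your \(+\sum_l v_l^2\), the diagonal coefficient comes out as
\(\frac{2y_l^2+\tau(3y_l-1)}{(n-k+\tau)(y_l-1)}=d_l+\frac{2\tau}{n-k+\tau}\), not \(d_l\).

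The correct combination is \(\frac{2y_l}{y_l-1}-1=\frac{y_l+1}{y_l-1}\). Multiplied by \(\Pi/A=\tau/(n-k+\tau)\), it gives the paper's term \(\frac{\tau}{n-k+\tau}\sum_l\frac{y_l+1}{y_l-1}v_l^2\). Adding \(-\ddot S/S=\sum_l\frac{2y_l^2}{(n-k+\tau)(y_l-1)}v_l^2\) then yields exactly \(\sum_l d_lv_l^2\). With this correction, your monomial-by-monomial check goes through and reproduces \eqref{eq:D2phi-root}.
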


\begin{proof}
To derive \eqref{eq:Dphi-root} and \eqref{eq:D2phi-root}, we
differentiate \eqref{eq:phi-root-formula} along the affine path
\begin{eqnarray*}
    (x(s),\zeta(s))
    :=
    (x+s\dot x,\zeta+s\dot\zeta)
\end{eqnarray*}
at \(s=0\). By \eqref{eq:root-direction-scaling}, the corresponding
derivatives of \(y_l\) are 
\begin{eqnarray}\label{eq:y-root-first-second}
    y_l'=-y_l v_l,
    \qquad
    y_l''=\frac{2y_l^2}{y_l-1}v_l^2.
\end{eqnarray}
At the point corresponding to \(\lambda\), we have
\begin{eqnarray*}
    \sum_{l=1}^k y_l=-(n-k+\tau),
    \qquad
    \prod_{l=1}^k y_l=-\tau e^x.
\end{eqnarray*}
Differentiating the three terms in
\eqref{eq:phi-root-formula} once gives
\begin{eqnarray*}
    \left.
    \frac{d}{ds}
    \log\left|
        \prod_{l=1}^k y_l(s)-(n-k)e^{x+s\dot x}
    \right|
    \right|_{s=0}
    &=&
    \frac{(n-k)\dot x-\tau\sum_{l=1}^k v_l}
    {n-k+\tau},
    \\
    -\left.
    \frac{d}{ds}
    \log\left|\sum_{l=1}^k y_l(s)\right|
    \right|_{s=0}=
    -\frac{\sum_{l=1}^k y_l v_l}{n-k+\tau}, &&\left.\frac{d}{ds}(-x-s\dot x)\right|_{s=0}
    =
    -\dot x.    
\end{eqnarray*}
Adding these together proves \eqref{eq:Dphi-root}.

Differentiating once more and using
\eqref{eq:y-root-first-second}, the first two logarithmic terms give
$$
    \left.
    \frac{d^2}{ds^2}
    \log\left|
        \prod_{l=1}^k y_l(s)-(n-k)e^{x+s\dot x}
    \right|
    \right|_{s=0}
    =
    \frac{(n-k)\tau}{(n-k+\tau)^2}
    \left(\dot x+\sum_{l=1}^k v_l\right)^2
    +\frac{\tau}{n-k+\tau}
    \sum_{l=1}^k\frac{y_l+1}{y_l-1}v_l^2
$$
and
\begin{eqnarray*}
    -\left.
    \frac{d^2}{ds^2}
    \log\left|\sum_{l=1}^k y_l(s)\right|
    \right|_{s=0}
    =
    \frac{\left(\sum_{l=1}^k y_l v_l\right)^2}
    {(n-k+\tau)^2}
    +\frac{2}{n-k+\tau}
    \sum_{l=1}^k\frac{y_l^2}{y_l-1}v_l^2.
\end{eqnarray*}
The second derivative of the last term \(-x-s\dot x\) vanishes.
Combining the coefficients of each \(v_l^2\) gives
\begin{eqnarray*}
    \frac{\tau(y_l+1)+2y_l^2}
    {(n-k+\tau)(y_l-1)}
    =d_l,
\end{eqnarray*}
which proves \eqref{eq:D2phi-root} and \eqref{eq:def-dalpha}.

Moreover, by direct calculation, we have $d_l>0$ for $1\leq l\leq k-1$ and $d_k<0$.

Finally, substituting \eqref{eq:Dphi-root} and
\eqref{eq:D2phi-root} into the root-coordinate quadratic form and
using
\begin{eqnarray*}
    \frac{F}{P}=\frac{\tau}{n-k+\tau}
\end{eqnarray*}
gives \eqref{eq:finite-root-quadratic}.
\end{proof}

Formula~\eqref{eq:finite-root-quadratic} shows that, apart from the
individual square terms
\begin{eqnarray*}
    \sum_{l=1}^k d_l v_l^2,
\end{eqnarray*}
the dependence of all the remaining terms on \(v\) occurs only through
the two linear combinations
\begin{eqnarray*}
    \sum_{l=1}^k y_l v_l,
    \qquad
    \sum_{l=1}^k v_l.
\end{eqnarray*}
In other words, if a root-coordinate variation
\(v=(v_1,\ldots,v_k)\) satisfies
\begin{eqnarray*}
    \sum_{l=1}^k y_l v_l=0,
    \qquad
    \sum_{l=1}^k v_l=0,
\end{eqnarray*}
then all the cross terms involving distinct \(v_l\) in
\eqref{eq:finite-root-quadratic} vanish, leaving only the diagonal
term \(\sum_{l=1}^k d_l v_l^2\) in the root variables.
Thus, regardless of the number \(k\) of roots, the coupled part always
lies in the at most two-dimensional subspace spanned by
\begin{eqnarray*}
    y=(y_1,\ldots,y_k)^{\mathsf T},
    \qquad
    \mathbf 1_k=(1,\ldots,1)^{\mathsf T}\in\mathbb R^k.
\end{eqnarray*}

Let
\begin{eqnarray*}
    D:=\operatorname{diag}(d_1,\ldots,d_k),
    \qquad
    U:=
    \begin{pmatrix}
        y_1 & y_2 & \cdots & y_{k-1} & -\rho\\
        1   & 1   & \cdots & 1       & 1
    \end{pmatrix}^{\mathsf T},
\end{eqnarray*}
and
\begin{eqnarray*}
    v:=(v_1,\ldots,v_k)^{\mathsf T},
    \qquad
    Y:=\sum_{l=1}^k y_l v_l,
    \qquad
    S:=\sum_{l=1}^k v_l.
\end{eqnarray*}
We first complete the square in \(\dot x\) on the right-hand side of
\eqref{eq:finite-root-quadratic}:
\begin{lemma}\label{lem:finite-normal-form}
For each fixed \(v\), the minimum of the right-hand side of
\eqref{eq:finite-root-quadratic} over \(\dot x\) is
\begin{eqnarray}\label{eq:root-block-after-x}
    v^{\mathsf T}
    \left[
        D+\frac{1}{(n-k+\tau)^2}
        U W_{\gamma}(\tau)U^{\mathsf T}
    \right]v,
\end{eqnarray}
where
\begin{eqnarray}
    W_{\gamma}(\tau)
    &=&
    \begin{pmatrix}
        2-\gamma
        &
        -(\gamma-1)\tau\\
        -(\gamma-1)\tau
        &
        (n-k)\tau-(\gamma-1)\tau^2
    \end{pmatrix}
    \nonumber\\
    &&
    -\frac{\tau}{2(n-k)+(2-\gamma)\tau}
    \binom{-(\gamma-1)}
          {n-k-(\gamma-1)\tau}
    \binom{-(\gamma-1)}
          {n-k-(\gamma-1)\tau}^{\mathsf T}.
    \label{eq:W-struct-new}
\end{eqnarray}
\end{lemma}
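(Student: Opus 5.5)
The plan is to treat the right-hand side of \eqref{eq:finite-root-quadratic} as a quadratic polynomial in the single scalar \(\dot x\), with \(v\) held fixed, and complete the square. Set \(N:=n-k+\tau\). Then every non-diagonal term depends on \(v\) only through \(Y=\sum_l y_lv_l\) and \(S=\sum_l v_l\). Moreover, \(\sum_l(y_l+\tau)v_l=Y+\tau S\) and \((Y,S)^{\mathsf T}=U^{\mathsf T}v\). After multiplying the non-diagonal part by \(N^2\), the right-hand side of \eqref{eq:finite-root-quadratic} equals
\begin{eqnarray*}
    \sum_{l=1}^k d_lv_l^2+\frac{1}{N^2}\,\mathcal B(\dot x,Y,S),
\end{eqnarray*}
where
\begin{eqnarray*}
    \mathcal B
    :=
    Y^2+(n-k)\tau(\dot x+S)^2
    -(\gamma-1)(\tau\dot x+Y+\tau S)^2
    +\tau N\dot x^2 .
\end{eqnarray*}
It therefore suffices to minimize \(\mathcal B\) over \(\dot x\) and show that the minimum is \((Y,S)\,W_\gamma(\tau)\,(Y,S)^{\mathsf T}\).

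First I will expand \(\mathcal B\) in powers of \(\dot x\). The coefficient of \(\dot x^2\) is
\begin{eqnarray*}
    (n-k)\tau-(\gamma-1)\tau^2+\tau(n-k+\tau)
    =\tau\bigl(2(n-k)+(2-\gamma)\tau\bigr).
\end{eqnarray*}
The coefficient of \(2\dot x\) is \(\tau\,b\), where
\begin{eqnarray*}
    b:=-(\gamma-1)Y+\bigl(n-k-(\gamma-1)\tau\bigr)S
    =\binom{-(\gamma-1)}{n-k-(\gamma-1)\tau}^{\mathsf T}\binom{Y}{S}.
\end{eqnarray*}
The \(\dot x\)-free part is
\begin{eqnarray*}
    (2-\gamma)Y^2-2(\gamma-1)\tau YS+\bigl((n-k)\tau-(\gamma-1)\tau^2\bigr)S^2 .
\end{eqnarray*}
This is exactly the first matrix in \eqref{eq:W-struct-new} evaluated at \((Y,S)\).

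The only point that needs care is that the leading coefficient in \(\dot x\) is strictly positive, so that the infimum over \(\dot x\) is a genuine minimum. This follows because \(\tau>0\) in the hard region by \eqref{eq:tau-range}, and because \(\gamma<2k/n<2\) by the hypothesis of Theorem~\ref{thm-crucial-ineq}. Hence \(2(n-k)+(2-\gamma)\tau>0\). The minimum is attained at \(\dot x=-b/(2(n-k)+(2-\gamma)\tau)\), and the value subtracted from the \(\dot x\)-free part is
\begin{eqnarray*}
    \frac{\tau^2b^2}{\tau\bigl(2(n-k)+(2-\gamma)\tau\bigr)}
    =\frac{\tau}{2(n-k)+(2-\gamma)\tau}\,b^2 .
\end{eqnarray*}
This is precisely the rank-one correction in \eqref{eq:W-struct-new}. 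Substituting \((Y,S)^{\mathsf T}=U^{\mathsf T}v\) and restoring the factor \(1/N^2\) gives \eqref{eq:root-block-after-x}.

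There is no serious obstacle here. The main risk is bookkeeping: tracking the factor \(N=n-k+\tau\) in the \(\tau\dot x^2/N\) term, which must be multiplied by \(N^2\), and the sign of the cross terms.
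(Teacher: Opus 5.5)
Your proposal is correct and follows the paper's proof. Both write the non-diagonal part in terms of $(Y,S)^{\mathsf T}=U^{\mathsf T}v$ and complete the square in $\dot x$, with the same leading coefficient $\tau\bigl(2(n-k)+(2-\gamma)\tau\bigr)$ and the same linear functional $-(\gamma-1)Y+\{n-k-(\gamma-1)\tau\}S$. The only difference is minor: you get $2(n-k)+(2-\gamma)\tau>0$ directly from $\gamma<2k/n<2$, whereas the paper cites Lemma~\ref{lem:hard-parameter-signs}; both justifications are valid.
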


\begin{proof}
Using
\(U^{\mathsf T}v=(Y,S)^{\mathsf T}\), the right-hand side of
\eqref{eq:finite-root-quadratic} can be written as
\begin{eqnarray*}
    v^{\mathsf T}Dv
    +\frac{1}{(n-k+\tau)^2}
    \Bigl[
        Y^2+(n-k)\tau(\dot x+S)^2
        -(\gamma-1)(\tau\dot x+Y+\tau S)^2
    \Bigr]
    +\frac{\tau}{n-k+\tau}\dot x^{\,2}.
\end{eqnarray*}
Set
\begin{eqnarray*}
    c_\tau
    :=
    2(n-k)+(2-\gamma)\tau,
    \qquad
    L
    :=
    -(\gamma-1)Y
    +\{n-k-(\gamma-1)\tau\}S.
\end{eqnarray*}
By Lemma~\ref{lem:hard-parameter-signs},
\(c_\tau>0\). Expanding in \(\dot x\), completing the square, and using the definition of \(W_{\gamma}(\tau)\), we find that the preceding
expression equals
\begin{eqnarray*}
    \frac{\tau c_\tau}{(n-k+\tau)^2}
    \left(
        \dot x+\frac{L}{c_\tau}
    \right)^2
    +v^{\mathsf T}Dv
    +\frac{1}{(n-k+\tau)^2}
    \begin{pmatrix}Y&S\end{pmatrix}
    W_{\gamma}(\tau)
    \begin{pmatrix}Y\\S\end{pmatrix}.
\end{eqnarray*}
Since \(\tau>0\) and \(c_\tau>0\), the coefficient of the first
square is positive. Using again
\(U^{\mathsf T}v=(Y,S)^{\mathsf T}\), the remaining two terms become
exactly \eqref{eq:root-block-after-x}.
\end{proof}
To prove that \eqref{eq:root-block-after-x} is positive definite, we first study the positive definiteness of $W_{\gamma}(\tau)$:
\begin{proposition}\label{prop:finite-W-positive}
For \(0<\tau<2k^2(n-k)/n\), the matrix
\(W_{\gamma}(\tau)\) is positive definite.
\end{proposition}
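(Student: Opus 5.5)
The plan is to check positive definiteness of the symmetric \(2\times2\) matrix \(W_\gamma(\tau)\) directly: show that its \((1,1)\)-entry is positive and that its determinant is positive. Throughout I will write \(m:=n-k\), \(g:=\gamma-1\) (so \(2-\gamma=1-g\)) and \(c_\tau=2m+(1-g)\tau\). Lemma~\ref{lem:hard-parameter-signs} gives \(c_\tau>0\) and \((1-g)m-g\tau>0\) for \(\tau\) in the hard range~\eqref{eq:tau-range}.

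\textbf{Step 1: the \((1,1)\)-entry.} We have \(W_{11}=(1-g)-\tau g^2/c_\tau\). Multiplying by \(c_\tau>0\) gives
\begin{eqnarray*}
c_\tau W_{11}=(1-g)\bigl(2m+(1-g)\tau\bigr)-g^2\tau=2m(1-g)+(1-2g)\tau .
\end{eqnarray*}
The hypothesis on \(\gamma\) gives \(g<\frac{2k-n}{2k^2+n}<\frac12\). Indeed, \(2(2k-n)<2k^2+n\) always holds, so that fraction is below \(\tfrac12\). Hence both \(1-g\) and \(1-2g\) are positive, and this holds whether \(g\) is positive or negative. Thus \(W_{11}>0\).

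\textbf{Step 2: the determinant.} Write \(W=A-t\,bb^{\mathsf T}\), where \(A\) is the first matrix in \eqref{eq:W-struct-new}, \(t=\tau/c_\tau\) and \(b=(-g,\;m-g\tau)^{\mathsf T}\). I will use the rank-one formula
\begin{eqnarray*}
\det W=\det A-t\,b^{\mathsf T}(\operatorname{adj}A)\,b .
\end{eqnarray*}
A short computation gives \(\det A=\tau\bigl[(1-g)m-g\tau\bigr]\). Expanding the second term,
\begin{eqnarray*}
b^{\mathsf T}(\operatorname{adj}A)\,b
&=&g^2\tau(m-g\tau)-2g^2\tau(m-g\tau)+(1-g)(m-g\tau)^2\\
&=&(m-g\tau)\bigl[(1-g)m-g\tau\bigr].
\end{eqnarray*}
The common factor then pulls out, giving
\begin{eqnarray*}
\det W_\gamma(\tau)=\bigl[(1-g)m-g\tau\bigr]\,\tau\,\frac{c_\tau-m+g\tau}{c_\tau}
=\frac{\tau\,(n-k+\tau)\,\bigl[(n-k)(2-\gamma)-(\gamma-1)\tau\bigr]}{2(n-k)+(2-\gamma)\tau}.
\end{eqnarray*}

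\textbf{Step 3: conclusion.} Every factor in the last expression is positive: \(\tau>0\), \(n-k+\tau>0\), and the bracket and the denominator are positive by Lemma~\ref{lem:hard-parameter-signs}. So \(\det W_\gamma(\tau)>0\). Together with \(W_{11}>0\), Sylvester's criterion shows that \(W_\gamma(\tau)\) is positive definite on the whole interval \(0<\tau<2k^2(n-k)/n\).

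The main obstacle is the determinant algebra in Step~2. The point to verify carefully is that \(\det A\) and \(b^{\mathsf T}(\operatorname{adj}A)b\) share the factor \((1-g)m-g\tau\). That shared factor is what turns the sign question into the first inequality of Lemma~\ref{lem:hard-parameter-signs}.
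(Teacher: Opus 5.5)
Your proof is correct and follows the paper's argument: you compute $(W_\gamma)_{11}$ and $\det W_\gamma$ explicitly, obtaining the same formulas as the paper, and conclude by Sylvester's criterion using Lemma~\ref{lem:hard-parameter-signs}. The differences are cosmetic. You get $(W_\gamma)_{11}>0$ from $\gamma-1<\tfrac12$, whereas the paper's numerator equals $2[(n-k)(2-\gamma)-(\gamma-1)\tau]+\tau$ and so follows from the lemma. You also evaluate the determinant with the rank-one adjugate formula rather than by direct expansion.
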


\begin{proof}
A direct expansion of \eqref{eq:W-struct-new} gives
\begin{eqnarray*}
 (W_{\gamma})_{11}
 &=&
 \frac{
 2(n-k)(2-\gamma)
 +(3-2\gamma)\tau}
 {2(n-k)+(2-\gamma)\tau},
 \\
 (W_{\gamma})_{22}
 &=&
 \frac{
 \tau(n-k+\tau)
 \{n-k-(\gamma-1)\tau\}}
 {2(n-k)+(2-\gamma)\tau},
 \\
 \det W_{\gamma}
 &=&
 \frac{
 \tau(n-k+\tau)
 \{(n-k)(2-\gamma)-(\gamma-1)\tau\}}
 {2(n-k)+(2-\gamma)\tau}.
\end{eqnarray*}
By Lemma~\ref{lem:hard-parameter-signs},
\((W_{\gamma})_{11}>0\) and
\(\det W_{\gamma}>0\). Hence
\(W_{\gamma}(\tau)\) is positive definite.

For later use, we also compute
\begin{eqnarray}\label{eq:finite-W-inverse-11}
    \bigl(W_{\gamma}^{-1}\bigr)_{11}
    =
    \frac{(W_{\gamma})_{22}}
         {\det W_{\gamma}}
    =
    \frac{n-k-(\gamma-1)\tau}
    {(n-k)(2-\gamma)-(\gamma-1)\tau}.
\end{eqnarray}
\end{proof}
Let
\begin{eqnarray*}
    \mathcal M
    :=
    D+\frac{1}{(n-k+\tau)^2}
      U W_{\gamma}(\tau)U^{\mathsf T}
\end{eqnarray*}
be the matrix appearing in \eqref{eq:root-block-after-x}. Recall
\(\rho:=n-k+\tau+R\), and define
\begin{eqnarray}\label{eq:lower-pivot-explicit}
\underline{\mathcal P}(R,\tau)
&:=&
-\frac{2\rho^2-\tau\rho+\tau}
{(n-k+\tau)(\rho+1)}
\nonumber\\
&&
+\frac{\rho^2}{(n-k+\tau)^2}
\Biggl[
\frac{n-k-(\gamma-1)\tau}
{(n-k)(2-\gamma)-(\gamma-1)\tau}
\nonumber\\
&&\qquad
+
\frac{(R-k+1)(R-k+2)^2}
{(n-k+\tau)
 \{2(R-k+2)^2+\tau(R-k+3)\}}
\Biggr]^{-1}.
\end{eqnarray}

Although \(D\) has the single negative diagonal entry \(d_k\), the
term involving \(W_{\gamma}(\tau)\) is positive semidefinite. The
following proposition shows that, whenever
\(\underline{\mathcal P}(R,\tau)>0\), this positive term controls the
negative part from \(D\), and consequently \(\mathcal M\) is
positive definite.

\begin{proposition}\label{prop:negative-pivot-lower-bound}
If \(\underline{\mathcal P}(R,\tau)>0\), then \(\mathcal M\) is
positive definite.
\end{proposition}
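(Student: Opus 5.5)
The plan is to prove positive definiteness of \(\mathcal M\) through a Schur complement with respect to the single index \(k\) that carries the negative diagonal entry \(d_k\).

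\textbf{Block decomposition.} Write \(\widehat W:=W_\gamma(\tau)/(n-k+\tau)^2\), which is positive definite by Proposition~\ref{prop:finite-W-positive}. Let \(U'\) be the first \(k-1\) rows of \(U\), with rows \(u_l=(y_l,1)\), and let \(u_k=(-\rho,1)\) be the last row. Let \(D'=\operatorname{diag}(d_1,\ldots,d_{k-1})\). Then
\[
\mathcal M=\begin{pmatrix} D'+U'\widehat WU'^{\mathsf T} & U'\widehat W u_k^{\mathsf T}\\ u_k\widehat WU'^{\mathsf T} & d_k+u_k\widehat Wu_k^{\mathsf T}\end{pmatrix}.
\]
Since \(d_l>0\) for \(l\le k-1\) by Lemma~\ref{lem:finite-root-formula}, the upper-left block is positive definite. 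Hence \(\mathcal M>0\) if and only if the Schur complement \(s\) is positive. By the Woodbury identity,
\[
\widehat W-\widehat WU'^{\mathsf T}(D'+U'\widehat WU'^{\mathsf T})^{-1}U'\widehat W=(\widehat W^{-1}+G)^{-1},
\qquad G:=U'^{\mathsf T}D'^{-1}U'=\sum_{l=1}^{k-1}\frac{1}{d_l}u_l^{\mathsf T}u_l .
\]
This gives \(s=d_k+u_k(\widehat W^{-1}+G)^{-1}u_k^{\mathsf T}\).

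\textbf{Lower bound for the quadratic term.} For a positive definite \(A\) and any vector \(c\), Cauchy--Schwarz gives \(u^{\mathsf T}A^{-1}u\ge (c\cdot u)^2/(c^{\mathsf T}Ac)\). Taking \(c=e_1\) gives
\[
u_k(\widehat W^{-1}+G)^{-1}u_k^{\mathsf T}\ \ge\ \frac{\rho^2}{(n-k+\tau)^2(W_\gamma^{-1})_{11}+\sum_{l<k}y_l^2/d_l}.
\]
Here \((W_\gamma^{-1})_{11}\) is given explicitly by \eqref{eq:finite-W-inverse-11}. From \eqref{eq:def-dalpha} one has
\[
\frac{y_l^2}{d_l}=(n-k+\tau)\,h(y_l),\qquad h(y):=\frac{y^2(y-1)}{2y^2+\tau(y+1)} .
\]
Evaluating \(d_k\) at \(y_k=-\rho\) gives \(d_k=-\dfrac{2\rho^2-\tau\rho+\tau}{(n-k+\tau)(\rho+1)}\), which is exactly the first term of \(\underline{\mathcal P}\). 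It therefore remains to show
\[
\sum_{l=1}^{k-1}h(y_l)\le h(R-k+2),\qquad \text{whenever } y_l>1,\ \sum_{l=1}^{k-1} y_l=R .
\]
Note that \(h(R-k+2)\) is exactly the expression \(\dfrac{(R-k+1)(R-k+2)^2}{2(R-k+2)^2+\tau(R-k+3)}\) appearing in \eqref{eq:lower-pivot-explicit}. Once this inequality holds, \(s\ge\underline{\mathcal P}(R,\tau)>0\), and the proposition follows.

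\textbf{Main obstacle.} The remaining step is the sum inequality for \(h\). Setting \(g(t):=h(1+t)\) for \(t\ge0\), we have \(g(0)=0\). If \(g\) is convex on \([0,\infty)\), then \(g\) is superadditive, so
\[
\sum_l g(t_l)\le g\Bigl(\sum_l t_l\Bigr)=g(R-k+1)=h(R-k+2),\qquad t_l:=y_l-1 .
\]
I would first try to verify \(g''\ge0\) directly for every \(\tau>0\): compute the numerator of \(g''\) as a polynomial in \(t\) and \(\tau\) and check that its coefficients are nonnegative. If convexity fails for some range of \(\tau\), the fallback is to prove superadditivity directly. This reduces to the two-variable inequality \(g(s)+g(t)\le g(s+t)\), which can be checked by clearing denominators and comparing coefficients; induction on the number of terms then finishes the argument.
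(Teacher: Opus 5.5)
Your proposal is correct, and its linear-algebra part is the paper's argument in different form. The paper first bounds $q^{\mathsf T}W_\gamma(\tau)q\ge Y^2/(W_\gamma(\tau)^{-1})_{11}$. It then applies Lemma~3.2 of Qiu--Zhou to the resulting diagonal-minus-rank-one form in the coordinates $(v_1,\ldots,v_{k-1},Y)$. Your Schur complement with respect to index $k$, followed by Woodbury and Cauchy--Schwarz with $c=e_1$, arrives at exactly the same sufficient condition
\[
d_k+\frac{\rho^2}{(n-k+\tau)^2\bigl(W_\gamma(\tau)^{-1}\bigr)_{11}+\sum_{l<k}y_l^2/d_l}>0 .
\]
Your version is self-contained. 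It also makes visible that, before the final sum estimate, the only loss is discarding the $S$-direction in the Cauchy--Schwarz step. The paper's version is shorter because it cites the lemma.

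The one step you leave open, the sum inequality, does go through as you planned. One finds
\[
h(y)=\frac y2-\frac{2+\tau}{4}+\frac{\tau}{4}\cdot\frac{\tau y+2+\tau}{2y^2+\tau y+\tau},\qquad
h''(y)=\frac{\tau}{2}\cdot\frac{4\tau y^3+12(\tau+2)y^2+12\tau y-4\tau}{(2y^2+\tau y+\tau)^3}.
\]
With $y=1+t$ the numerator becomes $4\tau t^3+24(\tau+1)t^2+48(\tau+1)t+24(\tau+1)$, whose coefficients are all positive. So $g$ is convex with $g(0)=0$, hence superadditive, and your argument closes.

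Convexity is more than you need, though, and the paper avoids any second-derivative computation. Write $h(y)=(y-1)f(y)$ with
\[
f(s)=\frac{s^2}{2s^2+\tau(s+1)}=\bigl(2+\tau(s^{-1}+s^{-2})\bigr)^{-1},
\]
which is increasing. Each $y_l\le R-k+2$, because the other $k-2$ resolvent variables exceed $1$. Therefore
\[
\sum_l(y_l-1)f(y_l)\le f(R-k+2)\sum_l(y_l-1)=h(R-k+2).
\]
In your notation, $g(t)=t\,f(1+t)$ with $f$ increasing is superadditive with no further work.
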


\begin{proof}
Let \(v=(v_1,\ldots,v_k)^{\mathsf T}\) be arbitrary, and set
\begin{eqnarray*}
    e_1:=(1,0)^{\mathsf T},
    \qquad
    q:=U^{\mathsf T}v=(Y,S)^{\mathsf T},
    \qquad
    Y=\sum_{j=1}^{k-1}y_jv_j-\rho v_k.
\end{eqnarray*}
Since \(W_{\gamma}(\tau)>0\), the Cauchy--Schwarz inequality gives
\begin{eqnarray*}
    Y^2
    =
    (e_1^{\mathsf T}q)^2
    \leq
    \bigl(e_1^{\mathsf T}W_{\gamma}(\tau)^{-1}e_1\bigr)
    \bigl(q^{\mathsf T}W_{\gamma}(\tau)q\bigr).
\end{eqnarray*}
Consequently,
\begin{eqnarray}\label{vTMv}
    v^{\mathsf T}\mathcal Mv
    &\geq&
    \sum_{j=1}^{k-1}d_jv_j^2+d_kv_k^2
    +
    \frac{Y^2}{
        (n-k+\tau)^2
        \bigl(W_{\gamma}(\tau)^{-1}\bigr)_{11}
    }.
\end{eqnarray}

Since \(\rho>0\), the change of coordinates
\begin{eqnarray*}
    (v_1,\ldots,v_k)
    \longmapsto
    (v_1,\ldots,v_{k-1},Y)
\end{eqnarray*}
is invertible, with
\begin{eqnarray*}
v_k=\frac{\sum_{j=1}^{k-1}y_jv_j-Y}{\rho}.
\end{eqnarray*}
Thus, in the coordinates
\((v_1,\ldots,v_{k-1},Y)\), the right-hand side of \eqref{vTMv} becomes
\begin{eqnarray*}
    \sum_{j=1}^{k-1}d_jv_j^2
    +
    \frac{Y^2}{
        (n-k+\tau)^2
        \bigl(W_{\gamma}(\tau)^{-1}\bigr)_{11}
    }
    -
    \frac{-d_k}{\rho^2}
    \left(
        \sum_{j=1}^{k-1}y_jv_j-Y
    \right)^2.
\end{eqnarray*}
Here \(d_j>0\) for \(1\leq j\leq k-1\), \(d_k<0\), and
\(\bigl(W_{\gamma}(\tau)^{-1}\bigr)_{11}>0\).
Applying \cite[Lemma~3.2]{QiuZhou2024} with
\begin{eqnarray*}
    a_j^2=d_j,
    \qquad
    b_j=\frac{\sqrt{-d_k}}{\rho}y_j
    \quad(1\leq j\leq k-1),
\end{eqnarray*}
and
\begin{eqnarray*}
    a_k^2
    =
    \frac{1}{
        (n-k+\tau)^2
        \bigl(W_{\gamma}(\tau)^{-1}\bigr)_{11}
    },
    \qquad
    b_k=-\frac{\sqrt{-d_k}}{\rho},
\end{eqnarray*}
we see that this quadratic form is positive definite provided that
\begin{eqnarray*}
    1+
    \frac{d_k}{\rho^2}
    \left[
        (n-k+\tau)^2
        \left(W_{\gamma}(\tau)^{-1}\right)_{11}
        +
        \sum_{j=1}^{k-1}\frac{y_j^2}{d_j}
    \right]
    >0.
\end{eqnarray*}
Equivalently,
\begin{eqnarray*}
    d_k+
    \frac{\rho^2}{
        (n-k+\tau)^2
        \left(W_{\gamma}(\tau)^{-1}\right)_{11}
        +
        \displaystyle\sum_{j=1}^{k-1}\frac{y_j^2}{d_j}
    }
    >0.
\end{eqnarray*}
By \eqref{eq:def-dalpha}, for \(1\leq j\leq k-1\),
\begin{eqnarray*}
    \frac{y_j^2}{d_j}
    =
    (n-k+\tau)(y_j-1)
    \frac{y_j^2}{2y_j^2+\tau(y_j+1)}.
\end{eqnarray*}
The function
\begin{eqnarray*}
    f(s):=
    \frac{s^2}{2s^2+\tau(s+1)}
\end{eqnarray*}
is increasing for \(s\geq1\). Since
\begin{eqnarray*}
    y_j\leq R-k+2,
    \qquad
    \sum_{j=1}^{k-1}(y_j-1)=R-k+1,
\end{eqnarray*}
it follows that
\begin{eqnarray*}
    \sum_{j=1}^{k-1}\frac{y_j^2}{d_j}
    &=&
    (n-k+\tau)
    \sum_{j=1}^{k-1}(y_j-1)f(y_j)
    \\
    &\leq&
    (n-k+\tau)f(R-k+2)
    \sum_{j=1}^{k-1}(y_j-1)
    \\
    &=&
    (n-k+\tau)(R-k+1)
    \frac{(R-k+2)^2}
    {2(R-k+2)^2+\tau(R-k+3)}.
\end{eqnarray*}
Together with \eqref{eq:def-dalpha},
\eqref{eq:finite-W-inverse-11}, and the definition of
\(\underline{\mathcal P}(R,\tau)\), a direct calculation yields
\begin{eqnarray*}
    d_k+
    \frac{\rho^2}{
        (n-k+\tau)^2
        \bigl(W_{\gamma}(\tau)^{-1}\bigr)_{11}
        +
        \displaystyle\sum_{j=1}^{k-1}\frac{y_j^2}{d_j}
    }
    \geq
    \underline{\mathcal P}(R,\tau).
\end{eqnarray*}
Hence by \cite[Lemma~3.2]{QiuZhou2024}, \(\mathcal M\) is positive definite provided that $\underline{\mathcal P}(R,\tau)>0$.
\end{proof}
We are now ready to prove that \(\mathcal M\), and hence the matrix in \eqref{eq:root-block-after-x}, is positive definite.
\begin{lemma}\label{lem:root-block-positive}
Under the parameter assumptions of
Theorem~\ref{thm-crucial-ineq}, there exists
\begin{eqnarray*}
    x_0=x_0(n,k,\gamma)<\infty
\end{eqnarray*}
such that, throughout the hard region, the matrix in
\eqref{eq:root-block-after-x} is positive definite whenever
\(x\geq x_0\).
\end{lemma}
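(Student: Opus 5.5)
The plan is to reduce Lemma~\ref{lem:root-block-positive} to Proposition~\ref{prop:negative-pivot-lower-bound}. It then suffices to show that \(\underline{\mathcal P}(R,\tau)>0\) on every point of the hard region with \(x\geq x_0\). Write \(c:=n-k+\tau\), \(m:=R-k+1\geq0\) (since \(y_j>1\)), \(\rho=c+R=c+m+k-1\), and
\[
A(\tau):=\bigl(W_\gamma(\tau)^{-1}\bigr)_{11}=\frac{n-k-(\gamma-1)\tau}{(n-k)(2-\gamma)-(\gamma-1)\tau}.
\]
By Lemma~\ref{lem:hard-parameter-signs} and \eqref{eq:tau-range}, \(A\) is positive and bounded above and below on \([0,2k^2(n-k)/n]\). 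Set
\[
B:=\frac{m(m+1)^2}{c\{2(m+1)^2+\tau(m+2)\}}.
\]
Clearing the positive denominators, \(\underline{\mathcal P}>0\) is equivalent to
\begin{eqnarray*}
 \rho^2(\rho+1)>c\,(A+B)\,(2\rho^2-\tau\rho+\tau).
\end{eqnarray*}
The first step is to record the elementary bound \(cB<m/2\), and more precisely \(cB=\tfrac m2-\tfrac{\tau m(m+2)}{4(m+1)^2}+O(\tau^2/m)\). This makes the right-hand side at most \((cA+\tfrac m2)(2\rho^2-\tau\rho+\tau)\), up to a correction term that helps.

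The second step is to translate \(x\geq x_0\) into a geometric alternative. By \eqref{eq:finite-inverse-level} and \(1<y_j\leq R<\rho\), we have \(e^x\leq \rho^{k}/\tau\). Hence \(x\to\infty\) forces either \(\tau\to0\) or \(\rho\to\infty\). I would then argue by contradiction and compactness. Suppose there is a sequence in the hard region with \(x_j\to\infty\) and \(\underline{\mathcal P}(R_j,\tau_j)\leq0\). Pass to a subsequence in which \(\tau_j\to\tau_*\in[0,2k^2(n-k)/n]\) and either \(\rho_j\to\infty\) or \(\rho_j\) stays bounded, the latter forcing \(\tau_*=0\). Each case then reduces to a strict inequality in the limit.

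The first case is \(\tau_*=0\) with \(\rho\) bounded. Then \(c\to n-k\), \(A\to 1/(2-\gamma)\) and \(cB\to m/2\), so the limiting inequality reads \(\rho+1>2\bigl(\tfrac{n-k}{2-\gamma}+\tfrac m2\bigr)\). Since \(\rho+1=n+m\), this is exactly \(n(2-\gamma)>2(n-k)\), i.e.\ \(\gamma<2k/n\), which is the hypothesis of Theorem~\ref{thm-crucial-ineq}. Because it is strict and uniform in \(m\) (the \(m\)'s cancel), it persists for small \(\tau\). One must check that the \(O(\tau)\) corrections are uniform in \(m\), including \(m\) large; I would do this by dividing through by \(\rho^2\).

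The second case is \(\rho\to\infty\) with \(\tau\) in a compact range. Expanding in powers of \(\rho\), the \(\rho^3\) terms agree, namely \(\rho^3\) on the left against \(m\rho^2\approx\rho^3\) on the right, after writing \(m=\rho-c-k+1\). So the sign is decided by the \(\rho^2\)-coefficient. I expect it to be
\[
(c+k)-2cA+\tfrac{\tau m}{2\rho}\cdot(\text{bounded})+\tfrac{\tau}{2}\cdot\tfrac{m}{\rho}+\cdots,
\]
coming from \(\rho+1\), from \(-\tau\rho\cdot\tfrac m2\), and from the correction in \(cB\). The goal is to show this coefficient is bounded below by a positive constant depending on \(n,k,\gamma\). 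After substituting \(A\), it should reduce to a positive multiple of \((n-k)(2k-n\gamma)-n(\gamma-1)\tau\) over \((n-k)(2-\gamma)-(\gamma-1)\tau\). These are precisely the two quantities shown positive in Lemma~\ref{lem:hard-parameter-signs}, which is why I expect the second inequality there to be the relevant one.

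The main obstacle is this second-order bookkeeping in the regime \(\rho\to\infty\). The leading terms cancel exactly, and the competing \(\tau m\) contributions, one from \(-\tau\rho\) in the numerator of the first term and one from the \(\tau(m+2)\) correction in \(B\), must be tracked carefully and uniformly in \(\tau\). The uniformity matters especially near \(\tau=2k^2(n-k)/n\), where the constraint \(\gamma<1+\tfrac{2k-n}{2k^2+n}\) enters. I would first verify the expansion symbolically at \(\tau\) fixed and then use continuity in \(\tau\) on the compact interval to obtain a uniform \(\rho_0\). Combining the two cases gives \(x_0\), and Proposition~\ref{prop:negative-pivot-lower-bound} concludes.
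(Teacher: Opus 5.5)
Your proposal is correct and follows essentially the same route as the paper. You reduce to \(\underline{\mathcal P}(R,\tau)>0\) via Proposition~\ref{prop:negative-pivot-lower-bound}, observe that \(x\to\infty\) forces \(\tau\to0\) or \(R\to\infty\), and verify strict positivity on the two boundary faces \(\tau=0\) (where the condition is exactly \(\gamma<2k/n\)) and \(R=\infty\). The paper obtains the required uniformity from the compactification \(s=1/(1+R)\), which plays the same role as your sequential contradiction argument. Your anticipated \(\rho^2\)-coefficient does work out, with the two \(\tau/2\) contributions you identified, to \(c+k+\tau-2cA=n+2\tau-2cA=\frac{(n-k)(2k-n\gamma)-n(\gamma-1)\tau}{(n-k)(2-\gamma)-(\gamma-1)\tau}\). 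This agrees with the paper's value of \(\underline{\mathcal P}(\infty,\tau)\) up to the positive factor \(2/(n-k+\tau)\).
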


\begin{proof}
Proposition~\ref{prop:negative-pivot-lower-bound} reduces the desired
positive definiteness to proving that
\(\underline{\mathcal P}(R,\tau)>0\). By
\eqref{eq:finite-inverse-level}, the variable \(x\) can tend to
\(+\infty\) only when \(\tau\) approaches \(0\) or \(R\) becomes
unbounded. It therefore suffices to establish uniform positivity near the two boundary \(\tau=0\) and \(R=\infty\).

We compactify the interval \(R\in[k-1,\infty)\) by setting
\begin{eqnarray*}
    s=\frac{1}{1+R}.
\end{eqnarray*}
After substituting \(R=s^{-1}-1\) into
\eqref{eq:lower-pivot-explicit}, we see that
\(\underline{\mathcal P}\) extends continuously to
\begin{eqnarray*}
    0\leq s\leq\frac{1}{k},
    \qquad
    0\leq\tau\leq\frac{2k^2(n-k)}{n},
\end{eqnarray*}
where \(s=0\) corresponds to \(R=\infty\). Its values on the two
boundary faces are
\begin{eqnarray}
 \underline{\mathcal P}(R,0)
 &=&
 \frac{
 2(n-k+R)^2(2k-n\gamma)}
 {(n-k)(n-k+1+R)
  \{2(n-k)+(2-\gamma)(R-k+1)\}},
 \label{eq:lower-pivot-tau-zero}
 \\
 \underline{\mathcal P}(\infty,\tau)
 &=&
 \frac{
 2\{(n-k)(2k-n\gamma)-n(\gamma-1)\tau\}}
 {(n-k+\tau)
  \{(n-k)(2-\gamma)-(\gamma-1)\tau\}}.
 \label{eq:lower-pivot-R-infinity}
\end{eqnarray}
At their intersection \((R,\tau)=(\infty,0)\), both expressions take
the value
\begin{eqnarray*}
    \frac{2(2k-n\gamma)}
    {(n-k)(2-\gamma)}>0.
\end{eqnarray*}

The parameter assumption \(\gamma<2k/n\) shows that
\eqref{eq:lower-pivot-tau-zero} is strictly positive. The numerator
and denominator in \eqref{eq:lower-pivot-R-infinity} are positive by
Lemma~\ref{lem:hard-parameter-signs}. Hence, by continuity and
compactness, there exist constants
\begin{eqnarray*}
    0<\tau_0<\frac{2k^2(n-k)}{n},
    \qquad
    R_0>k-1,
\end{eqnarray*}
such that
\begin{eqnarray}\label{eq:positive-boundary-neighborhoods}
    \underline{\mathcal P}(R,\tau)>0
\end{eqnarray}
whenever
\begin{eqnarray*}
    0\leq\tau\leq\tau_0
    \qquad\text{or}\qquad
    R\geq R_0.
\end{eqnarray*}
Consequently, there exists a constant
\(x_0=x_0(n,k,\gamma)<\infty\) such that, throughout the hard region,
\begin{eqnarray*}
    x\geq x_0
    \quad\Longrightarrow\quad
    \underline{\mathcal P}(R,\tau)>0.
\end{eqnarray*}
Proposition~\ref{prop:negative-pivot-lower-bound} then shows that the
matrix in \eqref{eq:root-block-after-x} is positive definite
whenever \(x\geq x_0\). This proves
Lemma~\ref{lem:root-block-positive}.

\end{proof}

Back to \eqref{eq:finite-chain-decomposition}. It remains
to control the second-order chain-rule remainder
\begin{eqnarray*}
    \sum_{l=1}^k
    \phi_{\zeta_l}\ddot\zeta_l.
\end{eqnarray*}
Its nonnegativity ultimately follows from the concavity of the partial sums of the ordered inverse roots. The proof of this concavity is based on an induction argument. We begin with the following induction lemma. Recall that
\begin{eqnarray*}
    p_z(\chi):=\prod_{i=2}^n(\chi-z_i^{-1}).
\end{eqnarray*}
For \(0\leq r\leq n-k-1\), write
\begin{eqnarray*}
    p_z^{(r)}(\chi)
    :=\frac{d^r}{d\chi^r}p_z(\chi),
    \qquad
    m_r:=n-1-r.
\end{eqnarray*}
By Rolle's theorem, the \(m_r\) zeros of
\(p_z^{(r)}\) are all positive. Denote their reciprocals by
\begin{eqnarray}\label{eq:stage-reciprocal-roots}
    x_1^{(r)}(z)\leq\cdots\leq x_{m_r}^{(r)}(z),
\end{eqnarray}
and define the partial sums
\begin{eqnarray}\label{eq:stage-prefix-sums}
    L_j^{(r)}(z)
    :=\sum_{i=1}^j x_i^{(r)}(z),
    \qquad 1\leq j\leq m_r.
\end{eqnarray}
In the inductive step, fix
\begin{eqnarray*}
    0\leq r<n-k-1,
    \qquad
    m:=m_r=n-1-r.
\end{eqnarray*}
For brevity, write
\begin{eqnarray*}
    x_i(z):=x_i^{(r)}(z),
    \quad 1\leq i\leq m,\qquad\eta_i(z):=x_i^{(r+1)}(z),
    \quad 1\leq i\leq m-1.
\end{eqnarray*}
Thus
\begin{eqnarray*}
    x_1(z)\leq\cdots\leq x_m(z),
    \qquad
    \eta_1(z)\leq\cdots\leq\eta_{m-1}(z).
\end{eqnarray*}

\begin{lemma}\label{lem:one-derivative-preserves-prefix-concavity}
For \(1\leq l\leq m\), set
\begin{eqnarray*}
    L_l(z):=L_l^{(r)}(z)
    =\sum_{i=1}^{l}x_i(z).
\end{eqnarray*}
Suppose that every \(L_l\) is concave in \(z\). Then, for each
\(1\leq j\leq m-1\), the function
\begin{eqnarray*}
    z\longmapsto\sum_{i=1}^j\eta_i(z)
    =L_j^{(r+1)}(z)
\end{eqnarray*}
is also concave.
\end{lemma}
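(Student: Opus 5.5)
We plan to reduce the statement to a structural property of one fixed map: the map sending the ordered reciprocal roots of a real-rooted polynomial with positive roots to the ordered reciprocal roots of its derivative. Write $r_i:=1/x_i$ for the zeros of $p_z^{(r)}$. The zeros $t$ of $p_z^{(r+1)}$ satisfy $\sum_{i=1}^m (t-r_i)^{-1}=0$. Substituting $s=1/t$, this becomes
\begin{eqnarray*}
 \Phi(s;x):=\sum_{i=1}^m\frac{x_i}{x_i-s}=0 .
\end{eqnarray*}
Hence $\eta_1\leq\cdots\leq\eta_{m-1}$ are the zeros of $\Phi(\cdot;x)$, and they depend only on $x=(x_1,\ldots,x_m)$. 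By Rolle's theorem (order reversal under $s=1/t$ preserves interlacing) they interlace, $x_i\leq\eta_i\leq x_{i+1}$. So
\begin{eqnarray*}
 \sum_{i=1}^j\eta_i(z)=G_j\bigl(x(z)\bigr)
\end{eqnarray*}
for a fixed symmetric function $G_j$ on the ordered cone $\{x_1\leq\cdots\leq x_m\}$.

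We then change variables from $x$ to the prefix sums $L=(L_1,\ldots,L_m)$, so that $x_l=L_l-L_{l-1}$, and set $H_j(L):=G_j(x(L))$. The composition principle reads: if $H_j$ is concave and nondecreasing in each $L_l$, and every $L_l(z)$ is concave by hypothesis, then $z\mapsto H_j(L(z))$ is concave. So it suffices to prove two properties of $H_j$.

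\textbf{(a) Monotonicity.} $\partial H_j/\partial L_l\geq0$, that is, $\partial G_j/\partial x_l-\partial G_j/\partial x_{l+1}\geq0$ for $l<m$, together with $\partial G_j/\partial x_m\geq0$. We will prove this by implicit differentiation of $\Phi(\eta_i;x)=0$:
\begin{eqnarray*}
 \frac{\partial\eta_i}{\partial x_l}
 =-\frac{\partial_{x_l}\Phi}{\partial_s\Phi}
 =\frac{\eta_i}{(x_l-\eta_i)^2}\Big/\sum_{k}\frac{x_k}{(x_k-\eta_i)^2}.
\end{eqnarray*}
This gives positive weights, and $\sum_l\partial\eta_i/\partial x_l=1$ by homogeneity, so each $\eta_i$ is an average of the $x_l$. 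The required inequality says that, summed over $i\leq j$, these weights are ``more concentrated on smaller indices'', i.e.\ the weight vector is decreasing in the majorization sense. We expect to deduce this from the interlacing and the fact that $(x_l-\eta_i)^{-2}$ is largest for the $x_l$ adjacent to $\eta_i$.

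\textbf{(b) Concavity.} $H_j$ is concave in $L$; equivalently, $G_j$ has nonpositive second variation along every direction of the ordered cone, measured in the prefix-sum geometry. For this we intend to use a hyperbolic-polynomial representation rather than raw second derivatives. We note that $\eta$ are the roots in $s$ of $\frac{d}{d\varepsilon}\big|_{\varepsilon=0}\prod_k\bigl((1+\varepsilon)x_k-s\bigr)$. Equivalently, the reciprocal roots of $p_z^{(r+1)}$ are the ``eigenvalues'' of a derivative (Renegar) polynomial of $\prod_k X_k$ restricted to a line. Derivatives of hyperbolic polynomials are hyperbolic with larger cones, and for hyperbolic polynomials the sum of the $j$ smallest eigenvalues is concave (Bauschke--G\"uler--Lewis--Sendov). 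Combining this with the monotonicity in (a) should give concavity of $H_j$ in $L$.

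We expect (b) to be the main obstacle, namely making the passage from concavity in $x$-space to concavity in $L$-space rigorous with the reversal $s=1/t$ built in. Our first attempt will be direct: compute $D^2_LH_j$ from the implicit-function formulas at a point with distinct roots, and show that it is a negative semidefinite combination of rank-one terms $-(\cdot)^2$, arising from $\partial_s^2\Phi$ and the mixed derivatives. Points with repeated roots will then be handled by approximation, since concavity is a closed condition.
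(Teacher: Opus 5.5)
\textbf{There is a genuine gap: the two properties that carry all the content, (a) and (b), are not proved, and the routes you sketch for them do not work.}

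Your reduction itself is correct. The ordered region in $L$-space is convex. So if $H_j$ is concave and nondecreasing in each $L_l$ there, composing with the concave $L_l(z)$ gives the lemma, and both properties are in fact true.

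\textbf{Problems with (a).} The claim $\sum_l\partial\eta_i/\partial x_l=1$ is false. $\eta_i$ is homogeneous of degree one in $x$, not translation-equivariant, so Euler's identity gives $\sum_l x_l\,\partial\eta_i/\partial x_l=\eta_i$. For instance, for $m=2$ one has $\eta_1=2x_1x_2/(x_1+x_2)$, and the weights sum to $2(x_1^2+x_2^2)/(x_1+x_2)^2$. More importantly, for fixed $i$ the weights $\eta_i(x_l-\eta_i)^{-2}/\partial_s\Phi$ are unimodal around $\eta_i$, not decreasing in $l$. For $l<j$ and each $l+1\le i\le j$, interlacing gives $x_l\le x_{l+1}\le\eta_i$, so all these terms favor $x_{l+1}$ over $x_l$. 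The required inequality $\sum_{i\le j}\partial_{x_l}\eta_i\ge\sum_{i\le j}\partial_{x_{l+1}}\eta_i$ therefore needs a global cancellation, and it can be sharp. Take $x=(1,2,M)$, $j=2$, $l=1$ and let $M\to\infty$. Then $\eta_{1,2}\to 3\mp\sqrt3$, $\partial_{x_1}\eta_1,\partial_{x_1}\eta_2\to 1$, $\partial_{x_2}\eta_1\to 1-\sqrt3/2$, and $\partial_{x_2}\eta_2\to 1+\sqrt3/2$, so both sums tend to $2$. An adjacency heuristic cannot produce this.

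\textbf{Problems with (b).} You place the difficulty in the wrong step. Since $L\mapsto x$ is linear, concavity of $H_j$ in $L$ is the same as concavity of $G_j$ in $x$ on the ordered cone, and monotonicity plays no role in that passage. What is actually missing is concavity in $x$ itself. The Bauschke--G\"uler--Lewis--Sendov theorem concerns eigenvalues of a \emph{fixed} hyperbolic polynomial in a \emph{fixed} direction, as functions of the point, and none of the natural realizations fits:
\begin{itemize}
\item Applied to $\sigma_{m-1}$ in direction $\mathbf 1$, it gives concavity in the roots $1/x_i$, which the nonlinear reciprocal map does not preserve.
\item In your formulation, the $\varepsilon$-derivative at $0$ of $\prod_k((1+\varepsilon)x_k-s)$ is the derivative of $\prod_kX_k$ at $x-s\mathbf 1$ in the direction $x$, which moves with the point.
\item Realizing it through a fixed polynomial, such as $m\sigma_m(Y)+Y_0\sigma_{m-1}(Y)$ at $(0,x)$, forces the direction $(-1,\mathbf 1)$, where that polynomial vanishes.
\end{itemize}

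\textbf{The missing idea.} You need a representation of $\sum_{i\le j}\eta_i$ as a pointwise minimum of linear functions of $x$ over an $x$-independent class. The paper obtains it as follows. It identifies the $\eta_i$ with the eigenvalues of $\mathcal H=(V^{\mathsf T}X^{-1}V)^{-1}$ and proves $u^{\mathsf T}\mathcal Hu=\min_{V^{\mathsf T}\omega=u}\omega^{\mathsf T}X\omega$. Ky Fan then gives $\sum_{i\le j}\eta_i=\min_W\sum_i x_i|W_{i\bullet}|^2$ over a permutation-invariant admissible class. Restricting to decreasing row norms and applying Abel summation writes $H_j$ as a minimum of nonnegative linear combinations of the $L_l$, which yields (a) and (b) simultaneously.

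Equivalently, your own plan closes once such a formula shows that the symmetric function $G_j$ is concave on the positive orthant. Then (a) for $l<m$ is just Schur-concavity, $\partial G_j/\partial x_m\ge0$ follows from your positive implicit-function weights, and (b) is immediate because $L\mapsto x$ is linear.
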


\begin{proof}
The main idea is to express each partial sum of the new inverse roots as
the infimum of nonnegative linear combinations of the old partial sums.
We divide the proof into three steps.

\medskip
\noindent
\textbf{Step 1: A matrix representation of the new inverse roots.}

Fix \(z\), set
\begin{eqnarray*}
    X:=\operatorname{diag}(x_1,\ldots,x_m),
    \qquad
    e:=\frac{1}{\sqrt m}(1,\ldots,1)^{\mathsf T}.
\end{eqnarray*}
Choose a constant matrix \(V\in\mathbb R^{m\times(m-1)}\) whose columns form an orthonormal basis of \(e^\perp\). That is,
\begin{eqnarray*}
    V^{\mathsf T}V=I_{m-1},
    \qquad
    VV^{\mathsf T}=I_m-ee^{\mathsf T}.
\end{eqnarray*}

We now show how the characteristic polynomial of the
compressed matrix \(V^{\mathsf T}X^{-1}V\) is related to
\(p_z^{(r+1)}\). Set
\begin{eqnarray*}
    A(\chi):=\chi I_m-X^{-1},
    \qquad
    Q:=(V,e).
\end{eqnarray*}
We have
\begin{eqnarray*}
    Q^{\mathsf T}A(\chi)Q
    =
    \begin{pmatrix}
        V^{\mathsf T}A(\chi)V
        &
        V^{\mathsf T}A(\chi)e
        \\
        e^{\mathsf T}A(\chi)V
        &
        e^{\mathsf T}A(\chi)e
    \end{pmatrix},
\end{eqnarray*}
and
\begin{eqnarray*}
    V^{\mathsf T}A(\chi)V
    =\chi I_{m-1}-V^{\mathsf T}X^{-1}V.
\end{eqnarray*}

By the definition of the adjugate, the \((m,m)\)-entry of
\(\operatorname{adj}(Q^{\mathsf T}A(\chi)Q)\) is the determinant of
the submatrix obtained by deleting the last row and the last column of
\(Q^{\mathsf T}A(\chi)Q\). Hence
\begin{eqnarray*}
    \det\!\left(
        \chi I_{m-1}-V^{\mathsf T}X^{-1}V
    \right)
    =
    \left[
        \operatorname{adj}
        \bigl(Q^{\mathsf T}A(\chi)Q\bigr)
    \right]_{mm}.
\end{eqnarray*}
The adjugate satisfies
\begin{eqnarray*}
    \operatorname{adj}(Q^{\mathsf T}AQ)
    =Q^{\mathsf T}\operatorname{adj}(A)Q
\end{eqnarray*}
under orthogonal similarity. The last column of \(Q\) is \(e\), and therefore
\begin{eqnarray}\label{eq:compressed-characteristic-polynomial}
    \det\!\left(
        \chi I_{m-1}-V^{\mathsf T}X^{-1}V
    \right)
    =e^{\mathsf T}
    \operatorname{adj}(\chi I_m-X^{-1})e.
\end{eqnarray}
A direct calculation yields
\begin{eqnarray*}
    e^{\mathsf T}
    \operatorname{adj}(\chi I_m-X^{-1})e
    =
    \frac{1}{m}
    \sum_{i=1}^m
    \prod_{l\ne i}(\chi-x_l^{-1})
    =
    \frac{(m-1)!}{(n-1)!}\,p_z^{(r+1)}(\chi).
\end{eqnarray*}
Substituting this into
\eqref{eq:compressed-characteristic-polynomial}, we obtain
\begin{eqnarray}\label{eq:derivative-compression-identity}
    \det\!\left(
        \chi I_{m-1}-V^{\mathsf T}X^{-1}V
    \right)
    =\frac{(m-1)!}{(n-1)!}\,p_z^{(r+1)}(\chi).
\end{eqnarray}
Thus the zeros of \(p_z^{(r+1)}\) are precisely the eigenvalues of
\(V^{\mathsf T}X^{-1}V\). Consequently, their reciprocals
\(\eta_1,\ldots,\eta_{m-1}\) are precisely the eigenvalues of the
positive-definite matrix
\begin{eqnarray*}
    \mathcal H
    :=\left(V^{\mathsf T}X^{-1}V\right)^{-1}.
\end{eqnarray*}

\medskip
\noindent
\textbf{Step 2: A variational formula for the partial sums.}

We \textbf{claim} that for every \(u\in\mathbb R^{m-1}\),
\begin{eqnarray}\label{eq:compressed-energy}
    u^{\mathsf T}\mathcal H u
    =\min_{V^{\mathsf T}\omega=u}
    \omega^{\mathsf T}X\omega.
\end{eqnarray}
Indeed, define
\begin{eqnarray*}
    \omega_*
    :=X^{-1}V\mathcal H u.
\end{eqnarray*}
Since \(\mathcal H^{-1}=V^{\mathsf T}X^{-1}V\), we have
\begin{eqnarray*}
    V^{\mathsf T}\omega_*
    =V^{\mathsf T}X^{-1}V\mathcal H u
    =u,
\end{eqnarray*}
so \(\omega_*\) satisfies the constraint.

Let \(\omega\) be any other vector satisfying
\(V^{\mathsf T}\omega=u\), and set \(h:=\omega-\omega_*\). Then
\(V^{\mathsf T}h=0\), and hence
\begin{eqnarray*}
    h^{\mathsf T}X\omega_*
    =h^{\mathsf T}V\mathcal H u
    =(V^{\mathsf T}h)^{\mathsf T}\mathcal H u
    =0.
\end{eqnarray*}
It follows that
\begin{eqnarray*}
    \omega^{\mathsf T}X\omega
    =
    (\omega_*+h)^{\mathsf T}X(\omega_*+h)
    =
    \omega_*^{\mathsf T}X\omega_*+h^{\mathsf T}Xh
    \geq
    \omega_*^{\mathsf T}X\omega_*.
\end{eqnarray*}
As \(X\) is positive definite, equality holds only when \(h=0\), so
\(\omega_*\) is the unique minimizer. Finally,
\begin{eqnarray*}
    \omega_*^{\mathsf T}X\omega_*=
    \omega_*^{\mathsf T}V\mathcal H u=
    (V^{\mathsf T}\omega_*)^{\mathsf T}\mathcal H u=
    u^{\mathsf T}\mathcal H u,
\end{eqnarray*}
which proves \eqref{eq:compressed-energy}.

Recall that
\(\eta_1\leq\cdots\leq\eta_{m-1}\) are the eigenvalues of the real
symmetric matrix \(\mathcal H\). The standard variational
characterization of the sum of its \(j\) smallest eigenvalues gives
\begin{eqnarray}\label{eq:eigenvalue-sum-minimum}
    \sum_{i=1}^j\eta_i(z)
    =
    \min_{\substack{
        U\in\mathbb R^{(m-1)\times j}\\
        U^{\mathsf T}U=I_j
    }}
    \operatorname{tr}
    \bigl(U^{\mathsf T}\mathcal H(z)U\bigr).
\end{eqnarray}
For a fixed \(U\), applying \eqref{eq:compressed-energy} to each
column of \(U\) yields
\begin{eqnarray*}
    \operatorname{tr}
    \bigl(U^{\mathsf T}\mathcal H(z)U\bigr)
    =
    \min_{V^{\mathsf T}W=U}
    \operatorname{tr}
    \bigl(W^{\mathsf T}X(z)W\bigr).
\end{eqnarray*}
Substituting this into \eqref{eq:eigenvalue-sum-minimum} and eliminating
the intermediate variable \(U=V^{\mathsf T}W\), we obtain
\begin{eqnarray}\label{eq:prefix-variational}
    \sum_{i=1}^j\eta_i(z)=\min_{\substack{
        W\in\mathbb R^{m\times j}\\
        W^{\mathsf T}VV^{\mathsf T}W=I_j
    }}
    \operatorname{tr}
    \bigl(W^{\mathsf T}X(z)W\bigr)=
    \min_{\substack{
        W\in\mathbb R^{m\times j}\\
        W^{\mathsf T}(I_m-ee^{\mathsf T})W=I_j
    }}
    \sum_{i=1}^m
    x_i(z)\lvert W_{i\bullet}\rvert^2,
\end{eqnarray}
where \(W_{i\bullet}\) denotes the \(i\)-th row of \(W\). In particular, the class of admissible matrices in
\eqref{eq:prefix-variational} depends only on \(m\) and \(j\) and is
independent of \(z\).

\medskip
\noindent
\textbf{Step 3: Row rearrangement and Abel summation.}

For a fixed admissible matrix \(W\) in
\eqref{eq:prefix-variational}, set
\begin{eqnarray*}
    c_i:=\lvert W_{i\bullet}\rvert^2.
\end{eqnarray*}
Every permutation matrix \(P\) satisfies \(Pe=e\), so replacing \(W\)
by \(PW\) preserves the constraint. Since
\(x_1(z)\leq\cdots\leq x_m(z)\), repeated pairwise row exchanges show
that the minimum in \eqref{eq:prefix-variational} may be restricted to
admissible matrices satisfying
\begin{eqnarray*}
    c_1\geq c_2\geq\cdots\geq c_m\geq0.
\end{eqnarray*}
This restricted class of admissible matrices is independent of \(z\).

Abel summation gives
\begin{eqnarray}\label{eq:abel-prefix-combination}
    \sum_{i=1}^m c_i x_i(z)
    =
    c_mL_m(z)
    +
    \sum_{l=1}^{m-1}
    (c_l-c_{l+1})L_l(z).
\end{eqnarray}
All the coefficients on the right-hand side are nonnegative constants
independent of \(z\). Hence, for every fixed \(W\) in the restricted
class, the right-hand side of
\eqref{eq:abel-prefix-combination} is concave in \(z\). Taking the
pointwise minimum over this fixed class in
\eqref{eq:prefix-variational}, we conclude that
\begin{eqnarray*}
    z\longmapsto
    \sum_{i=1}^j\eta_i(z)
    =
    L_j^{(r+1)}(z)
\end{eqnarray*}
is concave.
\end{proof}

Using the preceding induction lemma, we now prove the concavity of the partial sums.
\begin{lemma}\label{lem:prefix-concavity-input}
On the region where the G{\aa}rding roots are distinct, relabel the
inverse roots in increasing order as
\begin{eqnarray*}
    \zeta_{(1)}<\cdots<\zeta_{(k)}.
\end{eqnarray*}
Let \(z=z(t)\) be an affine line segment lying entirely in the region where the G{\aa}rding roots are distinct. Then, for every \(1\leq j\leq k\), we have
\begin{eqnarray}\label{eq:prefix-concavity-input}
    \frac{d^2}{dt^2}
    \sum_{l=1}^j
    \zeta_{(l)}(z(t))
    \leq0.
\end{eqnarray}
\end{lemma}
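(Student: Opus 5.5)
The plan is an induction on the derivative order \(r\), starting from \(r=0\) and using Lemma~\ref{lem:one-derivative-preserves-prefix-concavity} as the inductive step. At \(r=0\) the zeros of \(p_z^{(0)}=p_z\) are \(z_i^{-1}\), \(2\leq i\leq n\), so the reciprocal roots \(x_i^{(0)}(z)\) are the coordinates \(z_2,\ldots,z_n\) arranged in increasing order. The partial sum \(L_j^{(0)}(z)\) is then the sum of the \(j\) smallest coordinates of \(z\). This can be written as
\begin{eqnarray*}
    L_j^{(0)}(z)=\min_{|S|=j}\sum_{i\in S}z_i,
\end{eqnarray*}
a minimum of finitely many linear functions. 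Hence every \(L_j^{(0)}\) is concave on all of \(\mathbb R^{n-1}\), with no distinctness assumption needed.

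Next, Lemma~\ref{lem:one-derivative-preserves-prefix-concavity} is applied repeatedly for \(r=0,1,\ldots,n-k-2\). At each stage all partial sums \(L_l^{(r)}\) are concave, so the lemma gives concavity of all \(L_j^{(r+1)}\). After \(n-k-1\) steps we obtain concavity of every \(L_j^{(n-k-1)}\), \(1\leq j\leq k\).

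It remains to identify these functions with the inverse Gårding roots. Since \(p_z^{(n-k-1)}(t)=(n-k-1)!\,\sigma_k(t\mathbf 1-z^{-1})=(n-k-1)!\,\mathcal G_z(t)\), the zeros of \(p_z^{(n-k-1)}\) are exactly \(\chi_1,\ldots,\chi_k\). Their reciprocals, ordered increasingly, are \(\zeta_{(1)}\leq\cdots\leq\zeta_{(k)}\). Thus \(L_j^{(n-k-1)}(z)=\sum_{l=1}^j\zeta_{(l)}(z)\) is a concave function of \(z\) on the domain where all \(z_i>0\). The domain matters here: Rolle's theorem needs the zeros \(z_i^{-1}\) to be positive so that all intermediate roots are positive and reciprocals make sense. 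The set \(\{z_i>0\}\) is convex, so concavity there is meaningful, and each application of the lemma stays inside it.

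Finally, I restrict to the affine segment \(z(t)\). Along it, the ordered inverse roots are distinct, so each \(\zeta_{(l)}(z(t))\) is a simple root of a polynomial whose coefficients depend smoothly on \(t\). By the implicit function theorem each \(\zeta_{(l)}(z(t))\) is therefore smooth, and so are the partial sums. A \(C^2\) function of \(t\) that is concave, being the restriction of a concave function to a line, has nonpositive second derivative, which gives \eqref{eq:prefix-concavity-input}.

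I expect the main point requiring care to be that Lemma~\ref{lem:one-derivative-preserves-prefix-concavity} assumes concavity of the \(L_l^{(r)}\) for all \(z\) in a convex domain, not merely along the segment. Intermediate-stage roots may coincide even where the final Gårding roots are distinct, so concavity must hold without any distinctness assumption. The proof of the lemma only uses ordered values and a min-formula, never differentiability, so concavity passes through stages with repeated roots. Smoothness is needed only at the final stage on the given segment, where it holds by assumption.
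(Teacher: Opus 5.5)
Your proposal is correct and follows the paper's proof essentially step for step. It verifies the base case $r=0$ by writing $L_j^{(0)}$ as a minimum of linear functions, applies Lemma~\ref{lem:one-derivative-preserves-prefix-concavity} $n-k-1$ times, identifies $L_j^{(n-k-1)}$ with $\sum_{l\le j}\zeta_{(l)}$ via $p_z^{(n-k-1)}=(n-k-1)!\,\mathcal G_z$, and restricts to the affine segment. Your extra remarks are sound and only make explicit what the paper leaves implicit: the convex domain $\{z_i>0\}$, the possibility of repeated roots at intermediate stages, and the smoothness of the ordered roots along the segment.
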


\begin{proof}
Since \(z(t)=z+tw\) is affine in \(t\), it suffices to show that
\begin{eqnarray*}
    L_j^{(n-k-1)}(z)
    =
    \sum_{l=1}^j\zeta_{(l)}(z)
\end{eqnarray*}
is concave in \(z\).

By repeated application of Lemma~\ref{lem:one-derivative-preserves-prefix-concavity}, it remains only to verify the base case \(r=0\). In that case, the numbers \(x_i^{(0)}(z)\) are the increasing
rearrangement of \(z_2,\ldots,z_n\). Therefore,
\begin{eqnarray}\label{eq:initial-prefix-minimum}
    L_j^{(0)}(z)
    =\min_{\substack{I\subset\{2,\ldots,n\}\\ |I|=j}}
      \sum_{i\in I}z_i.
\end{eqnarray}
This is the pointwise minimum of finitely many linear functions and is
therefore concave. This proves \eqref{eq:prefix-concavity-input}.
\end{proof}

It follows from \eqref{eq:Dphi-root} that
\begin{eqnarray}\label{eq:phi-zeta-explicit}
    \phi_{\zeta_l}
    =
    -\frac{
        (y_l-1)^2(y_l+\tau)
    }{
        y_l(n-k+\tau)
    }
    <0.
\end{eqnarray}
Indeed, for \(1\leq l\leq k-1\), the sign follows immediately from
\(y_l>1\). For the unique negative resolvent coordinate
\(y_k=-\rho\), we have
\begin{eqnarray*}
    y_k+\tau=-(n-k+R)<0.
\end{eqnarray*}
Together with \(y_k<0\), this again gives
\(\phi_{\zeta_k}<0\).

For \(i\neq j\), direct subtraction gives
\begin{eqnarray}\label{eq:finite-gradient-divided-difference}
    (n-k+\tau)
    \frac{
        \phi_{\zeta_i}-\phi_{\zeta_j}
    }{
        y_i-y_j
    }
    =
    2-y_i-y_j-\tau+\frac{\tau}{y_i y_j}.
\end{eqnarray}
If \(y_i,y_j>1\), the right-hand side is strictly negative. If
\(y_i=y_k=-\rho<0\) and \(1\leq j\leq k-1\), it is equal to
\begin{eqnarray*}
    n-k+R+2-y_j-\frac{\tau}{\rho y_j}>0,
\end{eqnarray*}
since \(R-y_j\geq0\) and
\(\tau/(\rho y_j)<1\).

The unique negative resolvent coordinate corresponds to
\(\zeta_k\in(0,1)\), while all the other inverse roots are
greater than \(1\). Moreover, \(y\) is strictly decreasing as a
function of \(\zeta\). Therefore,
\eqref{eq:finite-gradient-divided-difference} gives
\begin{eqnarray}\label{eq:finite-gradient-order}
    \phi_{\zeta_{(1)}}
    \leq\cdots\leq
    \phi_{\zeta_{(k)}}<0.
\end{eqnarray}

We now combine the partial-sum concavity in
Lemma~\ref{lem:prefix-concavity-input} with the coefficient ordering
in \eqref{eq:finite-gradient-order} to prove that the chain-rule
remainder is nonnegative.
\begin{lemma}\label{lem:finite-composition}
Let \(z=z(t)\) be an affine line segment contained in the region of
distinct roots. Then
\begin{eqnarray}\label{eq:finite-composition-remainder}
    \sum_{l=1}^k
    \phi_{\zeta_l}\zeta_l''(t)
    \geq0.
\end{eqnarray}
\end{lemma}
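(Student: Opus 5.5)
Since the sum $\sum_{l=1}^k\phi_{\zeta_l}\zeta_l''(t)$ is invariant under relabeling, we rewrite it in increasing order of the inverse roots. Along the segment the roots are distinct, so the ordering $\zeta_{(1)}(t)<\cdots<\zeta_{(k)}(t)$ is fixed and each $\zeta_{(l)}(t)$ is smooth in $t$. Set $c_l:=\phi_{\zeta_{(l)}}$, evaluated at the base point $t=0$, and
\begin{eqnarray*}
    S_j(t):=\sum_{l=1}^j\zeta_{(l)}(t),\qquad 1\leq j\leq k,
\end{eqnarray*}
with $S_0\equiv0$. Then $\zeta_{(l)}''=S_l''-S_{l-1}''$.

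Abel summation gives
\begin{eqnarray*}
    \sum_{l=1}^k c_l\,\zeta_{(l)}''
    =
    c_kS_k''+\sum_{j=1}^{k-1}(c_j-c_{j+1})S_j''.
\end{eqnarray*}
We read off the signs term by term.
\begin{itemize}
    \item By \eqref{eq:finite-gradient-order}, $c_j-c_{j+1}\leq0$ for $1\leq j\leq k-1$, and $c_k<0$.
    \item By Lemma~\ref{lem:prefix-concavity-input}, $S_j''\leq0$ for every $1\leq j\leq k$.
\end{itemize}
Hence every summand is a product of two nonpositive factors, so it is nonnegative. This gives \eqref{eq:finite-composition-remainder} at $t=0$.

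If the inequality is wanted at every $t$, the same argument applies at each point of the segment. This requires \eqref{eq:finite-gradient-order} at that point, which holds because the segment stays in the hard region, on the one-negative branch. Alternatively, since the remainder is only used at the base point, the statement at $t=0$ suffices.

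The main obstacle is the ordering \eqref{eq:finite-gradient-order}, specifically that the coefficients increase with $\zeta$. This must be checked against the direction of the map $y\mapsto\zeta$. The divided difference \eqref{eq:finite-gradient-divided-difference} shows that $\phi_\zeta$ is decreasing in $y$ when both $y$-values exceed $1$. It is increasing when passing from a positive $y_j$ to $y_k=-\rho$; there the larger $y$ (namely $y_j>1$) corresponds to the larger $\zeta$? No: $y$ is decreasing in $\zeta$ on each branch, but $\zeta_k\in(0,1)$ is the smallest inverse root. So $\zeta_{(1)}=\zeta_k$ would have coefficient $c_1=\phi_{\zeta_k}$.

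We must therefore verify carefully that $\phi_{\zeta_k}\leq\phi_{\zeta_j}$ for the $j<k$. The divided difference with $y_i=-\rho<y_j$ being positive gives $\phi_{\zeta_k}-\phi_{\zeta_j}=(\text{positive})\cdot(-\rho-y_j)/(n-k+\tau)<0$, which is consistent.

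Among the $y_j>1$, larger $\zeta$ means smaller $y$. The negative divided difference then means smaller $y$ gives larger $\phi_\zeta$, so $\phi_\zeta$ is increasing in $\zeta$, again consistent. I would write out this check explicitly and then conclude as above.
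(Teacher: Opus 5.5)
Your proposal is correct and is essentially the paper's proof: you rewrite the remainder in increasing order of the inverse roots, apply Abel summation against the prefix sums $S_j$, and read off the signs from Lemma~\ref{lem:prefix-concavity-input} ($S_j''\leq0$) and the ordering \eqref{eq:finite-gradient-order}; your explicit divided-difference check, showing that $\zeta_{(1)}=\zeta_k$ carries the smallest coefficient, confirms that ordering. Your side claim that the ordering persists along the whole segment ``because it stays in the hard region'' does not follow from the hypotheses, and \eqref{eq:phi-zeta-explicit} is only a base-point formula anyway, but the claim is unnecessary: with the coefficients frozen at the base point, $S_j''(t)\leq0$ for every $t$ already gives the inequality wherever it is used.
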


\begin{proof}
Set
\begin{eqnarray*}
    B_j(t)
    :=
    \sum_{l=1}^j
    \zeta_{(l)}(z(t)),
    \qquad
    B_0:=0.
\end{eqnarray*}
By Lemma~\ref{lem:prefix-concavity-input}, \(B_j''\leq0\). Moreover,
\begin{eqnarray*}
    \zeta_{(l)}''
    =
    B_l''-B_{l-1}''.
\end{eqnarray*}
It follows that
\begin{eqnarray*}
    \sum_{l=1}^k
    \phi_{\zeta_{(l)}}\zeta_{(l)}''
    &=&
    \phi_{\zeta_{(k)}}B_k''
    +
    \sum_{j=1}^{k-1}
    \left(
        \phi_{\zeta_{(j)}}
        -
        \phi_{\zeta_{(j+1)}}
    \right)B_j''
    \geq0,
\end{eqnarray*}
where we have used \(B_j''\leq0\) and
\eqref{eq:finite-gradient-order}. This proves
\eqref{eq:finite-composition-remainder}.
\end{proof}

We are now ready to complete the proof of
Theorem~\ref{thm-crucial-ineq}.

\begin{proof}[Proof of Theorem~\ref{thm-crucial-ineq}]
Let \(x_0\) be given by
Lemma~\ref{lem:root-block-positive}, and set
\[
    F_{\mathrm{hard}}
    :=
    \frac{(n-1)!}{k!(n-k)!}e^{-(x_0+1)},
    \qquad
    \eta_*
    :=
    \min\{\eta_{\mathrm{easy}},F_{\mathrm{hard}}\}.
\]
Suppose first that \(a=\lambda_1=1\). In the hard region,
\(F\leq F_{\mathrm{hard}}\) implies \(x\geq x_0+1\) by
\eqref{eq:inverse-level-x}. When the G{\aa}rding roots are distinct,
Lemmas~\ref{lem:finite-normal-form},
\ref{lem:root-block-positive}, and
\ref{lem:finite-composition}, together with
\eqref{eq:finite-chain-decomposition}, give
\(\mathcal Q_\gamma(\lambda;\xi)/P\geq0\), and hence
\(\mathcal Q_\gamma(\lambda;\xi)\geq0\).

The same conclusion holds when some G{\aa}rding roots coincide.
Indeed, approximate \(\lambda\) by normalized admissible vectors
\(\lambda^{(m)}\) in the hard region whose G{\aa}rding roots are
distinct. The corresponding \(x_m\) converge to \(x\), so
\(x_m\geq x_0\) for all sufficiently large \(m\). Applying the
preceding argument and then letting \(m\to\infty\) proves the
hard-region estimate.

Lemma~\ref{lem:easy} handles the complementary region
\(P/F\leq1+n/(2k^2)\). Thus, under the normalization
\(a=\lambda_1=1\), we have
\(\mathcal Q_\gamma(\lambda;\xi)\geq0\) whenever
\(0<F\leq\eta_*\).

Finally, for a general scale, set
\(\widehat\lambda:=\lambda/a\) and
\(\widehat\xi:=\xi/a\). Then
\(\sigma_k(\widehat\lambda)=\sigma_k(\lambda)/a^k\), while
homogeneity gives
\[
    \mathcal Q_\gamma(\lambda;\xi)
    =
    a^{k-1}
    \mathcal Q_\gamma(\widehat\lambda;\widehat\xi).
\]
The normalized result therefore proves
\eqref{Qbetagamma>0}.
\end{proof}

\section{Applications to Curvature and Hessian Estimates}\label{applications}
In this section, we apply Theorem~\ref{thm-crucial-ineq} to derive global curvature
estimates for prescribed curvature equations and the corresponding
global-to-boundary estimates for Hessian equations.

\begin{proof}[Proof of Theorem~\ref{thm:curvature-application}.]

Let
\begin{eqnarray*}
    u:=\langle X,\nu\rangle
\end{eqnarray*}
be the support function. Since $M$ is star-shaped, we have $C_0>u>c_0>0$.

Consider the test function
\begin{eqnarray*}
    G:=\log\kappa_{\max}-N\log u,
\end{eqnarray*}
where the large constant \(N>1\) will be chosen later. Suppose that \(G\) attains its
maximum at \(X_0\). Choose an orthonormal frame at \(X_0\) such that \(h=(h_{ij})\) is
diagonal, and write
\begin{eqnarray*}
    a:=\kappa_1=\cdots=\kappa_m
    >\kappa_{m+1}\geq\cdots\geq\kappa_n.
\end{eqnarray*}
When \(m>1\), as in Remark \ref{rmk-multiple}, we have $ \xi_p=h_{pp;1}=0$ for $2\leq p\leq m$. Thus Corollary~\ref{cor:multiple-largest-eigenvalues} applies. For simplicity, the
following calculation is written for \(m=1\); when \(m>1\), the sum
over \(p\) begins at \(p=m+1\). If $a$ is bounded, there is nothing to prove. We may therefore assume that $a>1$ is sufficiently large.

At \(X_0\), we have
\begin{eqnarray}\label{eq:curvature-critical-point}
    0
    =
    G_i
    =
    \frac{h_{11;i}}{a}
    -N\frac{u_i}{u}.
\end{eqnarray}
The second derivative formula for the largest eigenvalue gives
\begin{eqnarray}
    0
    &\geq&
    \sum_{i=1}^nF^{ii}G_{ii}
    \nonumber\\
    &\geq&
    \frac{1}{a}\sum_{i=1}^nF^{ii}h_{11;ii}
    +\frac{2}{a}
    \sum_{i=1}^n\sum_{p=2}^n
    \frac{F^{ii}h_{1p;i}^2}{a-\kappa_p}
    -\frac{1}{a^2}
    \sum_{i=1}^nF^{ii}h_{11;i}^2
    \nonumber\\
    &&
    -\frac{N}{u}\sum_{i=1}^nF^{ii}u_{ii}
    +\frac{N}{u^2}\sum_{i=1}^nF^{ii}u_i^2.
    \label{eq:curvature-second-maximum}
\end{eqnarray}

We next differentiate the curvature equation. Its first derivative is
\begin{eqnarray}\label{eq:curvature-equation-first}
    \sum_{i=1}^nF^{ii}h_{ii;l}
    =
    d_Xf(e_l)+\kappa_l d_\nu f(e_l),
\end{eqnarray}
while a second differentiation in the \(e_1\)-direction gives
\begin{eqnarray*}
    \sum_{i=1}^nF^{ii}h_{ii;11}
    +
    \sum_{p,q,r,s=1}^nF^{pq,rs}h_{pq;1}h_{rs;1}
    \geq
    \sum_{l=1}^n h_{11;l}d_\nu f(e_l)
    -C(1+a^2),
\end{eqnarray*}
and therefore
\begin{eqnarray}\label{eq:curvature-equation-second}
\sum_{i=1}^nF^{ii}h_{ii;11}\geq -\sum_{i,j=1}^nF^{ii,jj}h_{ii;1}h_{jj;1}+2\sum_{p=2}^n\frac{F^{pp}-F^{11}}{a-\kappa_p}h_{11;p}^2+\sum_{l=1}^n h_{11;l}d_\nu f(e_l)
    -C(1+a^2).   
\end{eqnarray}
By \eqref{eq:curvature-critical-point},
\eqref{eq:curvature-equation-first}, and
\(u_l=\kappa_l\langle X,e_l\rangle\), the terms containing
\(d_\nu f\) cancel:
\begin{eqnarray*}
    \frac{1}{a}
    \sum_{l=1}^n h_{11;l}d_\nu f(e_l)
    -
    \frac{N}{u}
    \sum_{l=1}^n
    \langle X,e_l\rangle
    \sum_{i=1}^nF^{ii}h_{ii;l}=
    -\frac{N}{u}
    \sum_{l=1}^n
    d_Xf(e_l)\langle X,e_l\rangle
    \geq-CN.
\end{eqnarray*}

Substituting these identities into
\eqref{eq:curvature-second-maximum}, using
\begin{eqnarray*}
    h_{11;ii}
    =
    h_{ii;11}
    +a\kappa_i(a-\kappa_i),
\end{eqnarray*}
and collecting the third-order terms, we obtain
\begin{eqnarray}\label{eq:curvature-key-inequality}
    0
    &\geq&
    \mathcal T_\gamma+(\gamma-1)\frac{F^{11}}{a^2}h_{11;1}^2+\sum_{p=2}^n\frac{a+\kappa_p}{a^2(a-\kappa_p)}F^{pp}h_{11;p}^2\nonumber\\
    &&+(N-1)\sum_{i=1}^nF^{ii}\kappa_i^2
    +\frac{N}{u^2}\sum_{i=1}^nF^{ii}u_i^2
    -C(1+N+a),
\end{eqnarray}
where
\begin{eqnarray}\label{eq:curvature-third-order-block}
    \mathcal T_{\gamma}
    :=
    -\frac{1}{a}\sum_{i,j=1}^nF^{ii,jj}\xi_i\xi_j
    -\gamma\frac{F^{11}}{a^2}\xi_1^2
    +\frac{2}{a}\sum_{p=2}^n
    \frac{F^{pp}}{a-\kappa_p}\xi_p^2,\qquad \xi_i:=h_{ii;1}.
\end{eqnarray}
The constant \(\gamma\in(0,1]\) will be chosen below. We have
also discarded the nonnegative terms involving \(h_{ij;k}^2\) for
which \(i,j,k\) are pairwise distinct.

For the terms involving \(h_{11;p}^2\), \eqref{n-k/k} and \(k\geq n/2\) give \(a+\kappa_p\geq0\); hence these terms may also be discarded.

We next estimate \(\mathcal T_\gamma\). By the definition of
\(\mathcal Q_\gamma\),
\begin{eqnarray}\label{eq:T-and-Q1}
    \mathcal T_\gamma
    =
    \mathcal Q_\gamma(\kappa;\xi)
    -
    \frac{2}{aF}
    \left(
        \sum_{i=1}^nF^{ii}\xi_i
    \right)^2.
\end{eqnarray}
Taking \(l=1\) in \eqref{eq:curvature-equation-first}, we have
\begin{eqnarray*}
    \sum_{i=1}^nF^{ii}\xi_i
    =
    d_Xf(e_1)+a\,d_\nu f(e_1).
\end{eqnarray*}
It follows from Theorem \ref{thm-crucial-ineq} that
\begin{eqnarray}\label{eq:curvature-equation-error}
    \mathcal T_\gamma
    \geq
    -C(1+a).
\end{eqnarray}

When \(k>n/2\), choose \(\gamma=1\). It follows that the first line of
\eqref{eq:curvature-key-inequality} is bounded below by
\(-C(1+a)\).

When \(k=n/2\), fix \(N\) and choose
\begin{eqnarray*}
    \gamma:=1-\varepsilon,
    \qquad
    0<\varepsilon<\frac{1}{2N}.
\end{eqnarray*}
By the critical equation \eqref{eq:curvature-critical-point}, we have
\begin{eqnarray*}
    -\varepsilon\frac{F^{11}}{a^2}h_{11;1}^2
    +\frac{N}{u^2}\sum_{i=1}^nF^{ii}u_i^2
    \geq
    \left(
        N-\varepsilon N^2
    \right)
    \frac{F^{11}u_1^2}{u^2}
    \geq0.
\end{eqnarray*}

Thus, in either case, \eqref{eq:curvature-key-inequality} gives
\begin{eqnarray}\label{eq:curvature-final-estimate}
    0
    \geq
    (N-1)\sum_{i=1}^nF^{ii}\kappa_i^2
    -C(1+N+a).
\end{eqnarray}
Finally, the Newton--Maclaurin inequalities imply
\begin{eqnarray*}
    \sum_{i=1}^nF^{ii}\kappa_i^2
    \geq
    ca.
\end{eqnarray*}
Choosing \(N\) sufficiently large, we deduce that 
\begin{eqnarray*}
    a\leq C.
\end{eqnarray*}
\end{proof}

We also have the following global-to-boundary estimates for $k$ Hessian equations:
\begin{theorem}\label{thm:euclidean-hessian-application}
Let \(n\geq3\) and \(n/2\leq k<n\). Suppose that
\(\Omega\subset\mathbb R^n\) is a bounded domain and that
\(u\in C^4(\Omega)\cap C^2(\overline\Omega)\) is a
\(k\)-admissible solution of
\begin{eqnarray}\label{eq:euclidean-hessian-application}
    \sigma_k(D^2u)
    =
    f(x,u,Du)>0,
    \qquad x\in\Omega,
\end{eqnarray}
for some positive function
\(f(x,z,p)\in C^2\). Then there exists
a constant \(C\), depending only on
\(n\), \(k\), \(\Omega\), \(\|u\|_{C^1(\overline\Omega)}\),
\(\inf f\), and \(\|f\|_{C^2}\), such that
\begin{eqnarray}\label{eq:euclidean-global-C2-application}
    \max_{x\in\overline\Omega}|D^2u(x)|
    \leq
    C\left(
        1+\max_{x\in\partial\Omega}|D^2u(x)|
    \right).
\end{eqnarray}
\end{theorem}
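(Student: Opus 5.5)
The plan is to mirror the proof of Theorem~\ref{thm:curvature-application}, with the support-function term replaced by a convex gradient term. Consider
\begin{eqnarray*}
    G:=\log\lambda_{\max}(D^2u)+\frac{N}{2}|Du|^2,
\end{eqnarray*}
with $N$ large, possibly adding $\frac{A}{2}|x|^2$ if extra positivity is needed. If the maximum of $G$ over $\overline\Omega$ is attained on $\partial\Omega$, then \eqref{eq:euclidean-global-C2-application} follows at once, since $|Du|$ is controlled and $|D^2u|\leq C\lambda_{\max}$ for $k$-admissible $u$ by \eqref{n-k/k}. So assume the maximum is attained at an interior point $x_0$. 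Rotate coordinates so that $D^2u(x_0)=\operatorname{diag}(\lambda_1,\ldots,\lambda_n)$ with $a=\lambda_1=\cdots=\lambda_m>\lambda_{m+1}\ge\cdots$, and assume $a$ is large. Remark~\ref{rmk-multiple} gives $u_{\alpha\alpha1}=0$ for $2\le\alpha\le m$, so Corollary~\ref{cor:multiple-largest-eigenvalues} applies and we may write the computation as if $m=1$.

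At $x_0$ the critical condition is $u_{11i}/a=-Nu_iu_{ii}=-N\lambda_iu_i$. Differentiating the equation once gives $\sum F^{ii}u_{iil}=f_{x_l}+f_zu_l+f_{p_l}\lambda_l$. Differentiating twice in $e_1$ gives
\begin{eqnarray*}
    \sum_i F^{ii}u_{ii11}\ge-\sum_{i,j} F^{ii,jj}u_{ii1}u_{jj1}+2\sum_{p\ge2}\frac{F^{pp}-F^{11}}{a-\lambda_p}u_{11p}^2+\sum_l f_{p_l}u_{11l}-C(1+a^2).
\end{eqnarray*}
In $\sum F^{ii}G_{ii}\le0$ the term $\frac1a\sum_l f_{p_l}u_{11l}$ combines with $N\sum_l u_l\sum_iF^{ii}u_{iil}$ coming from $\frac N2|Du|^2$. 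By the critical equation, $\frac1a\sum_l f_{p_l}u_{11l}=-N\sum_l f_{p_l}\lambda_lu_l$, and this cancels the $f_{p_l}\lambda_l$ part of the other term. What remains is bounded below by $-CN$.

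The good term $N\sum F^{ii}\lambda_i^2$ then appears from $\frac N2|Du|^2$, and there is no $u_{ii}$-type error, since the operator is Euclidean: $u_{11ii}=u_{ii11}$ with no commutator terms. Next set $\xi_i=u_{ii1}$ and collect the third-order terms exactly as in \eqref{eq:curvature-key-inequality}. The result is $\mathcal T_\gamma+(\gamma-1)F^{11}a^{-2}u_{111}^2$ plus the terms $\frac{a+\lambda_p}{a^2(a-\lambda_p)}F^{pp}u_{11p}^2$, which are nonnegative for $k\ge n/2$ by \eqref{n-k/k}. By \eqref{eq:T-and-Q1}, $\mathcal T_\gamma=\mathcal Q_\gamma-\frac2{aF}(\sum F^{ii}\xi_i)^2$, and $\sum F^{ii}\xi_i=O(1+a)$ by the first derivative of the equation. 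Since $F=f\le C$ and $a$ is large, we have $F/a^k\le\eta_*$, so Theorem~\ref{thm-crucial-ineq} gives $\mathcal T_\gamma\ge -C(1+a)$.

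For $k>n/2$, take $\gamma=1$. For $k=n/2$, take $\gamma=1-\varepsilon$ and absorb the term $-\varepsilon F^{11}a^{-2}u_{111}^2=-\varepsilon N^2F^{11}a^{0}\lambda_1^2u_1^2\cdot a^{-2}a^2$. Concretely, the critical equation gives $u_{111}=-Na\lambda_1u_1$, so this term equals $-\varepsilon N^2F^{11}\lambda_1^2u_1^2$, and it is absorbed by $N\sum F^{ii}\lambda_i^2$ once $\varepsilon N|Du|^2\le 1/2$.

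I expect this absorption in the critical case $k=n/2$ to be the main obstacle. The original bad coefficient involves $\gamma$ and must be balanced against the positive Jacobian-type term, and here the gradient term produces $F^{11}\lambda_1^2$ rather than $F^{ii}u_i^2$ as in the geometric case. Still, choosing $\varepsilon<1/(2N\sup|Du|^2)$ should suffice.

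Everything then reduces to $0\ge (N-C)\sum F^{ii}\lambda_i^2-C(1+N+a)$. Since $\sum F^{ii}\lambda_i^2\ge F^{11}a^2\ge c\,a$ by Newton--Maclaurin (as in Theorem~\ref{thm:curvature-application}), choosing $N$ large gives $a\le C$ at $x_0$. Hence $G\le C$ everywhere, which yields \eqref{eq:euclidean-global-C2-application}.
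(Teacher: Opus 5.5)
Your proposal is correct and follows the route the paper indicates: the test function $\log\lambda_{\max}(D^2u)+\frac{N}{2}|Du|^2$ and the cancellation of the $f_{p}$-terms through the critical equation. You then apply Theorem~\ref{thm-crucial-ineq} with $\gamma=1$ for $k>n/2$ and with $\gamma=1-\varepsilon$ for $k=n/2$, absorbing the extra term into $NF^{11}a^2$, which is the Euclidean replacement for $\frac{N}{u^2}\sum_i F^{ii}u_i^2$ in the geometric proof; the paper omits these details, and your write-up supplies them correctly.
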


\begin{proof}
Consider
\begin{eqnarray*}
    G
    :=
    \log\lambda_{\max}(D^2u)
    +\frac{A}{2}|Du|^2.
\end{eqnarray*}
The proof is very similar to Theorem~\ref{thm:curvature-application}, so we omit it.
\end{proof}

\section*{Acknowledgments}
The author would like to thank Professor Guohuan Qiu for helpful
discussions on this problem and Professor Xi-Nan Ma for his
longstanding support. The author acknowledges support from Grant 2025YFA1017603 of the National Key R\&D Program of China.

\section*{AI Statement}

During the preparation of this manuscript, the author used ChatGPT
(OpenAI) to assist with language editing and the presentation of the
material. The author carefully reviewed and independently verified all
AI-assisted output and takes full responsibility for the content of
the manuscript.

\printbibliography

\end{document}